\title[SYZ mirror symmetry for some non-compact CY surfaces]{Recent progress on SYZ mirror symmetry for some non-compact Calabi-Yau surfaces}
\author[T. C. Collins]{Tristan C. Collins}
 \email{tristanc@mit.edu}
  \address{Department of Mathematics, Massachusetts Institute of Technology, 77 Massachusetts Avenue, Cambridge, MA 02139}
 \thanks{T.C.C is supported in part by NSF CAREER grant DMS-1944952 and an Alfred P. Sloan Fellowship. }
  \author[Y.-S. Lin] {Yu-Shen Lin}
   \email{yslin@bu.edu}
  \address{Department of Mathematics, Boston University, 11 Cummington Mall, Boston, MA 02215}
    \thanks{Y.-S. L. is supported in by Simons collaboration grant \# 635846 and NSF grant DMS-2204109.}
\theoremstyle{plain}
\newtheorem{thm}{Theorem}[section]
\newtheorem{prop}[thm]{Proposition}
\newtheorem{defn}[thm]{Definition}
\newtheorem{cor}[thm]{Corollary}
\newtheorem{conj}[thm]{Conjecture}
\newtheorem{que}[thm]{Question}
\theoremstyle{definition}
\newtheorem{rk}[thm]{Remark}
\numberwithin{equation}{section}
\newcommand{\del}{\partial}
\newcommand{\dbar}{\overline{\del}}
\newcommand{\ddb}{\sqrt{-1}\del\dbar}
\newcommand{\be}{\begin{equation}}
\newcommand{\bea}{\begin{eqnarray}}
\newcommand{\eea}{\end{eqnarray}}
 \newcommand{\ee}{\end{equation}}
\renewcommand{\leq}{\leqslant}
\renewcommand{\geq}{\geqslant}
\renewcommand{\epsilon}{\varepsilon}
\renewcommand{\phi}{\varphi}
\begin{document}

\begin{abstract}
We survey the authors recent works, joint with A. Jacob, on Strominger-Yau-Zaslow mirror symmetry for rational elliptic surfaces and del Pezzo surfaces.  We discuss some applications, including the Torelli theorem for $ALH^*$ gravitational instantons and explain how our results can be used to prove that all $ALH^*$ gravitational instantons can be compactified to a weak del Pezzo surface, recovering a recent result of Hein-Sun-Viaclovsky-Zhang.
	 \end{abstract}
\maketitle
\section{Introduction}
The authors and A. Jacob recently undertook a detailed study of aspects of the Strominger-Yau-Zaslow mirror symmetry conjecture for certain log Calabi-Yau surfaces arising from del Pezzo surfaces and rational elliptic surfaces \cite{CJL,CJL2,CJL3}.  The goal of this survey is to recount and summarize some of these results and to discuss some applications and further questions.  

Let us begin by recalling a general formulation of the SYZ mirror symmetry conjecture: 

\begin{conj}[Strominger-Yau-Zaslow]\label{conj: introSYZ}
Let $(\check{X},\check{\omega})$ be a Calabi-Yau manifold, and $\check{\mathcal{M}}_{cplx}$ denote the moduli space of complex structures on $\check{X}$.  Then, for a complex structure $\check{J}\in \check{\mathcal{M}}_{cplx}$ sufficiently close to a large complex structure limit, the following is true:
\begin{enumerate}
\item $(\check{X},\check{J},\check{\omega})$ admits a special Lagrangian torus fibration $\check{\pi}:\check{X}\rightarrow \check{B}$ onto a base $\check{B}$ equipped with an integral affine structure.
\item There is another Calabi-Yau manifold $(X, J, \omega)$ with a special Lagrangian fibration $\pi: X\rightarrow B$ and $B$ is equipped with an integral affine structure.
\item Let $\mathcal{M}_{\text{K\"ah}}$ denote the complexified K\"ahler moduli space of $X$.  There is a mirror map $q: \check{\mathcal{M}}_{cplx}\rightarrow \mathcal{M}_{\text{K\"ah}}$ which is a local diffeomorphism such that ${\rm Im}(q(\check{J})) = \omega$.
\item There is an isomorphism $\phi: \check{B} \rightarrow B$ exchanging the complex and symplectic affine structures, and such that the Riemannian volumes of the special Lagrangian torus fibers over $\check{b} \in \check{B}, \phi(\check{b})\in B$ are inverse to one another.
\end{enumerate}
\end{conj}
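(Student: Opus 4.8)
The plan is to specialize to the setting at hand, where $\check X$ is a non-compact hyperk\"ahler surface, since in complex dimension two the hyperk\"ahler structure converts the analytically forbidding problem of producing special Lagrangian fibrations into a tractable one. Recall that a hyperk\"ahler surface carries a family of complex structures $I,J,K$ with K\"ahler forms $\omega_I,\omega_J,\omega_K$, and that $\Omega_I := \omega_J + \sqrt{-1}\,\omega_K$ is a nowhere-vanishing holomorphic volume form for $I$. The key observation is that a fibration whose fibers are special Lagrangian with respect to $(I,\omega_I,\Omega_I)$---so that $\omega_I$ and $\mathrm{Im}\,\Omega_I = \omega_K$ both restrict to zero on the fibers---is precisely a holomorphic elliptic fibration for the rotated complex structure $J$, whose holomorphic volume form is $\Omega_J = \omega_K + \sqrt{-1}\,\omega_I$. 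Thus item (1) reduces to two tasks: producing a complete Ricci-flat K\"ahler metric on $\check X$ with the prescribed $ALH^*$ asymptotics, and exhibiting a holomorphic elliptic fibration for the rotated structure. The former I would obtain from a Tian--Yau type construction on the complement of the anticanonical divisor; the latter from the elliptic fibration on the underlying rational elliptic surface, whose fibers become special Lagrangian tori after the rotation. The integral affine structure on $\check B$ is then read off from the period and action coordinates of this fibration.

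Granting the special Lagrangian fibration $\check\pi:\check X \to \check B$, the next steps build the mirror and the maps. For item (2) I would construct $X$ by fiberwise dualizing the torus fibration (T-duality) over the smooth locus $\check B_0 \subset \check B$, equipping the dual total space with the semiflat complex structure determined by the symplectic and affine data of $\check X$; this is routine over $\check B_0$, and identifies $B_0$ with $\check B_0$ as sets. The content of items (3) and (4) is to show that this semiflat mirror, once corrected, globalizes and that the two affine structures are genuinely interchanged. The complex affine structure on $\check B$ is recorded by the periods $\int_\gamma \Omega_I$ over vanishing cycles, while the symplectic affine structure on $B$ is recorded by the symplectic areas $\int_\gamma \omega$; because the decomposition $\Omega_I = \omega_J + \sqrt{-1}\,\omega_K$ swaps the real and imaginary roles under hyperk\"ahler rotation, the identification $\phi$ is tautological at the semiflat level, and the reciprocal-volume relation follows from the fact that the Hessian potentials generating the two base metrics are Legendre dual (so that the fiber metrics, and hence fiber volumes, are mutually inverse). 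The mirror map $q$ is then pinned down by matching the holomorphic periods of $\check X$ against the complexified K\"ahler periods of $X$.

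The main obstacle, and the technical heart of the program, is the behavior at the singular fibers. The Kodaira $I_n$ fibers of the holomorphic elliptic fibration become focus--focus singularities of the special Lagrangian fibration, around which the integral affine structure on the base acquires nontrivial monodromy and isolated singular points. Two things must be controlled. First, one must prove that the special Lagrangian fibration actually extends across these singular fibers with the expected local model---this requires genuine analysis of the Ricci-flat metric near the divisor and near the singular fibers, not merely the semiflat approximation. Second, and more seriously, the naive semiflat complex structure on the mirror fails to extend across the walls emanating from the singular points and must be corrected by a scattering/wall-crossing procedure governed by open Gromov--Witten (disk-counting) invariants, in the spirit of Gross--Siebert and Gross--Hacking--Keel. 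Proving that these corrections converge, that the corrected complex structure on $X$ is honestly integrable with the correct asymptotics, and that its periods reproduce the mirror map of item (3), is where the real work lies; in the non-compact surface setting this is precisely where the deformation-theoretic and analytic estimates on the instanton corrections become delicate, and I would expect this step to consume the bulk of the argument.
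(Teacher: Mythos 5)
Your opening move---that in complex dimension two hyperk\"ahler rotation converts special Lagrangian fibrations into elliptic fibrations---is indeed the engine of the paper's proof of this conjecture for del Pezzo and rational elliptic surface complements. But you assume away the hard part of item (1): you say the elliptic fibration for the rotated structure comes ``from the elliptic fibration on the underlying rational elliptic surface,'' yet for $X=Y\setminus D$ with $Y$ del Pezzo there is no such surface a priori. Proving that a hyperk\"ahler rotation of the Tian--Yau space compactifies to a rational elliptic surface is Theorem~\ref{CJL main thm}, and the paper gets there by lifting special Lagrangians from $D$ via the Calabi ansatz, running Lagrangian mean curvature flow in a space of unbounded geometry (Theorem~\ref{LMCF conv}), using Floer cohomology and positivity of intersections to show the flow preserves the fibration structure, compactifying the rotated germ at infinity via the $j$-invariant and $\overline{\mathcal{M}}_{1,1}$, and then invoking the Enriques--Kodaira classification; none of this is supplied by your sketch. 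A second gap: item (3) is not even well-posed in your framework, because there is no general uniqueness theorem for complete Ricci-flat metrics, so $\mathcal{M}_{\text{K\"ah}}$ must be \emph{defined}; the paper does this via the uniqueness theorems (Theorems~\ref{uniqueness thm} and~\ref{uniqueness modulo aut}) and the moduli space $\check{\mathcal{K}}_{CY}$. Relatedly, the rotated Tian--Yau metric is \emph{not} one of Hein's metrics but is asymptotic to a non-standard semi-flat metric (Section 4), so an identification of the mirror through the standard semi-flat asymptotics would fail. A smaller but real slip: on the rational elliptic surface side the special Lagrangian fibers lie in the class of the ``bad cycle'' $[C]$, not the elliptic fiber class, so the special Lagrangian fibration there is not the given elliptic fibration viewed after rotation.

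Your route to items (2)--(4)---fiberwise T-duality over the smooth locus followed by scattering/wall-crossing corrections governed by disk counts---is a genuinely different program from the paper's, and you correctly flag that the convergence of those corrections is ``where the real work lies''; the paper avoids this entirely. Its mirror is an honest geometric space (a rational elliptic surface complement equipped with one of the generalized Hein metrics of Theorem~\ref{generalized Hein's metric}), the mirror map is pinned down by matching periods through the Gross--Hacking--Keel Torelli theorem for Looijenga pairs (Theorem~\ref{Torelli Looijenga}), and the exchange of affine structures and reciprocity of fiber volumes in item (4) follow because the two sides are connected by two hyperk\"ahler rotations whose holomorphic volume forms differ by a phase of $\pi/2$ and an explicit scaling (Theorem~\ref{SYZMS}). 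What your approach would buy, if carried out, is a construction intrinsic to one side of the mirror pair and potentially applicable beyond the hyperk\"ahler setting; what the paper's approach buys is an unconditional theorem, at the price of being special to complex dimension two.
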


This formulation of the SYZ conjecture is perhaps the most optimistic and ambitious version of the conjecture.  Notice that if $X$ is a compact Calabi-Yau of dimension $n$, then the mirror $\check{X}$ is also a compact Calabi-Yau of dimension $n$, and the third point implies the exchange of Hodge numbers
\[
h^{n-1,1}(\check{X}) = h^{1,1}(X).
\]
An important philosophical point for our purposes is that the space $\mathcal{M}_{\text{K\"ah}}$ should be viewed as the (complexified) symplectic moduli space of Calabi-Yau structures on $X$, while  $\check{\mathcal{M}}_{cplx}$ should be viewed as the complex moduli space of Calabi-Yau structures on $\check{X}$.  This recasting of the moduli spaces in terms of Calabi-Yau structures is thanks to Yau's solution of the Calabi conjecture \cite{Y} and the Bogomolov-Tian-Todorov theorem \cite{Bog, Tian, Tod}.   Finally, we remark that work of Hitchin \cite{Hit} shows that the integral affine structure on the base of the special Lagrangian torus fibrations in Conjecture~\ref{conj: introSYZ} is inherited from the complex/symplectic geometry of $X,\check{X}$ and that point (4) of Conjecture~\ref{conj: introSYZ} can be viewed as a consequence of the central principle of the SYZ conjecture: mirror symmetry is $T$-duality.

The authors and A. Jacob recently proved Conjecture~\ref{conj: introSYZ} for certain complete, non-compact Calabi-Yau manifolds \cite{CJL,CJL2}.  Let us give an informal overview of these results.  Let us first introduce the objects relevant for this article:
\begin{itemize}
\item  Let $Y$ be a del Pezzo surface with $K_{Y}^2=d$ where $0 < d \leq 9$, and let $D\in |-K_{Y}|$ be a smooth elliptic curve.  Then a foundational result of Tian-Yau \cite{TY} says that $X=Y\setminus D$ admits an exact, complete, K\"ahler Ricci-flat metric $\omega_{TY} = \ddb \phi_{TY}$ asymptotic to the {\bf Calabi ansatz}.  
\item Let $\check{Y}\rightarrow \mathbb{P}^1$ be a rational elliptic surface (see Definition~\ref{defn: RES}), and suppose $\check{D} \in |-K_{Y}|$ is a singular fiber of Kodaira type $I_{d}$.  Then a result of Hein \cite{Hein} says that $\check{X}= \check{Y}\setminus \check{D}$ admits many complete, K\"ahler, Ricci-flat metrics asymptotic to {\bf semi-flat Calabi-Yau metrics}.
\end{itemize}

In order to prove Conjecture~\ref{conj: introSYZ} one needs to address several geometric questions:
\begin{enumerate}
\item Do the non-compact Calabi-Yau spaces $X, \check{X}$ admit special Lagrangian fibrations?
\item There is no general uniqueness theorem for complete Calabi-Yau metrics, so what is meant by the K\"aher moduli space $\mathcal{M}_{\text{K\"ah}}$?  Can one construct a finite dimensional moduli space parametrizing complete, K\"ahler, Ricci-flat metrics?
\item Assuming we can define $\mathcal{M}_{\text{K\"ah}}$, how do we define the mirror map from the complex moduli of $X$ to the K\"ahler moduli of $\check{X}$?
\end{enumerate}

Roughly speaking, \cite{CJL} addresses question $(1)$, while \cite{CJL2} addresses questions $(2)$ and $(3)$.  In this survey we will recount these results, explain some applications, and highlight some open questions.

\section{Special Lagrangian fibrations in del Pezzo Surfaces} \label{sec: SYZ in dP}
In this section we will describe the construction of special Lagrangian fibrations in del Pezzo surfaces, following the arguments of the authors and A. Jacob in \cite{CJL}.
 
  \subsection{The Calabi ansatz} \label{sec: Calabi ansatz}
    Let $D$ be a projective Calabi-Yau manifold of dimension $(n-1)$ with $K_{D} \simeq \mathcal{O}_{D}$ and let $p: E\rightarrow D$ be a positive line bundle. Denote by $Y_{\mathcal{C}}$ a tubular neighborhood of the zero section in the total space of $E$ and by $X_{\mathcal{C}} := Y_{\mathcal{C}}\setminus\{0\}$ the complement of the zero section in $Y_{\mathcal{C}}$. Let $\Omega_{D}$ denote the holomorphic volume form on $D$. Then $X_{\mathcal{C}}$ admits a holomorphic volume form given by
    	  \begin{align*}
    	    \Omega_{\mathcal{C}}=p^*\Omega_{D}\wedge \frac{dw}{w},
    	  \end{align*} 
	  where $w$ is a coordinate along the fibre of $E$. Let $\omega_D$ be the unique Ricci-flat metric on $D$ such that $[\omega_D]=2\pi c_1(E)$, and normalize $\Omega_{D}$ so that
	  \[
	  \omega_{D}^{n-1} = \frac{1}{2} (\sqrt{-1})^{(n-1)^2}\Omega_{D}\wedge \overline{\Omega}_{D}.
	  \]
	   and let $h$ be a hermitian metric on $E$ such that $\omega_D=-\sqrt{-1}\partial \bar{\partial}\log h$. Consider the K\"ahler form
    	   \begin{align*}
    	     \omega_{\mathcal{C}}=\sqrt{-1}\partial \bar{\partial}\frac{n}{n+1}\big(-\log{|\xi|^2_h}\big)^{\frac{n+1}{n}}
    	   \end{align*} 
	   where $\xi \in E$.  By direct calculation we have
	   \[
	   \omega_{\mathcal{C}}^n = \frac{1}{2}(\sqrt{-1})^{n^2} \Omega_{\mathcal{C}} \wedge \overline{\Omega}_{\mathcal{C}}
	   \]
	    Geometrically, if we let $r(x)$ denote the distance from $x$ to a fixed point $x_0\in X_{\mathcal{C}}$, then a direct calculation shows that 
    	     \begin{equation}\label{eq: geomTY}
    	         |\nabla^k Rm|\leq C_k r^{-\frac{k+2}{n+1}}, \qquad     {\rm inj}(x) \sim r^{\frac{1-n}{n+1}}, \qquad {\rm Vol}(B_{r}(x_0)) \sim r^{\frac{2n}{n+1}}.
    	     \end{equation}
	     In other words, the Riemannian curvature decays at a polynomial rate, but the injectivity radius degenerates at infinity. Thus $X_{\mathcal{C}}$ is not of ``bounded geometry".  Furthermore, it is easy to check that the metric is complete as $|\xi|\rightarrow 0$ but incomplete as $|\xi|\rightarrow 1$. 
    	     
    In their foundational work \cite{TY}, Tian-Yau used the Calabi ansatz $(X_{\mathcal{C}},\omega_{\mathcal{C}},\Omega_{\mathcal{C}})$ to construct complete, non-compact K\"ahler, Ricci-flat manifolds.  Precisely,  given a Fano manifold $Y$ of dimension $n$ and $D$ a smooth anti-canonical divisor, the complement $X=Y\setminus D$ admits a holomorphic volume form $\Omega$ with a simple pole on $D$, and $\Omega$ is unique up to $\mathbb{C}^*$-scaling.  Clearly a neighborhood of $D\subset Y$ and be smoothly identified with a neighborhood of the zero section in the normal bundle to $D$.  Tian-Yau developed a robust package for solving the complex Monge-Amp\`ere equation on non-compact complex manifolds and applied their techniques to construct complete Ricci-flat metrics $\omega_{TY}$ on $X=Y\setminus D$ asymptotic to the Calabi ansatz.   Hein \cite{Hein} later improved and sharpened the Tian-Yau method, obtaining the result that in a Tian-Yau space the pair $(\omega_{TY},\Omega)$ converges to $((\Phi^{-1})^*\omega_{\mathcal{C}},(\Phi^{-1})^*\Omega_{\mathcal{C}})$ with exponential decay for a suitable map $\Phi:X_{\mathcal{C}}\rightarrow X$ which is a diffeomorphism onto is image \cite{HSVZ}. Therefore, we expect to be able to transfer geometric properties of $X_{\mathcal{C}}$ to a neighborhood of infinity in $X$. 
     
   \subsection{Special Lagrangians in Tian-Yau spaces}
    
    We now explain how to produce special Lagrangians in Tian-Yau spaces.   The notion of a special Lagrangian was introduced by Harvey-Lawson \cite{HL} in their foundational work on calibrated geometries. Let $X$ be a Calabi-Yau manifold with holomorphic volume form $\Omega$ and Ricci-flat metric $\omega$. A half dimensional submanifold $L$ is a {\em Lagrangian} if $\omega|_L=0$. From a local coordinate calculation, one has $\Omega|_L=e^{i\theta}\mbox{vol}_L$, where $\theta:L\rightarrow S^1$ is called the {\em phase function} of $L$.  A Lagrangian is {\em graded} if $\theta:L\rightarrow S^1$ can be lifted to a map $\theta: L\rightarrow \mathbb{R}$.  $L$ is said to be special Lagrangian if $\theta$ is a constant. As calibrated submanifolds, special Lagrangians are volume minimizing in their homology class and they play a central role geometry and physics. Typical examples of of special Lagrangians are obtained from
    \begin{itemize}
    \item  the real locus of anti-holomorphic anti-symplectic involutions,
    \item  hyperK\"ahler rotation of holomorphic Lagrangians in hyperK\"ahler manifolds \cite{HL},
    \item  deformations of known special Lagrangians \cite{McLean},
    \item  lifting of special Lagrangians in symplectic reductions \cite{G00}.
    \end{itemize}

     Let us now return to the setting of the Calabi ansatz.   Suppose we are given any special Lagrangian $L\subseteq D$ with respect to $\omega_D,\Omega_D$ of phase $\theta$, i.e. $\Omega_D|_L=e^{i\theta}\mbox{vol}_L$, we may associate a submanifold 
      \begin{align*}
        L_{\epsilon}=p^{-1}(L)\cap \{|\xi|_h=\epsilon\}\subseteq X,
      \end{align*} 
      which is again a special Lagrangian with respect to $\omega_{\mathcal{C}},\Omega_{\mathcal{C}}$ of phase $\theta+\frac{\pi}{2}$ by direct calculation. In particular, when $D$ is an elliptic curve or $K3$ surface with a special Lagrangian fibration, \footnote{For instance, any algebraic $K3$ surface admits a special Lagrangian fibration \cite{CJL}} then $X_{\mathcal{C}}$ admits a special Lagrangian fibration as well. 
      
      Using the fact that the Tian-Yau space is asymptotic to the Calabi ansatz \cite{HSVZ}, one may run Moser's trick on $\Phi(L_{\epsilon})$ to produce a Lagrangian submanifold $M_{\epsilon}$ of $X$ near infinity whose phase function is of order $O(e^{-C/\epsilon})$ for some positive constant $C$. In particular, $M_{\epsilon}$ is a graded Lagrangian.   The idea is to perturb $M_{\epsilon}$ to a genuine special Lagrangian in the Tian-Yau space.  One approach to this perturbation problem is to use the Lagrangian mean curvature flow to deform $M_{\epsilon}$, as we will explain below.
      
  In general, if $M$ is a graded Lagrangian in a Calabi-Yau manifold then the mean curvature is given by $\vec{H}=J\nabla \theta$, where $J$ is the almost complex structure, and $\nabla \theta$ is the Riemannian gradient of the phase function. The mean curvature flow (MCF) produces a time-dependent family of immersions $F_t: M \subseteq X$ satisfying the equation 
         \begin{align*}
             \frac{\partial }{\partial t}F_t =\vec{H}. 
          \end{align*}
     Smoczyk showed that the Lagrangian condition is preserved by the MCF in K\"ahler-Einstein manifolds \cite{Smo}, thus in this case it is called Lagrangian mean curvature flow (LMCF). If one can prove the smooth convergence of the flow, then the limit of LMCF has $\nabla \theta=0$ and is thus a special Lagrangian. However, examples of Neves show that the LMCF of an arbitrary small $C^0$ deformation of a special Lagrangian can develop finite time singularities \cite{Ne}. Worse still, in the setting of the Tian-Yau space the geometry is not bounded due to the degeneration of the injectivity radius, and hence many of the standard tools in the literature do not apply. Nevertheless, the authors obtained the following theorem:    
    \begin{thm}\cite[Theorem 1.1]{CJL} \label{LMCF conv}
    	For $\epsilon\ll 1$, the LMCF of $M_{\epsilon}$ converges smoothly to a smooth special Lagrangian submanifold. 
    \end{thm}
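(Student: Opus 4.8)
The plan is to realize the LMCF of $M_\epsilon$ as a scalar quasilinear parabolic flow over a genuine model special Lagrangian, and to control it by playing the exponentially small phase against the merely polynomial degeneration of the ambient geometry. First I would fix the uncorrected model torus $T_\epsilon := \Phi(L_\epsilon)$, which in the Calabi ansatz $(X_{\mathcal{C}},\omega_{\mathcal{C}},\Omega_{\mathcal{C}})$ is an \emph{honest} special Lagrangian; since the Tian--Yau data $(\omega_{TY},\Omega)$ converge to the Calabi data with exponential rate, $T_\epsilon$ is Lagrangian and special up to errors of size $O(e^{-C/\epsilon})$, and Moser's trick corrects it to $M_\epsilon$ with phase of the same order. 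Using the Weinstein Lagrangian neighborhood theorem around $T_\epsilon$, I would write every nearby Hamiltonian-isotopic Lagrangian as the graph of $df$ for a function $f$ on $T_\epsilon$, and express LMCF, reparametrized by its normal part, as
\[
\partial_t f \;=\; \theta_{\mathrm{graph}}(df) \;=\; \Delta_{T_\epsilon} f + \mathcal{E} + Q(\nabla^2 f),
\]
where $\mathcal{E} = O(e^{-C/\epsilon})$ is the forcing produced by the error of the ambient metric and $Q$ collects the quadratically-higher-order terms. This reduces the theorem to long-time existence, confinement, and convergence for a single heat-type equation.

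Second, I would establish confinement and $C^0$ control. Since $D$ is an elliptic curve its induced metric is flat, $L\subset D$ is a closed geodesic, and near infinity the Calabi ansatz is approximately invariant under rotation of the $S^1$-fibre of $E$, with $T_\epsilon$ exactly invariant. I would use the radial function $-\log|\xi|_h^2$ and the fibre rotation as barriers: the maximum principle applied along the flow traps $M_\epsilon$ in a thin shell $\{|\xi|_h \in (\epsilon/2, 2\epsilon)\}$, keeping it away from both the incomplete end $|\xi|_h\to 1$ and, quantitatively, from the scale at which the injectivity radius has collapsed. In parallel, Smoczyk's theorem preserves the Lagrangian condition and forces the phase to obey $\partial_t \theta = \Delta_{g_t}\theta$, so $\osc_{M_t}\theta$ is non-increasing; being $O(e^{-C/\epsilon})$ at $t=0$ it stays exponentially small for all time, which is precisely what excludes the Neves-type finite-time singularities in this regime.

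The main obstacle is that the constants in the parabolic estimates (Schauder bounds, pseudolocality, White-type regularity) degenerate as $\epsilon\to 0$, because the injectivity radius collapses like $r^{(1-n)/(n+1)}$ at infinity, so no off-the-shelf MCF theorem applies. I would handle this by rescaling at the natural scale $\lambda_\epsilon$ set by the shrinking $S^1$-fibre: after dilating $g_{TY}\mapsto \lambda_\epsilon^{-2} g_{TY}$ the torus $T_\epsilon$ and a definite neighborhood acquire \emph{uniformly bounded geometry} (the collapsing circle has unit length, curvature and its derivatives are bounded independently of $\epsilon$), modeled on a flat product, so the interior parabolic estimates close with $\epsilon$-independent constants. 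The decisive point to verify is a competition of scales: rescaling multiplies the forcing $\mathcal{E}$ only by polynomial powers of $\lambda_\epsilon$, which is overwhelmed by the factor $e^{-C/\epsilon}$; hence the maximum principle together with a bootstrap keep $f_t$ and all its derivatives uniformly small for all $t$.

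Finally, with uniform a priori bounds in hand and the monotone decay of $\osc\theta$, I would extract smooth convergence $f_t\to f_\infty$ as $t\to\infty$. The limit satisfies $\Delta_{T_\epsilon} f_\infty + \mathcal{E} + Q = 0$ with $\nabla\theta\equiv 0$, hence $\theta\equiv\mathrm{const}$, so the limiting graph is a genuine smooth special Lagrangian, a perturbation of $T_\epsilon$ of size $O(e^{-C/\epsilon})$. Equivalently, under the approximate fibre symmetry the torus flow descends to a curve-shortening-type flow of $L$ on the flat elliptic curve $D$ toward its closed geodesic, which makes the convergence transparent and confirms the limit.
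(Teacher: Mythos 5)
Your proposal correctly isolates the engine of the argument: the phase error of $M_{\epsilon}$ is exponentially small while every geometric quantity that degenerates (curvature, injectivity radius, length of the collapsing fibre circle) does so only polynomially, and after rescaling to the scale of that circle the geometry near the torus is uniformly bounded. This competition of scales is exactly what drives the proof in \cite{CJL}. The implementation there is different from yours, however: rather than reducing LMCF to a scalar quasilinear equation for a potential in a Weinstein neighborhood, \cite{CJL} proves a quantitative refinement of H.~Li's convergence theorem for almost-calibrated LMCF \cite{Li}, making explicit how the smallness threshold for $\osc\,\theta$ that guarantees long-time existence and convergence depends (polynomially) on the ambient curvature bounds, the injectivity radius, and the second fundamental form of the initial Lagrangian; one then checks that $\osc\,\theta(M_{\epsilon})$ beats this threshold. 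The estimates are run directly on the tensors $\theta$ and $A$ via their evolution equations and integral/blow-up arguments, which avoids having to justify that the flow remains graphical for all time. If you pursue the scalar route, note that the graph must be taken over the genuine Lagrangian $M_{\epsilon}$ rather than over $T_{\epsilon}=\Phi(L_{\epsilon})$, which is not $\omega_{TY}$-Lagrangian, and the barrier computation confining the flow to the shell $\{|\xi|_h\in(\epsilon/2,2\epsilon)\}$ needs to be verified; in \cite{CJL} confinement comes instead from the displacement bound $\int_0^\infty\|H\|\,dt\ll 1$.

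The one step I would object to concretely is the claim that the monotone decay of $\osc\,\theta$ ``is precisely what excludes the Neves-type finite-time singularities.'' It is not: Neves's examples \cite{Ne} can be arranged to be almost calibrated with arbitrarily small oscillation of the Lagrangian angle, and since $\osc\,\theta$ is non-increasing they retain small oscillation up to the singular time. What rules them out is the smallness of $\osc\,\theta$ \emph{relative to} the second fundamental form and the local geometry, i.e.\ a uniform curvature estimate along the flow. That estimate is the heart of the theorem, and it is the part your proposal treats most lightly (``the maximum principle together with a bootstrap''); whichever formulation one adopts, this is where the dependence of every parabolic constant on $\epsilon$ must actually be tracked.
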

     In particular, this produces many new examples of compact special Lagrangian submanifolds in complete Calabi-Yau spaces.
    
    \subsection{Special Lagrangian fibrations in del Pezzo surfaces}
      In this section we restrict to the case $Y$ is a weak del Pezzo surface and explain the existence of special Lagrangian fibrations in $X$. From the classification of compact complex surfaces, a weak del Pezzo surface is either $\mathbb{P}^1\times \mathbb{P}^1$, the Hirzebruch surface $\mathbb{F}_2$ or the successive blow-up of $\mathbb{P}^2$ at $d\leq 8$ points with no point lying on a $(-2)$-curve.
      
      \subsubsection{Local fibration near infinity}
      When $Y$ is a weak del Pezzo surface, a smooth divisor $D \in |-K_{Y}|$ is necessarily an elliptic curve and hence admits a special Lagrangian fibration with fibre class $\gamma$ for any primitive $\gamma\in H_1(D,\mathbb{Z})$. From the discussion in the previous section, such a special Lagrangian fibration lifts to a special Lagrangian fibration in $X_{\mathcal{C}}$ and produces an almost special Lagrangian fibration near infinity in $X$. Moreover, by Theorem~\ref{LMCF conv} the LMCF flows every such fibre to a special Lagrangian torus.  A priori, it is not clear if the LMCF preserves the fibration structure-- for example, by considering a dumbbell, it is easy to see even the curve shortening flow can evolve disjoint, almost minimal submanifolds towards the same minimal limit. However, in our case we can exploit the symplectic structure; take two disjoint almost special Lagrangians $M_{\epsilon},M'_{\epsilon'}$ in $X$ and assume that they converge under the LMCF to special Lagrangians $\tilde{M}_{\epsilon}=\tilde{M}'_{\epsilon'}$.   Since LMCF preserves the Hamiltonian isotopy class of Lagrangians and the Floer cohomologies between Lagrangians are independent of the representatives in the Hamiltonian isotopy classes, we have 
        \begin{align*}
           0=HF(M_{\epsilon},M'_{\epsilon'})\cong HF(\tilde{M}_{\epsilon},\tilde{M}_{\epsilon'})\cong HF(\tilde{M}_{\epsilon})\cong H^*(T^2)\neq 0
        \end{align*} 
        which is a contradiction. \footnote{Here the ambient geometry is not of bounded geometry, so to define the Floer cohomology one needs a version of Sachs-Uhlenbeck-Gromov compactness theorem in this setting which is provided in \cite[Propostion 5.3]{CJL} or \cite{Gro}.} Here we use the fact that in a hyperK\"ahler surface Maslov index zero Lagrangians bound no $J$-holomorphic discs with respect to a generic almost complex structure for dimensional reasons and thus the Floer cohomology reduces to the usual cohomology.  This shows that $\tilde{M}_{\epsilon}\ne \tilde{M}'_{\epsilon'}$. On the other hand, after a suitable hyperK\"ahler rotation (as explained below), $\tilde{M}_{\epsilon}, \tilde{M}'_{\epsilon'}$ become holomorphic. In particular, they can have only finitely many intersections and each intersection contributes positively.  Since their topological intersection is zero, this shows that in fact $\tilde{M}_{\epsilon}, \tilde{M}'_{\epsilon'}$ are disjoint. Therefore, the LMCF flows the almost special Lagrangian fibration to a genuine special Lagrangian fibration near infinity. 
        
        \subsubsection{HyperK\"ahler rotation and compactification}
        Next we will explain the hyperK\"ahler rotation trick. Since $Sp(1)\cong SU(2)$, every Calabi-Yau surface is hyperK\"ahler. Given a Calabi-Yau surface $X$ with holomorphic volume form $\Omega$ and Ricci-flat metric $\omega$ such that $2\omega^2=\Omega\wedge \bar{\Omega}$, one can associate an $S^2$-family of complex structures and corresponding Ricci-flat K\"ahler forms on the same underlying Riemannian manifold $X$. For simplicity, we will single out an $S^1\subseteq S^2$ with coordinate $\theta$ where the corresponding K\"ahler form and holomorphic volume form are given by 
           \[
              \omega_{\theta}=\mbox{Re}(e^{-i\theta}\Omega), \qquad \Omega_{\theta}= \omega-i\mbox{Im}(e^{-i\theta}\Omega). 
           \]
           We let $X_{\theta}$ denote the space $X$ equipped with the K\"ahler form $\omega_{\theta}$ and holomorphic volume form $\Omega_{\theta}$. By the Wirtinger theorem,  $C\subseteq X$ is a holomorphic curve if and only if $C$ is special Lagrangian in the same underlying space but with the Calabi-Yau structure given by $X_{\theta}$.  In particular, for hyperK\"ahler surfaces there is a close connection between holomorphic cycles and special Lagrangian cycles. 
        
        Now, by Theorem~\ref{LMCF conv} and the above discussion we have constructed a special Lagrangian torus fibration near infinity in $X$, and this becomes a genus-one holomorphic fibration after a suitable hyperK\"ahler rotation.  Let us denote this hyperK\"ahler rotation by $\check{X}$ with its K\"ahler form $\check{\omega}$ and holomorphic volume form $\check{\Omega}$ given by 
           \begin{align}\label{HK rel}
            \check{\omega}=\mbox{Re}\Omega, \hspace{3mm} \check{\Omega}=\omega-i\mbox{Im}\Omega.
           \end{align}
        It is not too hard to see that there exist local sections of the fibration. Thus, there exists a compact set $K\subseteq \check{X}$ such that $\check{X}\setminus K\rightarrow \Delta^*$ is an elliptic fibration, where $\Delta^*$ is the punctured disc. In particular, there exists a holomorphic map from $\Delta^*$ to the moduli space $\mathcal{M}_{1,1}$ of elliptic curves with a marked point. Since the topological monodromy of the fibration around infinity is conjugate to $\begin{pmatrix} 1 & d \\ 0 & 1 \end{pmatrix}$, the elliptic fibration $\check{X}\setminus K\rightarrow \Delta^*$ is the pull-back of the universal family over $\mathcal{M}_{1,1}$ by \cite[Proposition 5.9]{Hain}. Moreover, the $j$-invariants of the elliptic fibration near infinity in $\check{X}$ converge to infinity and the holomorphic map $\Delta^*\rightarrow \mathcal{M}_{1,1}$ extends to $\Delta\rightarrow \overline{\mathcal{M}}_{1,1}$. Pulling back the universal family over $\overline{\mathcal{M}}_{1,1}$ gives a compactification $\check{Y}$ of $\check{X}$. From the monodromy of the fibration around infinity, the added fibre $\check{D}$ at infinity is of type $I_d$ in Kodaira's classification. To sum up, a suitable hyperK\"ahler rotation $\check{X}$ of $X$ can be compactified to a compact complex surface by adding an $I_d$-fibre.
        
      \subsection{Implications of the Enriques-Kodaira Classification}  
        Now we can use the Enriques-Kodaira classification of compact complex surfaces to deduce the structure of $\check{Y}$.  Recall the following definition \cite{HeckLoo}:
        
        \begin{defn}\label{defn: RES}
        A rational elliptic surface $\check{Y}$ is a smooth, compact complex surface satisfying any of the following equivalent definitions:
        \begin{enumerate}
        \item[$(i)$] $\check{Y}$ is  rational surface with an elliptic fibration structure possessing a section.
        \item[$(ii)$] $\check{Y}$ can be realized as the blowup of the base locus of a pencil of cubics in $\mathbb{P}^2$.
        \item[$(iii)$] $-K_{\check{Y}}$ is base-point free and defines an fibration.
        \end{enumerate}
        \end{defn}
        
        Let $\check{\omega}, \check{\Omega}$ be the corresponding Ricci-flat K\"ahler form and holomorphic volume form on $\check{X}$ obtained from the hyperK\"ahler rotation, as above. By a local calculation one checks $\check{\Omega}$ extends to a meromorphic $2$-form on $\check{Y}$ with a simple pole along $\check{D}$, where $\check{D}=\check{Y}\setminus \check{X}$. Therefore, we have $K_{\check{Y}}=\mathcal{O}_{\check{Y}}(-\check{D})$. From the elliptic fibration on $\check{Y}$, we have 
        $c_1(\check{Y})^2=0$.  
        There are no $(-1)$ curves in the fibre by the  adjunction formula. Since del Pezzo surfaces are either a blowup of $\mathbb{P}^2$ or $\mathbb{P}^1\times \mathbb{P}^1$, we have $b_1(Y)=0$. From the Mayer-Vietoris sequences, we have 
        \begin{align*}
        b_1(\check{Y})=b_1(\check{X})=b_1(X)=b_1(Y)=0.
        \end{align*}
         If  $\check{Y}$ is minimal, then $\check{Y}$ can only be an Enriques surface, a $K3$ surface, or a minimal, properly elliptic surface by the Enriques-Kodaira classification (see for example \cite[Chapter VI, Table 10]{BPV}). Since a properly elliptic surface has Kodaira dimension $1$, the fact that $K_{\check{Y}}=\mathcal{O}_{\check{Y}}(-\check{D})$ rules out all three possibilities and hence $\check{Y}$ is not minimal. 
        
        Now, any $(-1)$ curve $E$ in $\check{Y}$ has intersection one with $\check{D}$ and so $(\check{D}+E)^2>0$. Therefore, $\check{Y}$ is projective by \cite[Chapter IV, Theorem 5.2]{BPV}. Then $h^1(\check{Y},\mathcal{O}_{\check{Y}})=0$ from Hodge theory and $h^0(\check{Y},K_{\check{Y}}^2)=0$ since $-K_{\check{Y}}$ is effective. Finally, Castelnuovo's rationality criterion implies that $\check{Y}$ is rational. Then from the Riemann-Roch theorem, we have $H^0(\check{Y},\mathcal{O}_{\check{Y}}(\check{D}))=2$ and the local elliptic fibration near $\check{D}$ in $\check{Y}$ actually extends to an elliptic fibration. Indeed, one has $\mbox{Pic}(\check{Y})\cong H^2(\check{Y},\mathbb{Z})$ since $H^1(\check{Y},\mathcal{O}_{\check{Y}})=H^2(\check{Y},\mathcal{O}_{\check{Y}})=0$. Thus, $\check{Y}$ is a rational elliptic surface. In particular, there exists an elliptic fibration $\check{X}\rightarrow \mathbb{C}$.  Reversing the hyperK\"ahler rotation one obtains a special Lagrangian $X\rightarrow \mathbb{R}^2$.  Alternatively, one may argue directly from the deformation and compactness theory of $J$-holomorphic curves that the fibration on $X\setminus K$ extends to a holomorphic fibration in all of $X$; see \cite{CJL}.  To summarize, we have the following theorem 
        
        \begin{thm}\cite[Theorem 1.3, Theorem 1.6]{CJL} \label{CJL main thm}
        	Let $Y$ be a del Pezzo surface of degree $d$ and $D$ a smooth anti-canonical divisor, then $X=Y\setminus D$ admits a special Lagrangian fibration. Moreover, a suitable hyperK\"ahler rotation $\check{X}$ given by \eqref{HK rel} of $X$ can be compactified to a rational elliptic surface $\check{Y}$ by adding an $I_d$ fibre. 
        \end{thm}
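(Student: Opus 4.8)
The plan is to build the special Lagrangian fibration from local models near infinity, upgrade each local fiber to a genuine special Lagrangian by mean curvature flow, check that these limits continue to foliate a neighborhood of infinity, and then identify the hyperK\"ahler-rotated compactification by purely complex-geometric means. First I would produce the fibration near infinity. Since $D\in|-K_Y|$ is a smooth elliptic curve, it carries a special Lagrangian fibration with any primitive fiber class $\gamma\in H_1(D,\mathbb{Z})$. Lifting this through the Calabi ansatz of Section~\ref{sec: Calabi ansatz} gives a genuine special Lagrangian torus fibration of $X_{\mathcal{C}}$; transporting it through the diffeomorphism $\Phi$ (using the exponential convergence of $(\omega_{TY},\Omega)$ to the Calabi model) and applying Moser's trick yields a family of graded Lagrangian tori $M_{\epsilon}\subseteq X$ near infinity whose phase functions are $O(e^{-C/\epsilon})$. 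By Theorem~\ref{LMCF conv}, for $\epsilon\ll 1$ each $M_{\epsilon}$ flows under LMCF to a smooth special Lagrangian torus $\tilde{M}_{\epsilon}$.

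The crux, and where I expect the main obstacle to lie, is showing that these limits still assemble into a fibration, i.e.\ that distinct initial tori neither merge nor intersect. I would argue this symplectically. LMCF preserves the Hamiltonian isotopy class, so for disjoint $M_{\epsilon},M'_{\epsilon'}$ one has $HF(M_{\epsilon},M'_{\epsilon'})=0$; if their limits coincided, invariance of Floer cohomology would force $0=HF(\tilde{M}_{\epsilon},\tilde{M}_{\epsilon'})\cong HF(\tilde{M}_{\epsilon})\cong H^*(T^2)\neq 0$, a contradiction. Setting up Floer theory here requires the Gromov-type compactness of \cite[Proposition 5.3]{CJL}, since the geometry is unbounded. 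To rule out the limits merely intersecting, I would hyperK\"ahler rotate so the $\tilde{M}_{\epsilon}$ become holomorphic curves; positivity of intersections then forces finitely many positively-counted intersection points, while the homological intersection vanishes, so the limits are disjoint. This yields a genuine special Lagrangian torus fibration over a neighborhood of infinity.

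Next I would perform the hyperK\"ahler rotation \eqref{HK rel}, under which this fibration becomes a genus-one holomorphic fibration near infinity in $\check{X}$. Producing local sections gives an elliptic fibration $\check{X}\setminus K\to\Delta^*$ and hence a holomorphic map $\Delta^*\to\mathcal{M}_{1,1}$. The topological monodromy around infinity is conjugate to $\begin{pmatrix} 1 & d \\ 0 & 1 \end{pmatrix}$, so by \cite[Proposition 5.9]{Hain} the fibration is the pullback of the universal family; since the $j$-invariant blows up at infinity, the map extends to $\Delta\to\overline{\mathcal{M}}_{1,1}$, and pulling back the universal family compactifies $\check{X}$ by adding a fiber $\check{D}$ of Kodaira type $I_d$.

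Finally I would identify $\check{Y}$ through the Enriques--Kodaira classification. A local computation shows $\check{\Omega}$ extends with a simple pole along $\check{D}$, so $K_{\check{Y}}=\mathcal{O}_{\check{Y}}(-\check{D})$ and $c_1(\check{Y})^2=0$, while Mayer--Vietoris gives $b_1(\check{Y})=b_1(Y)=0$. If $\check{Y}$ were minimal it would be an Enriques, $K3$, or properly elliptic surface, each incompatible with $K_{\check{Y}}=\mathcal{O}_{\check{Y}}(-\check{D})$, so $\check{Y}$ is non-minimal; any $(-1)$-curve $E$ meets $\check{D}$ once, whence $(\check{D}+E)^2>0$ and $\check{Y}$ is projective. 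Then $h^1(\mathcal{O}_{\check{Y}})=h^0(K_{\check{Y}}^2)=0$, Castelnuovo's criterion yields rationality, Riemann--Roch gives $h^0(\mathcal{O}_{\check{Y}}(\check{D}))=2$ so the local fibration extends globally, and $\check{Y}$ is a rational elliptic surface in the sense of Definition~\ref{defn: RES}. Reversing the rotation (or appealing to the deformation and compactness theory of the holomorphic fibers) extends the special Lagrangian fibration over all of $X$, completing the proof.
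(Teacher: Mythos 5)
Your proposal is correct and follows essentially the same route as the paper: lifting the fibration from the Calabi ansatz, Moser's trick plus LMCF convergence, the Floer-cohomology and positivity-of-intersection arguments for disjointness of the limits, the monodromy/$\overline{\mathcal{M}}_{1,1}$ compactification by an $I_d$ fibre, and the Enriques--Kodaira classification to identify $\check{Y}$ as a rational elliptic surface. No substantive differences to flag.
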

       \begin{rk}
       	  In the case of $Y=\mathbb{P}^2$, this proves a conjecture of Auroux \cite[Conjecture 5.1]{Aur}. It is worth mentioning that the corresponding rational elliptic surface $\check{Y}$ has the singular configuration $I_9I_1^3$. For readers interested in mirror symmetry, it is not hard to connect this to the mirror superpotential of $\mathbb{P}^2$. The mirror superpotential of $\mathbb{P}^2$ is $x+y+\frac{1}{xy}:(\mathbb{C}^*)^2\rightarrow \mathbb{C}$ and it happens to be compactified to the unique extremal rational elliptic surface with singular configuration $I_9I_1^3$. This hints at the relation between mirror symmetry and hyperK\"ahler rotation; see Section \ref{sec: ms} for more discussion.
       \end{rk}
       \begin{rk}
       	  When $Y$ is a rational elliptic surface and $D$ is a smooth elliptic fiber, Tian-Yau \cite{TY} constructed an asymptotically cylindrical Ricci-flat metric \cite[Theorem 5.2]{TY}. The argument in theorem \ref{CJL main thm} also applies in this case, see \cite[Theorem 1.3]{CJL}. In particular, the special Lagrangian fibration in $X$ for a generic choice of $(Y,D)$ admits $12$ singular fibres of type $I_1$. This  confirms another conjecture of Auroux \cite[Conjecture 2.10]{Aur}.
       \end{rk}
       \begin{rk}
       	   Recall that every primitive class  $\gamma\in H_1(D,\mathbb{Z})$, generates a special Lagrangian fibration in $X$ with fibre homology class the $S^1$ product with the special Lagrangian in the homology class of $\gamma$ in $D$. In particular, there are countably many different special Lagrangian fibration structures in $X$. 
       \end{rk}

\section{special Lagrangian Fibrations in Rational Elliptic Surfaces}
 In this section we discuss Calabi-Yau metrics and special Lagrangian fibrations on rational elliptic surfaces. Suppose $\check{Y}\rightarrow \mathbb{P}^1$ is a rational elliptic surface, as described in Definition~\ref{defn: RES}.  From the canonical bundle formula of blowups, any fibre of the elliptic fibration is an anti-canonical divisor. Choose any fibre $\check{D}\subseteq \check{Y}$ and let $\check{X}=\check{Y}\setminus \check{D}$, there exists a meromorphic form $\check{\Omega}$ on $\check{Y}$ with simple pole along $\check{D}$ unique up to $\mathbb{C}^*$-scaling.  Consider the long exact sequence of pairs $(\check{Y},\check{X})$ with integer coefficients; we have
  \begin{align} \label{les}
  0\rightarrow  H^1(\check{D})\rightarrow H_2(\check{X})\rightarrow H_2(\check{Y})\rightarrow H^2(\check{D}).
  \end{align} 
  where we use the Poincar\'e duality $H^k(\check{D})\cong H_{4-k}(\check{Y},\check{X})$. Let $[C]\in H_2(\check{X})$ be the image of the generator of $H^1(\check{D})\cong \mathbb{Z}$, which is called the ``bad cycle" in $\check{X}$. 
   
   \begin{thm}[Hein, \cite{Hein}]~\label{thm: hein}
   For any K\"ahler form $\check{\omega}'$ on $\check{X}$ with $\int_{\check{X}}(\check{\omega}')^2<\infty$ and $[\check{\omega}'].[C]=0$ for $[C]$ being the ``bad cycle" \footnote{One can see that $[C]$ is called the ``bad cycle" as it seems to be an obstruction of the existence of the Ricci-flat metric. }, there exists a complete Ricci-flat metric $\check{\omega}$ on $\check{X}$ such that 
  \begin{enumerate}
  	\item $\check{\omega}^2=\alpha \Omega\wedge \bar{\Omega}$ for some sufficiently large $\alpha$. 
  	\item $\check{\omega}-\check{\omega}'$ is $d$-exact.
  	\item $\check{\omega}$ decays exponentially fast to some {\bf semi-flat metric} near infinity. 
  \end{enumerate}
   	
\end{thm}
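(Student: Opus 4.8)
The plan is to adapt the Tian--Yau programme, in the sharpened form developed by Hein, to the degenerate geometry at infinity: first construct a background K\"ahler metric modeled on the semi-flat metric near $\check D$, then reduce the Ricci-flat equation to a complex Monge--Amp\`ere equation whose right-hand side is exponentially small, and finally solve that equation by the continuity method in weighted function spaces, reading off the asymptotics from the linearization at infinity.

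The first task is to build the model. Near $\check D$ the surface $\check X$ fibres over a punctured disc $\Delta^*$ by smooth elliptic curves degenerating as one approaches $\check D$, with monodromy conjugate to $\left(\begin{smallmatrix}1 & d\\ 0 & 1\end{smallmatrix}\right)$; the total space of such a fibration carries an explicit \emph{semi-flat} metric $\omega_{sf}$, flat along the torus fibres and governed on the base by the imaginary part of the period $\tau$, which one checks by direct calculation satisfies $\omega_{sf}^2=\alpha\,\Omega\wedge\bar\Omega$ up to exponentially small error in the distance to $\check D$. The cohomological hypothesis $[\check\omega'].[C]=0$ enters precisely here: the bad cycle $[C]$ is the image of the generator of $H^1(\check D)$ under \eqref{les}, and its pairing with the K\"ahler class is exactly the flux obstructing the identification of $\check\omega'$, near infinity, with a semi-flat form. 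When this pairing vanishes I can glue, via a cut-off, a scaling of $\omega_{sf}$ on the end to a representative of $[\check\omega']$ in the interior, obtaining a global background metric $\omega_0$ that equals $\omega_{sf}$ off a compact set and satisfies that $\omega_0-\check\omega'$ is $d$-exact; the volume normalization ties $\alpha$ to $[\check\omega']$ and the scale of $\Omega$ via $\alpha=\int_{\check X}(\check\omega')^2\big/\int_{\check X}\Omega\wedge\bar\Omega$, and the asserted largeness of $\alpha$ is the regime in which $\omega_{sf}$ is an accurate model and the estimates close.

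With the model in hand I set $\check\omega=\omega_0+\ddb u$ and rewrite conclusion (1) as
\[
(\omega_0+\ddb u)^2=e^{f}\,\omega_0^2,\qquad e^{f}:=\frac{\alpha\,\Omega\wedge\bar\Omega}{\omega_0^2},
\]
where $f$ is supported in the gluing region up to exponentially decaying terms and has vanishing integral against $\omega_0^2$ by the normalization of $\alpha$; any solution then gives conclusion (2) for free, since $\check\omega-\check\omega'=\ddb u+(\omega_0-\check\omega')$ is $d$-exact. I would solve the equation by the continuity method, deforming $f$ to $0$. The analytic heart, and the step I expect to be the main obstacle, is the a priori $C^0$ estimate on the complete, finite-volume space $(\check X,\omega_0)$: because the fibres collapse toward $\check D$ the injectivity radius degenerates, so $(\check X,\omega_0)$ is \emph{not} of bounded geometry and one cannot invoke the usual Moser iteration. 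Overcoming this is exactly Hein's improvement of Tian--Yau: one proves a weighted Sobolev inequality adapted to the volume growth and collapse of $\omega_{sf}$, and runs a weighted De Giorgi--Nash--Moser iteration to bound $\|u\|_{L^\infty}$ by a weighted norm of $f$. Given $C^0$ control, the second-order (Laplacian) estimate follows from a Yau/Aubin-type computation using the bounded bisectional curvature of $\omega_{sf}$ in the correct gauge, and higher regularity from Evans--Krylov and Schauder theory; openness and closedness of the continuity path are then standard in the weighted category.

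It remains to establish the exponential decay in (3). Here I would analyse the equation at infinity, where modulo exponentially small terms it linearizes to $\Delta_{sf}u=f$. Separating variables along the torus fibres, $\Delta_{sf}$ is a flat-torus Laplacian plus a base operator, so its mapping properties on weighted H\"older spaces are controlled by the nonzero fibre frequencies and a one-dimensional indicial analysis on $\Delta^*$. Choosing a weight below the first fibre frequency and the first base indicial root makes $\Delta_{sf}$ invertible on the weighted space, which upgrades the $C^0$ bound to exponential decay of $u$ together with all derivatives; since $\omega_0-\omega_{sf}$ vanishes off a compact set, this yields exponential convergence $\check\omega\to\omega_{sf}$. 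Thus everything downstream of the weighted $C^0$ estimate is comparatively routine, and the genuine difficulty is confined to carrying out the barehanded analysis in the collapsing regime where bounded geometry fails.
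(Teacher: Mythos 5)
The paper does not prove this statement: Theorem~\ref{thm: hein} is quoted verbatim from Hein's work \cite{Hein} and used as a black box, so there is no internal proof to compare against. Your sketch is nonetheless a faithful outline of Hein's actual strategy --- build the Greene--Shapere--Vafa--Yau semi-flat model on the $I_d$ end, use $[\check{\omega}'].[C]=0$ to glue it to the given class, reduce to a complex Monge--Amp\`ere equation with rapidly decaying right-hand side, and solve it by a continuity method whose crux is a weighted Moser-type $C^0$ estimate valid without bounded geometry. You have correctly located both the role of the bad cycle and the genuinely hard analytic step.

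Two points need repair. First, your normalization $\alpha=\int_{\check{X}}(\check{\omega}')^2\big/\int_{\check{X}}\Omega\wedge\bar{\Omega}$ is meaningless: the denominator is infinite (integrating $\Omega_{sf}\wedge\bar{\Omega}_{sf}$ over the fibres gives $\sim |\log|u||\,|u|^{-2}\,du\wedge d\bar{u}$, which diverges at $u=0$; equivalently, the Ricci-flat metric has volume growth $r^{4/3}$ and hence infinite total volume, even though $\int(\check{\omega}')^2<\infty$ --- consistent with $d$-exactness only because the primitive does not decay). In Hein's theorem $\alpha$ is a free, sufficiently large parameter, and solvability does not come from a zero-mean condition on $e^f-1$ (which is a compact-manifold artifact) but from the decay of $f$ and the mapping properties of the Laplacian on the complete, infinite-volume end. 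Second, your decay argument via ``a weight below the first fibre frequency'' is too quick: the fibres have fixed area but degenerating shape, so the lowest nonzero fibre eigenvalue tends to $0$ as $u\to 0$, and the decay one actually obtains is $O(e^{-\delta r^{2/3}})$ (equivalently exponential in $-\log|u|$), not $O(e^{-\delta r})$; this is the rate recorded in Theorem~\ref{generalized Hein's metric}. Neither issue affects the overall architecture, which matches the cited proof.
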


 We will now explain the {\bf semi-flat metric} appearing in Theorem~\ref{thm: hein}. For the purpose of this survey, we will only consider the case where $\check{D}$ is an $I_d$-fibre. Consider the model space
    \begin{align*}
      \check{X}_{mod}=  \Delta^*\times \mathbb{C}/\Lambda(u), \mbox{ where } \Lambda(u)=\mathbb{Z}\oplus \mathbb{Z}\frac{d}{2\pi i}\log{u},
    \end{align*} where $u$ is the coordinate on the punctured disc $\Delta^*$. $\check{X}_{mod}$ admits a natural partial compactification $\check{Y}_{mod}$ by adding an $I_d$-fibre over $u=0$. Any germ of an elliptic fibration over $\Delta$ with only one singular fibre of type $I_d$ over $u=0$ is biholomorhpic to $\check{X}_{mod}$ by the Abel-Jacobi map after a choice of a local section near infinity. Consider the holomorphic volume form $\check{\Omega}_{sf}=\frac{\kappa(u)}{u}dv\wedge du$, where $v$ is the coordinate of the elliptic fibres and $\kappa(u)$ is a holomorphic function in $u$.  Following the work of Greene-Shapere-Vafa-Yau \cite{GSVY} we consider the semi-flat metric 
    \begin{align}\label{sf metric}
       \check{\omega}_{sf,\epsilon} &:=  \sqrt{-1}|\kappa(u)|^2 \frac{k|\log|u||}{2\pi\epsilon}\frac{du\wedge d\bar{u}}{|u|^2} \notag \\
       &\quad + \frac{\sqrt{-1}}{2} \frac{2\pi\epsilon}{k|\log|u||} \left(dv+B(u,v)du\right)\wedge \overline{\left(dv+B(u,v)du\right)},
    \end{align} where $B(u,v) = -\frac{{\rm Im}(v)}{\sqrt{-1}u|\log|u||}$ and $\epsilon$ is the volume of the elliptic fibre.  It is easy to see that $\check{\omega}_{sf}$ restricts to a flat metric on each fibre and one can check directly that $2\check{\omega}_{sf}^2=\check{\Omega}_{sf}\wedge \bar{\check{\Omega}}_{sf}$, i.e. $\check{\omega}_{sf}$ is a hyperK\"ahler metric. 
    
    The homology $H_2(\check{X}_{mod},\mathbb{Z})$ is of rank two and is generated by the fibre and the ``bad cycle" $[C]\in H_2(\check{X}_{mod},\mathbb{Z})$, which is the unique homology class such that its push-forward in $H_2(\check{Y}_{mod},\mathbb{Z})$ is homologous to zero. Explicitly, $[C]$ can be realized by the surface $\{|u|=const, \mbox{Im}(x)=const\}$. Straightforward calculation shows that these are special Lagrangian tori with respect to $(\check{\omega}_{sf},\check{\Omega})$ and these cycles clearly define a fibration of $\check{X}_{mod}$.
    
    \begin{rk}
    A semi-flat metric on $\check{X}$ is induced by the semi-flat metric $\omega_{sf, \epsilon}$ on $\check{X}_{mod}$, together with a choice of a holomorphic section $\sigma$ of $\check{X}$ near infinity which enters via the identification of a neighborhood of infinity in $\check{X}$ with $X_{mod}$ . It is essential that different choices of section can yield radically different K\"ahler metrics; e.g. not de Rham cohomologous and not uniformly equivalent.  Thus, a semi-flat metric on $\check{X}$ should be denoted $\omega_{sf, \sigma, \epsilon}$ in order to record the dependence on $\sigma$.  However, for the purposes of this survey, we shall suppress the dependence on $\sigma$.
    \end{rk}
    
    Thanks to Hein's robust estimates the same argument as in Theorem \ref{LMCF conv} and Theorem \ref{CJL main thm} proves that LMCF of the above special Lagrangian torus model converges to a genuine special Lagrangian with respect to $(\check{\omega},\check{\Omega})$ and can be deformed to a special Lagrangian. To summarize, we have 
     \begin{thm}\label{SLag fibration RES} \cite[Theorem 3.4]{CJL2}
     	Given a rational elliptic surface $\check{X}$ and an $I_d$-fibre $\check{D}$, there exists a special Lagrangian fibration on $\check{X}=\check{Y}\setminus \check{D}$ with respect to any of the Ricci-flat K\"ahler metrics produced by Hein. Moreover, a suitable hyperK\"ahler rotation $\check{X}'$ of $\check{X}$ can be compactified to a rational elliptic surface $\check{Y}'$ by adding an $I_d$-fibre. 
     \end{thm}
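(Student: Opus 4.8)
My plan is to follow the strategy of Theorem~\ref{CJL main thm} closely, substituting the semi-flat model for the Calabi ansatz and Hein's asymptotics (Theorem~\ref{thm: hein}) for those of the Tian-Yau space. I would start from the explicit special Lagrangian tori $\{|u|=\mathrm{const},\ \mathrm{Im}(x)=\mathrm{const}\}$ in the model space $\check{X}_{mod}$, which are special Lagrangian for the semi-flat structure $(\check{\omega}_{sf,\epsilon},\check{\Omega}_{sf})$ and already fibre $\check{X}_{mod}$. Since Hein's metric $\check{\omega}$ decays exponentially to the semi-flat metric near infinity, transporting these tori into $\check{X}$ and applying Moser's trick yields a family of Lagrangian tori near infinity whose phase functions are exponentially small; in particular these are graded, almost special Lagrangians.

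Next I would evolve this family by the Lagrangian mean curvature flow to obtain genuine special Lagrangians. The crucial input is that Hein's estimates supply precisely the curvature and comparison bounds needed to run the convergence argument of Theorem~\ref{LMCF conv} unchanged, yielding smooth convergence to special Lagrangian tori. To verify that the flow does not merge distinct fibres, I would reuse the Floer-theoretic argument of the del Pezzo case: two disjoint fibres satisfy $HF(M_\epsilon,M'_{\epsilon'})=0$, whereas a common limit would give $HF\cong H^*(T^2)\neq 0$; and after hyperK\"ahler rotation the limits are holomorphic, so positivity of intersections together with vanishing topological intersection forces disjointness. This establishes a special Lagrangian torus fibration on a neighbourhood of infinity in $\check{X}$.

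To globalise the fibration and to obtain the compactification, I would perform the hyperK\"ahler rotation $\check{X}'$ of~\eqref{HK rel}, turning the special Lagrangian fibration into a genus-one holomorphic fibration. Near infinity this defines a holomorphic map $\Delta^*\to\mathcal{M}_{1,1}$ whose monodromy is conjugate to $\left(\begin{smallmatrix}1&d\\0&1\end{smallmatrix}\right)$, since we began with an $I_d$ fibre; hence the $j$-invariant tends to infinity, the map extends to $\Delta\to\overline{\mathcal{M}}_{1,1}$, and pulling back the universal family compactifies $\check{X}'$ by a fibre $\check{D}'$ of Kodaira type $I_d$. The Enriques-Kodaira classification argument of Theorem~\ref{CJL main thm}---ruling out minimal models via $K_{\check{Y}'}=\mathcal{O}(-\check{D}')$ and applying Castelnuovo's criterion---then identifies $\check{Y}'$ as a rational elliptic surface, and the resulting algebraic elliptic fibration restricts to a global fibration on $\check{X}'$; reversing the rotation gives the global special Lagrangian fibration on $\check{X}$.

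The principal obstacle is the analytic core of the second step: the semi-flat geometry degenerates at infinity---the injectivity radius collapses and the ambient geometry is unbounded---so neither long-time existence nor convergence of the Lagrangian mean curvature flow follows from the standard theory. Overcoming this requires exploiting Hein's sharp asymptotics to establish the a priori estimates, together with a version of the Sachs-Uhlenbeck-Gromov compactness theorem valid in this unbounded setting (cf.~\cite[Proposition 5.3]{CJL}), now adapted to the $I_d$ semi-flat model rather than the Calabi ansatz.
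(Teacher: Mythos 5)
Your proposal follows essentially the same route as the paper, which simply invokes ``the same argument as in Theorem~\ref{LMCF conv} and Theorem~\ref{CJL main thm}'' with Hein's asymptotics replacing the Tian-Yau ones and the explicit semi-flat tori replacing the Calabi-ansatz tori; your write-up is a faithful expansion of that sketch, including the Floer-theoretic disjointness argument and the $\overline{\mathcal{M}}_{1,1}$/Enriques-Kodaira compactification. No discrepancy to report.
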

      \begin{rk}
    	If $\check{D}$ is a singular fibre of any type other than $I_d$, then by the Mayer-Vietoris sequence the second homology of a neighborhood of infinity of $\check{X}$ is generated by the elliptic fibre up to torsion elements. In particular, $\check{X}$ cannot admit a special Lagrangian fibration in these cases. 
    \end{rk}

\section{Non-standard semi-flat metrics}
    
  It was observed by Hein that the Ricci-flat metric on $\check{X}$ constructed by Theorem~\ref{thm: hein} has the same general geometric quantities, including curvature and injectivity radius decay and volume growth as the $2$-dimensional Tian-Yau spaces; see ~\eqref{eq: geomTY}. Therefore, there are two natural questions to ask: 
\begin{enumerate}
\item[(Q1)] Is $\check{X}$ a suitable hyperK\"ahler rotation of $X$ as a complex manifold? 
\item[(Q2)] If the answer to (Q1) is affirmative, does the Tian-Yau metric agree with Hein's metric after hyperK\"ahler rotation?
  \end{enumerate}
  Question (Q1) is already answered positively in Theorem \ref{CJL main thm}, however the answer to (Q2) turns out to be negative.  Indeed,  assume that the answer to the second question is affirmative. Choose a weak del Pezzo surface $Y$ of degree $d$ and a generic smooth anti-canonical divisor $D$ with $X=Y\setminus D$. There exist a special Lagrangian fibration in $X$ by Theorem \ref{CJL main thm}. Moreover, there is a suitable hyperK\"ahler rotation $\check{X}$ of $X$ which can be compactified to a rational elliptic surface $\check{Y}$ by adding an $I_d$-fibre. If the hyperK\"ahler metric on $\check{X}$ is one of Hein's metric, then $\check{X}$ admits a special Lagrangian fibration from Theorem \ref{SLag fibration RES}. One can do a reverse hyperK\"ahler rotation to obtain a new special Lagrangian fibration in $X$, different from the one we started with, and with phase differing by exactly $\pi/2$. This implies that the fundamental domain of $D$ can be realized as a rectangle, which is absurd as we chose a generic $D$ in the beginning. To sum up, the special Lagrangian fibrations on the weak del Pezzo surfaces and rational elliptic surfaces seem detect some previously unknown hyperK\"ahler metrics on $\check{X}$. In particular, this suggests that there are more hyperK\"ahler metrics on $\check{X}_{mod}$ than those of the form \eqref{sf metric}.  This motivates the search for new ansatzes of Ricci-flat metrics on $\check{X}_{mod}$. It was discovered in \cite[Section 2]{CJL2} that for each $b_0\in \mathbb{R}$, there is a {\it non-standard semi-flat metric} given by 
     \begin{align*}
   \check{\omega}_{sf, b_0, \epsilon} &:= \sqrt{-1}\frac{|\kappa(u)|^2}{\epsilon} W^{-1}\frac{du\wedge d\bar{u}}{|u|^2}\\
   &\quad + \frac{\sqrt{-1}}{2} W \epsilon \left(dv+\widetilde{\Gamma}(v,u,b_0) du\right) \wedge \overline{\left(dv+\widetilde{\Gamma}(u,v,b_0) du\right)},
   \end{align*} where $W= \frac{2\pi}{k|\log|u||}$ and $
   \widetilde{\Gamma}(u,v,b_0) = B(u,v)+ \frac{b_0}{2\pi^2}\frac{|\log|u||}{u}$. 
   \begin{rk}
   	   	If $2b_0/d\in \mathbb{Z}$, then $\omega_{sf,b_0,\epsilon}$ is the standard semi-flat metric after fiberwise translation by a section $\sigma:\Delta^*\rightarrow \check{X}_{mod}$. Otherwise, $\check{\omega}_{sf,b_0,\epsilon}$ provides a new Ricci-flat metric on $\check{X}_{mod}$. Straightforward calculations show that the restriction of $\check{\omega}_{sf,b_0,\epsilon}$ to the elliptic fibres is also flat. In other words, there exists a $1$-parameter family of semi-flat metrics on $\check{X}_{mod}$, parametrized by $b_0$, for which a discrete set of choices give the standard semi-flat metric of Greene-Shapere-Vafa-Yau.
   \end{rk}
    
   Geometrically, the non-standard semi-flat metric can be obtained from the standard semi-flat metric by pulling back along the fibrewise translation map coming from a certain multi-section (possibly uncountably valued). Consider a multi-section $\sigma: \Delta^*\rightarrow \check{X}_{mod}$ of the form 
      \begin{align*}
         \sigma: &\Delta^*\longrightarrow \check{X}_{mod}\\
           & u\mapsto \bigg(u,h(u)+ \frac{c_0}{2\pi i}\log{u}+\frac{b_0}{(2\pi i)^2}(\log{u})^2 \bigg)
      \end{align*} for $b_0,c_0\in \mathbb{R}$ and $h(u)$ a holomorphic function on $\Delta^*$. Let $T^*_{\sigma}$ be the fibrewise translation by the multi-section $\sigma$. Straightforward calculation shows that $T^*_{\sigma}\check{\omega}_{sf}=\check{\omega}_{sf,b_0,sf}$ is well-defined on $\check{X}_{mod}$. It is worth noticing that $2b_0/d\in \mathbb{Z}$ if and only if $\sigma$ extends to a section over $\Delta\rightarrow \check{Y}_{mod}$, \cite[Lemma 3.28]{FM}. 
      
       The non-standard semi-flat metric $\check{\omega}_{sf,b_0,\epsilon}$ can be used as a model metric near $\check{D}$ to construct complete hyperK\"ahler metrics on $\check{X}$ following the arguments of Hein \cite{Hein}. Denote $[F]\in H_2(\check{X},\mathbb{Z})$ the homology class of the fibre. By choosing a local section near $\check{D}$, we may identify the elliptic fibration in $\check{X}$ near infinity with the local model $\check{X}_{mod}$. 
       \begin{thm} \cite[Theorem 2.16]{CJL2} \label{generalized Hein's metric} 
       	  Let $\check{\omega}_0$ be a K\"ahler form on $X$ such that 
       	    \begin{align*}
       	       [\check{\omega}_0]_{dR}\cdot [F]=\epsilon, \hspace{5mm} [\check{\omega}_0]_{dR} \cdot [C]=\frac{2b_0}{\epsilon},
       	    \end{align*} for some $b_0\in \mathbb{R}$. Then there exists a K\"ahler metric $\check{\omega}_{\alpha}$ on $\check{X}$ such that 
       	    \begin{align*}
       	    \check{\omega}_{\alpha}= \alpha T^*_{h}\check{\omega}_{sf,b_0,\frac{\epsilon}{\alpha}}
       	    \end{align*} near $\check{D}$ and $\check{\omega}_{\alpha}=\check{\omega}_0$ outside of a neighborhood of $\check{D}$. Here $h$ is a holomorphic section over $\Delta^*$ which depends only on the Bott-Chern cohomology class $[\check{\omega}_0]_{BC} \in H^{1,1}_{BC}(X, \mathbb{R})$. Moreover, for sufficiently large $\alpha$ \footnote{The condition can be viewed as saying the existence holds sufficiently close to  the large volume limit; see the discussion in Section \ref{sec: ms}}, there exists a hyperK\"ahler metric on $\check{X}$
       	      \begin{align*}
       	         \check{\omega}=\check{\omega}_{\alpha}
       	         +i\partial \bar{\partial} \phi, 
       	      \end{align*} such that 
       	      \begin{enumerate}
       	      \item  $\check{\omega}^2=\alpha^2\check{\Omega}\wedge \bar{\check{\Omega}}$, and
       	      	\item $\check{\omega}$ is asymptotic to the (possibly non-standard) semi-flat metric $T^*_h\check{\omega}$ in the sense that there exists a constant $\delta>0$ such that 
       	      	 \begin{align*}
       	      	    |\nabla^k \phi|\sim O(e^{-\delta r^{2/3}}),
       	      	 \end{align*} for every $k\in \mathbb{N}$, where $r$ is the distance function to a fixed point in $\check{X}$. 
       	      \end{enumerate}
       	    In particular, $\check{\omega}$ and $\check{\omega}_{sf,b_0,\epsilon}$ share the same curvature decay $|\nabla^k Rm|\leq r^{-2-k}$ for each $k\in \mathbb{N}$ and injectivity radius decay $inj\sim r(x)^{-1/3}$. 
       \end{thm}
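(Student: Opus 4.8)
The plan is to realize the Calabi--Yau metric $\check{\omega}$ as a solution of a complex Monge--Amp\`ere equation perturbing the background form $\check{\omega}_\alpha$, exactly in the spirit of Tian--Yau and Hein. Writing $\check{\omega} = \check{\omega}_\alpha + \ddb\phi$, the normalization $\check{\omega}^2 = \alpha^2\,\check{\Omega}\wedge\bar{\check{\Omega}}$ becomes
\[
(\check{\omega}_\alpha + \ddb\phi)^2 = e^{F}\,\check{\omega}_\alpha^2,\qquad e^{F} := \frac{\alpha^2\,\check{\Omega}\wedge\bar{\check{\Omega}}}{\check{\omega}_\alpha^2}.
\]
The decisive point is that near $\check{D}$ the background metric $\check{\omega}_\alpha$ coincides with the rescaled non-standard semi-flat metric $\alpha\,T_h^*\check{\omega}_{sf,b_0,\epsilon/\alpha}$, which is itself hyperK\"ahler; the factors of $\alpha$ and $\alpha^{-1}$ in the scaling are arranged precisely so that $e^F\equiv 1$, i.e. $F\equiv 0$, in a neighbourhood of infinity. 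Hence $F$ is \emph{compactly supported} (concentrated in the interpolation region where $\check{\omega}_\alpha$ transitions from $\check{\omega}_0$ to the semi-flat model), which is what makes the analysis tractable.

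First I would construct $\check{\omega}_\alpha$, where the two period hypotheses serve as the exact compatibility conditions for the gluing. The constraint $[\check{\omega}_0]_{dR}\cdot[F]=\epsilon$ fixes the fibre volume of the semi-flat model, while $[\check{\omega}_0]_{dR}\cdot[C]=2b_0/\epsilon$ selects the parameter $b_0$, and hence the correct non-standard semi-flat metric among the one-parameter family; the holomorphic section $h$, and thus the translation $T_h$, is pinned down by the Bott--Chern class $[\check{\omega}_0]_{BC}$. With these choices the de Rham and Bott--Chern classes of $\alpha\,T_h^*\check{\omega}_{sf,b_0,\epsilon/\alpha}$ and $\check{\omega}_0$ agree on a neighbourhood of infinity, so the two forms differ there by $\ddb$ of a globally defined potential, permitting a smooth interpolation that is positive for $\alpha$ large and yields the background K\"ahler form $\check{\omega}_\alpha$ of the statement.

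With $\check{\omega}_\alpha$ in hand, I would solve the Monge--Amp\`ere equation by invoking Hein's existence theory for the complex Monge--Amp\`ere equation on complete, non-compact K\"ahler manifolds \cite{Hein}. The relevant input is the intrinsic asymptotic geometry of the non-standard semi-flat model: completeness, the volume growth $\mathrm{Vol}(B_r)\sim r^{4/3}$, the curvature decay $|\nabla^k Rm|\leq C_k\, r^{-2-k}$, and the corresponding weighted Sobolev framework. Granting these, the $C^0$ estimate follows by Moser iteration, the higher estimates by Evans--Krylov and Schauder arguments adapted to the weighted setting, and the continuity method produces a smooth $\phi$. Because $F$ is compactly supported, a barrier and maximum-principle argument for the linearized operator, together with the volume growth, yields the exponential decay $|\nabla^k\phi|\lesssim e^{-\delta r^{2/3}}$, the exponent $2/3$ reflecting the volume exponent $2n/(n+1)=4/3$. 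Finally, since a Ricci-flat K\"ahler metric on a complex surface has holonomy in $SU(2)=Sp(1)$, the normalization $\check{\omega}^2=\alpha^2\check{\Omega}\wedge\bar{\check{\Omega}}$ makes $\check{\omega}$ hyperK\"ahler, and the decay of $\phi$ gives the stated asymptotics to $T_h^*\check{\omega}$.

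The principal difficulty is \emph{not} the Monge--Amp\`ere solvability per se, but the verification that Hein's analytic package genuinely applies to the non-standard model. As emphasized in the remark preceding the theorem, $\check{\omega}_{sf,b_0,\epsilon}$ need not be uniformly equivalent to, nor de Rham cohomologous to, the standard Greene--Shapere--Vafa--Yau metric, so one cannot transport Hein's estimates by a bi-Lipschitz comparison. The observation that resolves this is that $\check{\omega}_{sf,b_0,\epsilon}=T_\sigma^*\check{\omega}_{sf}$ is the pullback of the standard semi-flat metric by a (possibly multivalued) fibrewise holomorphic translation; consequently its \emph{intrinsic} Riemannian geometry---curvature decay, injectivity radius decay $\mathrm{inj}\sim r^{-1/3}$, and volume growth---coincides with the standard case, which is exactly what Hein's theorem takes as input. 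Thus the heart of the argument is checking that the non-standard semi-flat metric satisfies the same completeness and Sobolev-type growth hypotheses, after which Hein's existence theorem applies essentially verbatim. It is worth stressing that the non-vanishing period $[\check{\omega}_0]\cdot[C]=2b_0/\epsilon$ is precisely the ``bad cycle'' obstruction forcing one out of the standard ansatz: the non-standard semi-flat metric is engineered to absorb this period, thereby generalizing Hein's Theorem~\ref{thm: hein} to the regime $b_0\neq 0$.
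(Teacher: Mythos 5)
Your proposal matches the route the paper indicates: glue the ambient K\"ahler form $\check{\omega}_0$ to the rescaled non-standard semi-flat model $\alpha T_h^*\check{\omega}_{sf,b_0,\epsilon/\alpha}$ near $\check{D}$ (with the two period conditions and the Bott--Chern class supplying exactly the compatibility data needed for a global $\ddb$-potential on the end), and then solve the resulting complex Monge--Amp\`ere equation with compactly supported (after normalization) right-hand side via Hein's existence and decay package, the key verification being that the non-standard model has the same intrinsic asymptotic geometry as the standard one since it is the pullback by a fibrewise translation. This is essentially the same argument the survey sketches, so I have nothing substantive to add beyond noting that $e^F$ is a constant (not necessarily $1$) near infinity under the paper's normalization $2\check{\omega}_{sf}^2=\check{\Omega}_{sf}\wedge\bar{\check{\Omega}}_{sf}$, which does not affect the argument.
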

      \begin{rk}
      	 It is possible to show that for any $\epsilon, b_0$ there exist ambient K\"ahler forms $\check{\omega}_0$ satisfying the assumptions of Theorem~\ref{generalized Hein's metric}; see \cite{CJL2} or the proof of Theorem~\ref{another compactification} below. 
      \end{rk}
      
       For each $b_0$ such that $2b_0/d\in \mathbb{Q}$, there exists a homology class $C_{b_0}\in H_2(\check{X}_{mod},\mathbb{Z})$ such that $\int_{C_{b_0}}\check{\omega}_{sf,b_0,\epsilon}=0$ and there exists a special Lagrangian fibration on $\check{X}_{mod}$ respect to $(\check{\omega}_{sf,b_0,\epsilon},\check{\Omega}_{sf})$ with fibre class $C_{b_0}$. Similar to Theorem \ref{SLag fibration RES}, we have the following theorem 
       \begin{thm} \label{SYZ RES}
       	  With the above notation, if $2b_0/d\in \mathbb{Q}$ then there exists a special Lagrangian fibration on $\check{X}$ with fibre class $C_{b_0}$ with respect to the Ricci-flat metric in Theorem \ref{generalized Hein's metric}. Moreover, a suitable hyperK\"ahler rotation $\check{X}'$ of $\check{X}$ can be compactified to a rational elliptic surface $\check{Y}'$ by adding an $I_d$ fibre. 
       \end{thm}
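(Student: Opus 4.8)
The plan is to follow the template established in the proofs of Theorem~\ref{CJL main thm} and Theorem~\ref{SLag fibration RES}, now with the standard semi-flat metric replaced by the non-standard one $\check{\omega}_{sf,b_0,\epsilon}$. The first step is to produce the special Lagrangian fibration on the model space $\check{X}_{mod}$ with respect to $(\check{\omega}_{sf,b_0,\epsilon},\check{\Omega}_{sf})$. Here the hypothesis $2b_0/d\in\mathbb{Q}$ is essential: writing $2b_0/d=p/q$ in lowest terms, the candidate leaf swept out by the fiberwise translation underlying the non-standard metric closes up only after $q$ turns, producing a genuine \emph{compact} torus in the class $C_{b_0}$. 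One then checks by direct calculation, exactly as for the Greene--Shapere--Vafa--Yau metric, that these tori satisfy $\int_{C_{b_0}}\check{\omega}_{sf,b_0,\epsilon}=0$, have constant phase with respect to $\check{\Omega}_{sf}$, and foliate $\check{X}_{mod}$.

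With the model fibration in hand, I would transfer it to $\check{X}$. By Theorem~\ref{generalized Hein's metric} the Ricci-flat metric $\check{\omega}$ is exponentially asymptotic to the (possibly non-standard) semi-flat metric near $\check{D}$, so after identifying a neighborhood of infinity in $\check{X}$ with $\check{X}_{mod}$ via a local section, Moser's trick applied to the images of the model fibers produces graded Lagrangian tori $M_\epsilon$ whose phase functions are exponentially small. The LMCF convergence mechanism of Theorem~\ref{LMCF conv} then applies essentially verbatim: its proof depends only on Hein's curvature and injectivity-radius estimates together with the compactness result \cite[Proposition 5.3]{CJL}, and these are available here precisely because $\check{\omega}$ and $\check{\omega}_{sf,b_0,\epsilon}$ share the asymptotic geometry recorded in Theorem~\ref{generalized Hein's metric}. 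Thus each $M_\epsilon$ flows smoothly to a genuine special Lagrangian torus.

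The next step is to verify that the LMCF preserves the fibration structure, i.e.\ that distinct model fibers flow to disjoint limits. This is the symplectic argument of Section~\ref{sec: SYZ in dP}: if two disjoint almost special Lagrangians flowed to a common limit, then, since LMCF preserves the Hamiltonian isotopy class, one would obtain
\[
0=HF(M_\epsilon,M'_{\epsilon'})\cong HF(\tilde M_\epsilon)\cong H^*(T^2)\neq 0,
\]
a contradiction; moreover, after hyperK\"ahler rotation the limits become holomorphic curves, so positivity of intersections together with vanishing topological intersection number forces them to be disjoint. Hence the special Lagrangian tori assemble into an honest fibration near infinity.

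Finally, I would perform the hyperK\"ahler rotation $\check{X}'$ of $\check{X}$, under which the special Lagrangian fibration becomes a genus-one holomorphic fibration. Since the non-standard semi-flat metric is related to the standard one by the fiber-preserving translation $T^*_h$ covering the identity on the base, the affine structure at infinity, and hence the topological monodromy, is unaffected and remains conjugate to $\begin{pmatrix}1 & d\\ 0 & 1\end{pmatrix}$; consequently the $j$-invariant tends to infinity, the fibration extends across $u=0$ by pulling back the universal family over $\overline{\mathcal{M}}_{1,1}$, and the added fiber $\check{D}'$ is of Kodaira type $I_d$. The Enriques--Kodaira argument of Section~\ref{sec: SYZ in dP}---ruling out the candidate minimal models via $K_{\check{Y}'}=\mathcal{O}_{\check{Y}'}(-\check{D}')$, establishing projectivity from a $(-1)$-curve meeting $\check{D}'$, and invoking Castelnuovo's criterion---then identifies $\check{Y}'$ as a rational elliptic surface. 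The main obstacle is the very first step: confirming that the leaf $C_{b_0}$ is simultaneously compact and special Lagrangian for the non-standard metric, since it is exactly the rationality $2b_0/d\in\mathbb{Q}$ and the (possibly uncountably-valued) multi-section structure underlying $\check{\omega}_{sf,b_0,\epsilon}$ that make the translation orbit close up into a torus at all.
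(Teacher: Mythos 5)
Your proposal is correct and follows essentially the same route as the paper, which itself only sketches this theorem by first exhibiting the model special Lagrangian fibration on $\check{X}_{mod}$ in the class $C_{b_0}$ (where the rationality $2b_0/d\in\mathbb{Q}$ is exactly what makes the translated leaves close up and gives $\int_{C_{b_0}}\check{\omega}_{sf,b_0,\epsilon}=0$) and then invoking the Moser/LMCF, Floer-disjointness, and hyperK\"ahler-rotation-plus-Enriques--Kodaira arguments verbatim from Theorems~\ref{LMCF conv}, \ref{CJL main thm} and \ref{SLag fibration RES}. You have correctly identified both the template and the one genuinely new ingredient.
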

    
     With the non-standard semi-flat metric, we now can explain the hyperK\"ahler rotation of the Calabi ansatz.  
       \begin{thm}\cite[Appendix A]{CJL2} \label{local model} Assume that $D\cong \mathbb{C}/\mathbb{Z}\oplus \mathbb{Z}\tau$ is an elliptic curve, with $\tau$ in the upper half-plane. With the notations in Section \ref{sec: Calabi ansatz}, consider the hyperK\"ahler rotation of $X_{\mathcal{C}}$ with K\"ahler form  $\check{\omega}_{\mathcal{C}}$ and holomorphic volume form $\check{\Omega}_{\mathcal{C}}$ such that the ansatz special Lagrangian corresponding to $1\in \mathbb{Z}\oplus \mathbb{Z}\tau$ is of phase zero. Then with suitable choice of coordinate one has 
      	\begin{align*}
      	\check{\omega}_{\mathcal{C}}=\alpha\check{\omega}_{sf,b_0,\epsilon}, \hspace{5mm} \check{\Omega}_{\mathcal{C}}=\alpha\check{\Omega}_{sf},
      	\end{align*} 
      	where $b_0=-\frac{1}{2}\mbox{Re}(\tau)d$, $\epsilon=\frac{2\sqrt{2}\pi }{\mbox{Im}(\tau)}$ and $\alpha=\sqrt{d\pi \mbox{Im}(\tau)}$. In particular, there exists a bijection between $\tau \leftrightarrow (b_0,\epsilon)$, i.e., every (possibly non-standard) semi-flat metric can be realized as some hyperK\"ahler rotation of certain Calabi ansatz up to a scaling. 
      \end{thm}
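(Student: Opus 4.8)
The plan is to reduce the statement to an explicit computation in the two-dimensional Calabi ansatz followed by a change of holomorphic coordinates adapted to the hyperK\"ahler rotation. First I would fix coordinates: write $D = \mathbb{C}/(\mathbb{Z}\oplus\mathbb{Z}\tau)$ with coordinate $z$, take $\Omega_D = c\,dz$, and use the two normalizations of Section~\ref{sec: Calabi ansatz} to pin down $c$. The condition $[\omega_D] = 2\pi c_1(E)$ with $\deg E = d$ gives $\int_D \omega_D = 2\pi d$, while $\omega_D = \tfrac{\sqrt{-1}}{2}\Omega_D\wedge\overline{\Omega}_D$ forces $|c|^2 = \tfrac{2\pi d}{\mathrm{Im}\,\tau}$. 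Writing $|\xi|_h^2 = |w|^2 h$ with $\omega_D = -\sqrt{-1}\partial\bar\partial\log h$ (so $h = \exp(-\tfrac{2\pi d}{\mathrm{Im}\,\tau}(\mathrm{Im}\,z)^2)$) and setting $s := -\log|\xi|_h^2$, a direct computation with $f(s) = \tfrac{2}{3}s^{3/2}$ gives $\Omega_{\mathcal{C}} = c\,dz\wedge\tfrac{dw}{w}$ and $\omega_{\mathcal{C}} = s^{1/2}\omega_D + \tfrac{1}{2\sqrt{s}}\,\sqrt{-1}\,\partial s\wedge\bar\partial s$, using $\sqrt{-1}\,\partial\bar\partial s = \omega_D$.

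Next I would perform the hyperK\"ahler rotation at the correct phase. The lifted torus $L_\epsilon$ in the class $1\in H_1(D,\mathbb{Z})$ is parametrized by $(\mathrm{Re}\,z,\arg w)$ with $|w|$ constant, and on it $\Omega_{\mathcal{C}}|_{L_\epsilon} = \sqrt{-1}\,c\,d(\mathrm{Re}\,z)\wedge d(\arg w)$. The phrase ``phase zero'' fixes the residual phase of $\Omega_{\mathcal{C}}$: normalizing so that $\sqrt{-1}\,c > 0$ makes $L_\epsilon$ a phase-zero special Lagrangian of $(\omega_{\mathcal{C}},\Omega_{\mathcal{C}})$, and then the rotation \eqref{HK rel}, $\check{\omega}_{\mathcal{C}} = \mathrm{Re}\,\Omega_{\mathcal{C}}$ and $\check{\Omega}_{\mathcal{C}} = \omega_{\mathcal{C}} - \sqrt{-1}\,\mathrm{Im}\,\Omega_{\mathcal{C}}$, turns $L_\epsilon$ into the holomorphic elliptic fibre of $\check{X}_{\mathcal{C}}$. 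The content of the theorem is then to exhibit holomorphic coordinates $(u,v)$ for the complex structure of $\check{\Omega}_{\mathcal{C}}$ in which $\check{X}_{\mathcal{C}}$ becomes the semi-flat model $\check{X}_{mod} = \Delta^*\times\mathbb{C}/\Lambda(u)$.

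The geometry dictates the ansatz. Matching Riemannian distances to infinity ($s^{3/4}$ for the Calabi ansatz versus $|\log|u||^{3/2}$ for the semi-flat base) forces $-\log|u|\sim s^{1/2}$, while the transverse Lagrangian parameter $\mathrm{Im}\,z$ (of period $\mathrm{Im}\,\tau$) must supply the angle $\arg u\sim\tfrac{2\pi}{\mathrm{Im}\,\tau}\mathrm{Im}\,z$; the fibre coordinate $v$ combines $\mathrm{Re}\,z$ and $\arg w$, so that the two one-cycles of $L_\epsilon$ map to the generators $1$ and $\tfrac{d}{2\pi i}\log u$ of $\Lambda(u)$, the factor $d$ being forced by $\deg E = d$ through the monodromy of the circle bundle around the transverse direction. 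I would then determine $(u,v)$ precisely by imposing that $\check{\Omega}_{\mathcal{C}}$ be a decomposable $(2,0)$-form $\tfrac{\kappa(u)}{u}dv\wedge du$ and that the period lattice be exactly $\Lambda(u)$. Integrating the resulting forms produces a fibrewise translation by a multisection carrying a $(\log u)^2$ term, which is precisely the feature distinguishing the non-standard semi-flat metric. Substituting back and comparing with $\check{\omega}_{sf,b_0,\epsilon}$ and $\check{\Omega}_{sf}$ yields $\check{\omega}_{\mathcal{C}} = \alpha\,\check{\omega}_{sf,b_0,\epsilon}$ and $\check{\Omega}_{\mathcal{C}} = \alpha\,\check{\Omega}_{sf}$, where the constants are read off as $b_0 = -\tfrac12\,\mathrm{Re}(\tau)\,d$ from the coefficient of the tilt/$(\log u)^2$ term, and $\epsilon,\alpha$ from the normalization $2\check{\omega}_{\mathcal{C}}^2 = \check{\Omega}_{\mathcal{C}}\wedge\overline{\check{\Omega}}_{\mathcal{C}}$ together with the fibre-volume identity $\alpha\epsilon = \int_{L_\epsilon}\check{\omega}_{\mathcal{C}} = 2\pi|c| = 2\pi\sqrt{2\pi d/\mathrm{Im}\,\tau}$, which distributes as $\epsilon = \tfrac{2\sqrt{2}\,\pi}{\mathrm{Im}\,\tau}$ and $\alpha = \sqrt{d\pi\,\mathrm{Im}\,\tau}$. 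The bijection $\tau\leftrightarrow(b_0,\epsilon)$ then follows by inverting, namely $\tau = -\tfrac{2b_0}{d} + \sqrt{-1}\,\tfrac{2\sqrt{2}\,\pi}{\epsilon}$ in the upper half-plane.

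The main obstacle is the coordinate change of the third step: one must produce explicit holomorphic coordinates for the rotated complex structure and show that the resulting metric is \emph{exactly} (not merely cohomologically) the non-standard semi-flat metric, including the extra term $\tfrac{b_0}{2\pi^2}\tfrac{|\log|u||}{u}$ inside $\widetilde{\Gamma}$. This term vanishes when $\mathrm{Re}\,\tau = 0$ (a rectangular fundamental domain), and its appearance for general $\tau$ requires careful bookkeeping of the lattice monodromy and of the $(\log u)^2$ multisection; matching all of $b_0,\epsilon,\alpha$ on the nose, rather than up to scale, is the other delicate point, since the overall normalization must be split correctly between the scaling $\alpha$ and the fibre volume $\epsilon$.
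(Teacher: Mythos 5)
The survey does not actually contain a proof of this theorem --- it is quoted verbatim from \cite[Appendix A]{CJL2} --- so there is nothing in the paper to compare against line by line; the cited proof is a direct computation in coordinates, which is exactly the route you take. Everything you do carry out checks against the stated constants: the normalizations give $|c|^2=\tfrac{2\pi d}{\mathrm{Im}\,\tau}$, the phase-zero condition correctly forces $\sqrt{-1}\,c>0$, and your fibre-volume identity $\alpha\epsilon=2\pi|c|=2\pi\sqrt{2\pi d/\mathrm{Im}\,\tau}$ is consistent with $\epsilon=\tfrac{2\sqrt{2}\pi}{\mathrm{Im}\,\tau}$ and $\alpha=\sqrt{d\pi\,\mathrm{Im}\,\tau}$, as is $b_0=-\tfrac12 d\,\mathrm{Re}\,\tau$ with the paper's remark that the standard semi-flat metric is recovered exactly when $2b_0/d\in\mathbb{Z}$ (rectangular $\tau$ up to the lattice action).

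One point deserves sharpening. You propose to extract the split of $\alpha\epsilon$ into $\alpha$ and $\epsilon$ ``from the normalization $2\check{\omega}_{\mathcal{C}}^2=\check{\Omega}_{\mathcal{C}}\wedge\overline{\check{\Omega}}_{\mathcal{C}}$ together with the fibre-volume identity,'' but the Monge--Amp\`ere relation gives no information here: in \eqref{sf metric} the factors $\tfrac{k|\log|u||}{2\pi\epsilon}$ and $\tfrac{2\pi\epsilon}{k|\log|u||}$ cancel in $\check{\omega}_{sf,b_0,\epsilon}^2$, so $2(\alpha\check{\omega}_{sf,b_0,\epsilon})^2=(\alpha\check{\Omega}_{sf})\wedge\overline{(\alpha\check{\Omega}_{sf})}$ holds for \emph{every} split of the product $\alpha\epsilon$. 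The scaling $\alpha$ must instead be read off by comparing $\check{\Omega}_{\mathcal{C}}$ directly with $\check{\Omega}_{sf}=\tfrac{\kappa(u)}{u}dv\wedge du$ once $(u,v)$ and $\kappa$ are normalized so that the fibres are exactly $\mathbb{C}/\Lambda(u)$; only then does the fibre volume determine $\epsilon$. This, together with producing the explicit holomorphic coordinates $(u,v)$ for the rotated complex structure and verifying the $\tfrac{b_0}{2\pi^2}\tfrac{|\log|u||}{u}$ term in $\widetilde{\Gamma}$ on the nose, is the entire content of the computation; you have correctly identified it as the crux but deferred it, so the proposal is a sound plan rather than a complete argument.
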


\section{A uniqueness theorem for hyperK\"ahler metrics}
  A priori, Hein's construction \cite{Hein} or its generalization in Theorem \ref{generalized Hein's metric} may produce many hyperK\"ahler metrics on $\check{X}$ in the same de Rham, or even Bott-Chern cohomology class.  On the other hand, from the original proof of the Calabi conjecture \cite{Y}, one expects that the complex Monge-Amp\`ere equation has a unique solution with suitable prescribed asymptotic behavior. Even with the robust estimates of Hein giving the fast decay of the hyperK\"ahler metrics to the model metric, establishing a uniqueness theorem is not an easy task. The main obstacles are that $\check{X}$ is not Stein, and admits many holomorphic vector fields.  Indeed, since $\check{X}$ is not Stein, two given hyperK\"ahler metrics in the same de Rham cohomology classes may not represent the same Bott-Chern cohomology class. In this section we will sketch a uniqueness theorem for hyperK\"ahler metrics established in \cite{CJL2}.  Roughly speaking, the failure of the $\ddb$-lemma on $\check{X}$ can be related directly to the holomorphic vector fields on $\check{X}$.
  
   \begin{thm} \cite[Theorem 1.4]{CJL2} \label{uniqueness thm} 
   	   Let $\check{\omega}_1,\check{\omega}_2$ be two hyperK\"ahler metric on $\check{X}$ satisfying the following: 
   	    \begin{enumerate}
   	    	\item $[\check{\omega}_1]_{dR}=[\check{\omega}_2]_{dR}\in H^2(\check{X},\mathbb{R})$, 
   	    	\item $\check{\omega}_i^2=\alpha^2\check{\Omega}\wedge \bar{\check{\Omega}}$, for some $\alpha>0$ and $i=1,2$. 
             \item there exists a possibly non-standard semi-flat metric $\check{\omega}_{sf,b_0,\epsilon}$ such that after a suitable choice of local section near infinity, one has 
              \begin{align} \label{poly decay}
                 |\check{\omega}_i-\alpha\check{\omega}_{sf,\sigma, b_0,\epsilon}|_{\check{\omega}_{sf, b_0,\epsilon}}\leq C r^{-\frac{4}{3}}.
              \end{align} 
   	    \end{enumerate} Then $\check{\omega}_1=\check{\omega}_2$. 
   \end{thm}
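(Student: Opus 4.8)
The plan is to reduce the statement to a Liouville-type theorem for a single potential. Suppose for the moment that we can write $\check{\omega}_2 = \check{\omega}_1 + \ddb\phi$ for a real function $\phi$ on $\check{X}$ that decays at infinity. Subtracting the two normalized Monge--Amp\`ere equations in hypothesis $(2)$ gives
\[
0 = \check{\omega}_2^2 - \check{\omega}_1^2 = (\check{\omega}_2-\check{\omega}_1)\wedge(\check{\omega}_1+\check{\omega}_2) = \ddb\phi\wedge\Theta, \qquad \Theta := \check{\omega}_1+\check{\omega}_2 > 0,
\]
so $\phi$ is harmonic with respect to the complete K\"ahler metric $\tfrac12\Theta$. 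Multiplying by $\phi$, integrating over a geodesic ball $B_R$, and applying Stokes' theorem yields
\[
\int_{B_R}\sqrt{-1}\,\partial\phi\wedge\bar\partial\phi\wedge\Theta = \int_{\partial B_R}\phi\,\sqrt{-1}\,\bar\partial\phi\wedge\Theta ,
\]
and the integrand on the left is pointwise nonnegative. The asymptotic geometry recorded in Theorem~\ref{generalized Hein's metric} and \eqref{eq: geomTY} --- volume growth ${\rm Vol}(B_R)\sim R^{4/3}$ together with the corresponding area growth of geodesic spheres --- combined with the decay of $\phi$ and of $\nabla\phi$ (the latter from interior elliptic estimates for the harmonic function $\phi$) forces the boundary term to tend to zero as $R\to\infty$. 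Hence $\partial\phi\equiv 0$, so $\phi$ is constant, and the decay forces the constant to be zero; thus $\check{\omega}_1=\check{\omega}_2$.

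The real content is therefore the existence of the global decaying potential $\phi$, and this is exactly where the failure of $\check{X}$ to be Stein intervenes. Hypothesis $(1)$ only gives $\psi := \check{\omega}_2-\check{\omega}_1 = d\eta$ for some $1$-form $\eta$, and since the $\ddb$-lemma may fail on $\check{X}$ we cannot immediately improve this to a $\ddb$-potential. First I would use the decay estimate $(3)$ --- valid after the common choice of local section near infinity, which aligns the two asymptotic identifications so that $\psi$ genuinely decays like $r^{-4/3}$ --- together with the explicit semi-flat model over the $I_d$ fibre to choose the primitive $\eta$ with controlled decay. Decomposing $\eta = \eta^{1,0}+\overline{\eta^{1,0}}$ into types and using that $\psi$ is a real $(1,1)$-form, one finds $\partial\eta^{1,0}=0$, $\bar\partial\eta^{0,1}=0$, and $\psi = \partial\eta^{0,1}+\bar\partial\eta^{1,0}$ with $\eta^{0,1}=\overline{\eta^{1,0}}$. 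Producing the potential then reduces to solving $\bar\partial f = \eta^{0,1}$, since $f$ gives $\psi=\ddb(2\,{\rm Im}\,f)$; the obstruction to this is the Dolbeault class $[\eta^{0,1}]\in H^{0,1}_{\bar\partial}(\check{X}) = H^1(\check{X},\mathcal{O}_{\check{X}})$.

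This is precisely the advertised link between the failure of the $\ddb$-lemma and holomorphic vector fields: by conjugation the obstruction class in $H^1(\check{X},\mathcal{O}_{\check{X}})$ corresponds to holomorphic $1$-forms, which via contraction with the holomorphic symplectic form $\check{\Omega}$ are identified with holomorphic vector fields on the surface $\check{X}$. The key point is that the obstruction inherits the decay of $\psi$: solving the $\bar\partial$-equation in the weighted setting where $\eta^{0,1}$ lives, the obstruction to a \emph{decaying} solution is represented by a \emph{decaying} holomorphic $1$-form. But the holomorphic $1$-forms on the end are classified explicitly in the semi-flat model --- spanned by the pullbacks of $dv$ and $du/u$, dual to the fibre-translation and base-scaling vector fields --- and none of them decays. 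Hence the obstruction vanishes, $\bar\partial f = \eta^{0,1}$ is solvable with a decaying solution, and $\phi = 2\,{\rm Im}\,f$ is the desired decaying potential with $\psi=\ddb\phi$.

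I expect this last step --- the decay-refined $\ddb$-lemma --- to be the main obstacle. It requires weighted elliptic and Hodge estimates on a space that is complete but of \emph{unbounded} geometry (the injectivity radius degenerates like $r^{-1/3}$), so the standard compact or bounded-geometry Hodge theory does not apply directly. Instead one must exploit the fibred semi-flat structure near $\check{D}$ and the computation of the end cohomology --- in the spirit of the Mayer--Vietoris analysis of $H_2(\check{X})$ and the ``bad cycle'' discussed above --- both to construct the primitive $\eta$ with the correct weighted decay and to rule out the holomorphic-$1$-form obstruction under the prescribed decay rate $r^{-4/3}$.
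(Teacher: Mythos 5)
Your first step (the Liouville argument) is essentially the paper's: both exploit the subquadratic volume growth ${\rm Vol}(B_R)\sim R^{4/3}$ to kill an energy term. The paper runs a Caccioppoli/cutoff argument using only that $\phi$ is \emph{bounded} (it gets $\Delta_{\omega_1}\phi\geq 0$ from the Monge--Amp\`ere equation and AM--GM), whereas your boundary-term version needs decay of both $\phi$ and $\nabla\phi$ on geodesic spheres of area $\sim R^{1/3}$; the cutoff version is the safer one on a space whose injectivity radius degenerates, but this difference is cosmetic.

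The genuine gap is in producing the potential. The paper does \emph{not} attempt weighted $\bar{\partial}$/Hodge theory on the open end. Instead it uses \eqref{poly decay} to show that $d\beta=\check{\omega}_2-\check{\omega}_1$ has $L^1$ coefficients near the $I_d$ fibre, extends it to a closed $(1,1)$-current $T$ on the \emph{compact projective} surface $\check{Y}$ with $[T]_{dR}=0$, applies the $\ddb$-lemma for currents there to get $T=\ddb\phi$ with $\phi\in L^1$, and then proves $\phi$ is bounded by a fibrewise Poincar\'e--Sobolev averaging argument on the flat torus fibres. Your route --- solve $\bar{\partial}f=\eta^{0,1}$ in a weighted space and kill the obstruction by showing no holomorphic $1$-form on the end decays --- is deferred at exactly its hardest point, and its key claim is wrong as stated: in the semi-flat metric the base factor is $\sim|\log|u||\,|du|^2/|u|^2$ and $r\sim|\log|u||^{3/2}$, so $|du/u|_{\check{\omega}_{sf,b_0,\epsilon}}\sim r^{-1/3}$ \emph{does} decay, and at precisely the critical rate one expects for a primitive of a form decaying like $r^{-4/3}$. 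The obstruction therefore sits at the borderline weight where Fredholm/weighted-Hodge theory degenerates, and you cannot conclude that it vanishes without further input. (Note also that $H^1(\check{X},\mathcal{O}_{\check{X}})$ is infinite-dimensional, since $\check{X}$ fibres over $\mathbb{C}$, so the unweighted Dolbeault obstruction space gives no control at all; everything hinges on the weighted theory you leave open.) The compactification-plus-currents argument is the missing idea.
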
 
\begin{proof}[Sketch of proof]
Suppose that we are given $\omega_1, \omega_2$ as above.  Let $x_0 \in X$ be a fixed point, and let $B_{\omega_i}(R)$ denote the ball of radius $R$ around $x_0$.  Since $\omega_i$ converge toward a semi-flat metric we have
\[
{\rm Vol}(B_{\omega_i}(R)) \sim R^{4/3}.
\]
Let us assume that we can write $\omega_2 = \omega_1+ \ddb \phi$ where $\phi$ is bounded.  By adding a constant, we may assume that $\phi \geq 0$.  Then, from the Monge-Amp\`ere equation and the AM-GM inequality we have
\[
\Delta_{\omega_1} \phi \geq 0.
\]
Let $r(x)={\rm dist}_{\omega_1}(x_0,x)$, and let $\eta \geq 0$ be a smooth function such that $\eta \equiv 1$ on $[0,1]$ and $\eta \equiv 0$ on $[2,\infty]$ and $|\eta'|\leq 2$.  Define 
\[
\eta_{R} = \eta\left(\frac{r}{R}\right).
\]
Then, for $R\gg 1$ we have
\[
\begin{aligned}
\int_{X} \eta_{R}^2 \Delta_{\omega_1}\phi^2 \omega_1^2 &= 2\int_{X} \eta_{R}^2(|\nabla \phi|^2_{\omega_1} + \phi \Delta_{\omega_1}\phi) \omega_1^2\\
&\geq  2\int_{X} \eta_{R}^2|\nabla \phi|^2_{\omega_1}  \omega_1^2.
\end{aligned}
\]
On the other hand, integration by parts yields
\[
\begin{aligned}
\int_{X} \eta_{R}^2 \Delta_{\omega_1}\phi^2 \omega_1^2 &= -4 \int_{X} \eta_{R} \phi \langle \nabla \eta_{R}, \nabla \phi \rangle_{\omega_1} \omega_1^2\\
& \leq \int_{X}\eta_{R}^2|\nabla \phi|^2\omega_1^2 + C\int_{X}\phi^2 |\nabla\eta_{R}|_{\omega_1}^2 \omega_1^2.
\end{aligned}
\]
Combining these inequalities, and using that $\phi$ is bounded and $|\nabla \eta_{R}| \leq 2R^{-1}$ yields
\[
\int_{X} \eta_{R}^2|\nabla \phi|^2_{\omega_1}  \omega_1^2 \leq C R^{-2}\int_{B(2R)}\omega_1^2 \leq CR^{4/3-2}
\]
Taking the limit as $R\rightarrow +\infty$ yields $\nabla \phi=0$, and hence $\phi$ is constant.  Thus, we have reduced the uniqueness theorem to producing a bounded potential function $\phi$.  

To produce a bounded potential we argue as follows.  Write $\omega_2=\omega_1+d\beta$. Let $\omega_{sf,b_0, \epsilon}$ is induced by some section $\sigma$ near $\infty$.  We can find a rational elliptic surface $Y$ compactifying $X$ such that $\sigma$ extends as a holomorphic section over the $I_d$ singular fiber $Y\setminus X$.  By assumption we have
\begin{equation}\label{eq: decaydB}
|d\beta|_{\omega_{sf, b_0, \frac{\epsilon}{\alpha}}} \leq Cr^{-4/3}.
\end{equation}
By comparing the semi-flat metric with a smooth K\"ahler metric in coordinates near the $I_d$ fiber, one observes that $d\beta$ has $L^1$ coefficients, and hence extends as a $(1,1)$ current $T$ on $Y$.  One then shows that $T$ is closed, and that $[T]_{dR} =0$ in $H^{2}(Y,\mathbb{R})$.  Thus, by the $\ddb$-lemma for currents we can write $T=\ddb \phi$ for an $L^1$ function $\phi$.  We now need to show that $\phi$ is bounded.  We may as well work in $\check{X}_{mod}$ with coordinates $(u,v)$ as before. The bound~\eqref{eq: decaydB} with $d\beta = \ddb \phi$ implies that, along flat the torus fibers of $\check{X}_{mod}\rightarrow \Delta^*$ we have $|\phi_{v\bar{v}}| \leq C\frac{1}{(-\log|u|)^3}$ and so $\phi$ is roughly harmonic along the fibers.  Intuitively, this implies the $\phi$ is roughly constant along the fibers.  Indeed, since the fibers are flat tori, an argument using elliptic regularity and the Sobolev and Poincar\'e inequalities yields that
\[
\sup_{\pi^{-1}(u)}\bigg|\phi - \frac{2\pi}{d(-\log|u|)} \int_{\pi^{-1}(u)} \phi \frac{\sqrt{-1}dv\wedge d\bar{v}}{2}\bigg| \leq \frac{C}{(-\log|u|)^{\frac{1}{2}}}
\]
Roughly speaking, this implies that we can assume that $\phi = \phi(u)$ is a function pulled back from the base of the elliptic fibration.  In this case the estimate ~\eqref{eq: decaydB} implies
\[
|\phi_{u\bar{u}}| \leq \frac{C}{|u|^2(-\log|u|)^3}
\]
which easily implies that $u$ can be extended over the $I_d$ fiber as a bounded function.  
\end{proof}

For later applications, the key point is that we only assume the hyperK\"ahler metric $\check{\omega}_i$ has polynomial decay to the semi-flat metric instead of the exponential decay.  Indeed, given a possibly non-standard semi-flat metric, one can study the effect of translation by a section on its asymptotics and its Bott-Chern cohomology class.  With this one can deduce from Theorem \ref{uniqueness thm} the following stronger result: 
  \begin{thm} \cite[Proposition 4.8]{CJL2} \label{uniqueness modulo aut}
  	 Let $\mbox{Aut}_0(\check{X})$ denote the fibre preserving biholomorphisms of $\check{X}$ which are homotopic to the identity. Suppose that $\check{\omega}_1,\check{\omega}_2$ are two complete Ricci-flat metrics on $\check{X}$ such that 
  	 \begin{enumerate}
  	 	\item $\check{\omega}_1^2=\check{\omega}_2^2$,
  	 	\item $[\check{\omega}_1]_{dR}=[\check{\omega}_2]_{dR}$,
  	 	\item there are (possibly non-standard) semi-flat metrics $\omega_{sf, \sigma_i, b_{0,i}, \epsilon_i}$ such that 
		\[
		|\check{\omega}_i-\alpha\check{\omega}_{sf, b_{0,i},\frac{\epsilon_i}{\alpha}}|_{\check{\omega}_i}\leq C r_i^{-\frac{4}{3}}.
		\]
		where $r_i$ is the distance from a fixed point with respect to $\check{\omega}_i$.
  	 \end{enumerate} 
	 Then there exists $\Phi\in \mbox{Aut}_0(\check{X})$ such that $\Phi^*\check{\omega}_1=\check{\omega}_2$. 
  \end{thm}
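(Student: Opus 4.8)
The goal is to upgrade Theorem~\ref{uniqueness thm} from an exact-equality statement to a statement holding modulo the automorphism group $\mbox{Aut}_0(\check{X})$. The plan is to reduce the hypotheses of Theorem~\ref{uniqueness modulo aut} to those of Theorem~\ref{uniqueness thm} by finding an element $\Phi \in \mbox{Aut}_0(\check{X})$ whose pullback aligns the two metrics both in Bott-Chern cohomology class and in asymptotic model. The two obstructions preventing a direct application of Theorem~\ref{uniqueness thm} are: first, the two metrics are asymptotic to a priori different semi-flat models $\check{\omega}_{sf, \sigma_1, b_{0,1}, \epsilon_1}$ and $\check{\omega}_{sf, \sigma_2, b_{0,2}, \epsilon_2}$; and second, even granting the same de Rham class, they need not share the same Bott-Chern class on the non-Stein manifold $\check{X}$.

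First I would show that the asymptotic models must in fact have matching invariants. The parameter $\epsilon_i$ is the symplectic volume of the fibre, $\epsilon_i = [\check{\omega}_i]_{dR}\cdot[F]$, so $[\check{\omega}_1]_{dR}=[\check{\omega}_2]_{dR}$ forces $\epsilon_1=\epsilon_2=:\epsilon$. Similarly, the period of the semi-flat metric against the bad cycle $[C]$ is a topological pairing, and comparing $[\check{\omega}_i]_{dR}\cdot[C]$ pins down the value of $b_{0,i}$ modulo the indeterminacy coming from the choice of section; concretely, the relation $[\check{\omega}_0]_{dR}\cdot[C]=2b_0/\epsilon$ from Theorem~\ref{generalized Hein's metric} shows that $b_{0,1}$ and $b_{0,2}$ agree modulo the lattice shifts induced by integral sections. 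The residual discrepancy is exactly a fibrewise translation by a section $\sigma$, which is an element of $\mbox{Aut}_0(\check{X})$.

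Next I would study the effect of such a fibre-preserving biholomorphism $\Phi$ on both the asymptotic model and the Bott-Chern class. The key computation, already indicated in the text preceding the theorem, is that translating by a multi-section of the form appearing in the non-standard semi-flat construction pulls one semi-flat model back to another, shifting $b_0$ and the section data in a controlled way; simultaneously it shifts the Bott-Chern class $[\check{\omega}_i]_{BC}$ by a computable amount while preserving the de Rham class (since $\Phi$ is homotopic to the identity). The strategy is therefore to solve for the unique $\Phi\in\mbox{Aut}_0(\check{X})$ that simultaneously matches the two asymptotic semi-flat models and equalizes the Bott-Chern classes of $\Phi^*\check{\omega}_1$ and $\check{\omega}_2$. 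Because translating by a section moves the Bott-Chern class along a one-parameter family while the de Rham class is locked, and the $\ddb$-lemma failure on $\check{X}$ is governed precisely by this one-dimensional family (the holomorphic vector fields generating the translations), there should be a unique such $\Phi$.

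Having arranged that $\Phi^*\check{\omega}_1$ and $\check{\omega}_2$ share the same de Rham class, the same volume form (since $\Phi$ is a biholomorphism and $\check{\omega}_1^2=\check{\omega}_2^2$), and the same semi-flat asymptotic model with the required $r^{-4/3}$ polynomial decay, I would invoke Theorem~\ref{uniqueness thm} directly to conclude $\Phi^*\check{\omega}_1=\check{\omega}_2$. I expect the main obstacle to be the second step: carefully bookkeeping how a fibrewise translation by a section simultaneously affects the asymptotic parameters $(b_0,\sigma)$ and the Bott-Chern class, and verifying that the map from translation sections to the pair (asymptotic model shift, Bott-Chern shift) is a bijection onto the relevant discrepancy space. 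This amounts to showing that the only freedom in the $\ddb$-lemma on $\check{X}$ is accounted for by $\mbox{Aut}_0(\check{X})$, which is the precise sense in which, as the section preceding the theorem states, the failure of the $\ddb$-lemma is related directly to the holomorphic vector fields on $\check{X}$.
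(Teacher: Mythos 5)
Your proposal follows essentially the same route as the paper: produce a fibrewise translation by a global section, isotopic to the identity, that aligns the two semi-flat asymptotic models up to the $r^{-4/3}$ polynomial error, and then invoke Theorem~\ref{uniqueness thm}. The only quibble is that your separate step of equalizing Bott--Chern classes is not needed to apply Theorem~\ref{uniqueness thm} (whose hypothesis is only on de Rham classes; the Bott--Chern issue is absorbed into its proof via the compactification), and one should note, as the paper does, that the translation generally achieves only polynomial rather than exact or exponential matching of the models --- which is precisely why Theorem~\ref{uniqueness thm} was stated with polynomial decay.
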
 
  
To prove Theorem~\ref{uniqueness modulo aut} the idea is to produce a global section $\tau:\mathbb{C}\rightarrow \check{X}$ such that the map $\Phi$ corresponding to fiberwise translation by $\tau$ is isotopic to the identity and
  \begin{equation}\label{eq: autDecay}
  |\Phi^*\check{\omega}_{sf, b_{0,2},\frac{\epsilon_i}{\alpha}}-\check{\omega}_{sf, b_{0,1},\frac{\epsilon_i}{\alpha}}|_{\check{\omega}_{sf, b_{0,1},\frac{\epsilon_i}{\alpha}}}\leq C r^{-\frac{4}{3}}.
  \end{equation}
 One can then apply Theorem~\ref{uniqueness thm} to conclude.  The importance of requiring only polynomial decay, rather than exponential decay, is due to the fact that there may be no global automorphism $\Phi$ producing exponential decay in~\eqref{eq: autDecay}.
  
  The above theorem motivates the following definition of K\"ahler moduli for $\check{X}$, which sets up the foundation for the SYZ mirror symmetry discussion in Section \ref{sec: ms}. 
  Fix the holomorphic volume form $\check{\Omega}$ and define the moduli of complete, asymptotically semi-flat, hyperK\"ahler metrics of $\check{X}$ as 
  	  \begin{align*}
  	      \check{\mathcal{K}}_{CY}:=\{ \mbox{K\"ahler metric } \check{\omega}| 2\check{\omega}^2=\check{\Omega}\wedge \bar{\check{\Omega}} \mbox{ with \eqref{poly decay}} \}   /\mbox{Aut}_0(\check{X}).
  	  \end{align*}
By Theorem \ref{uniqueness modulo aut}, we can identify $\check{\mathcal{K}}_{CY}$ as a subset of $H^2(\check{X},\mathbb{R})$. It is then proved that $\check{\mathcal{K}}_{CY}$ is an open subset of $H^2(\check{X},\mathbb{R})$. In particular, $\check{\mathcal{K}}_{CY}$ is of dimension $11-d$, if $\check{D}$ is an $I_d$-fibre. Note that the moduli space of pairs $(Y,D)$ consisting of a degree $d$ del Pezzo and a smooth anti-canonical divisor has dimension $10-d$.  In particular, $ \check{\mathcal{K}}_{CY}$ seems to be one dimension too large. This mismatch is corrected by restricting to the subset of K\"ahler classes in $H^2(\check{X},\mathbb{R})$ admitting special Lagrangian fibrations. 

\section{Applications}
\subsection{SYZ Mirror Symmetry of Log Calabi-Yau Surfaces} \label{sec: ms}
It is known that the mirror of a del Pezzo surface $Y$ is a Landau-Ginzburg model consisting of a holomorphic function $W:(\mathbb{C}^*)^2 \rightarrow \mathbb{C}$ called the superpotential. The superpotential $W$ is mirror to $Y$ in various senses: 
\begin{itemize}
\item on the level of enumerative geometry, the quantum periods of $W$ computes the descendant Gromov-Witten invariants of $Y$ \cite{Giv}\cite{F}\cite{CGGK}.
\smallskip
\item on the level of homological mirror symmetry, the Fukaya category of $X$ is conjectured to be equivalent to the derived category of singularities of $W$ or matrix factorization category of $W$ \cite{FU}\cite{LU} \cite{CHL}. Conversely, the Fukaya-Siedel category of $W$ is equivalent to the derived category of coherent sheaves on $Y$ \cite{AKO}. In particular, Auroux-Kartzarkov-Orlov  \cite{AKO} observed that $W$ can be topologically compactified to a rational elliptic surface by adding an $I_d$ fibre if $Y$ is of degree $d$. 
\smallskip
\item Lunts-Przyjalkowski \cite{LP2} showed that the Hodge numbers in the sense of Kartzarkov-Kontsevich-Pantev \cite{KKP} of both sides coincide. 
\smallskip
\item Doran-Thompson \cite{DT} proposed lattice polarized mirror symmetry between the pairs of del Pezzo surfaces of degree $d$ with a smooth anti-canonical divisor and rational elliptic surfaces with an $I_d$-fibre.
\end{itemize}
 
By the classification of compact complex surfaces, del Pezzo surfaces are either blowups of $\mathbb{P}^2$ at $9-d$ points, $0\leq d\leq 8$, or $\mathbb{P}^1\times \mathbb{P}^1$. Thus, there are in total ten deformation families of pairs consisting of a del Pezzo surface with a smooth anti-canonical divisor. On the other hand, there are also ten deformation families of rational elliptic surfaces with an $I_d$ fibre: one for each $d$, $1\leq d\leq 9$ and one additional family for $d=8$. It is thus not hard to match each deformation families of del Pezzo surfaces of degree $d$ with the deformation families of rational elliptic surfaces with an $I_d$ fibre when $d\neq 8$. One can further identify the two families for $d=8$ topologically. Indeed, the complement of a smooth anti-canonical divisor in $\mathbb{P}^1\times \mathbb{P}^1$ (or the Hirzebruch surface $\mathbb{F}_1$) is diffeomorphic to the complement of an $I_8$ fibre in the rational elliptic surface deformation equivalent to the compactification of the superpotential of $\mathbb{P}^1\times \mathbb{P}^1$ (or $\mathbb{F}_1$ respectively) \cite[p.54]{CJL2}.  
 
Given a special Lagrangian fibration $X\rightarrow B$, there are two integral affine structures on the complement of the discriminant locus \cite{Hit} induced by the symplectic and complex structures respectively. In the case of Calabi-Yau surfaces, the two integral affine structures can be defined as follows: fix a reference point $u_0\in B_0$, where $B_0$ is the complement of the discriminant locus in the base $B$. For each $\gamma\in H_1(L_{u_0},\mathbb{Z})$ and $u\in B_0$ near $u_0$, we choose a path $\phi$ connecting $u_0$ and $u$ and define $C_{\gamma,\phi}$ be a cylinder which is an $S^1$-bundle over the image of $\phi$ with fibres homotopic to $\gamma$ via the parallel transport along $\phi$. Then the symplectic affine coordinate is defined to be 
    \begin{align}\label{symplectic affine}
       a_{\gamma}(u):=\int_{C_{\gamma,\phi}}\omega,
    \end{align} which is well-defined thanks to the special Lagrangian condition. Straightforward calculation shows that the transition functions between different choices of reference point $u_0$ and path $\phi$ fall in $\mbox{GL}(2,\mathbb{Z})\rtimes \mathbb{R}^2$. 
     If one replaces $\omega$ in \eqref{symplectic affine} by $\mbox{Im}\Omega$, then one gets the complex affine coordinates.  These coordinates play a key role in the SYZ conjecture, as reformulated and refined by Gross \cite[Conjecture 6.6]{G01}. To explain SYZ mirror symmetry in more detail, we will have to explain the relevant notions of complex/K\"ahler moduli. Since mirror symmetry is expected to happen near the ``large complex structure limit," we need to define a notion of large complex structure limit in this context. 
 
 Given a del Pezzo surface $Y$ of degree $d$ with a smooth anti-canonical divisor $D$, we fix the exact Tian-Yau metric $\omega_{TY}$ on $X=Y\setminus D$. This corresponds to a choice of monotone symplectic structure on $Y$. Therefore, the mirror rational elliptic should admit a distinguished complex structure. To describe such a rational elliptic surface we first recall the Torelli theorem for Looijenga pairs due to Gross-Hacking-Keel \cite{GHK}, as it applies to our current setting. Let $\check{Y}$ be a rational elliptic surface with an $I_d$ fibre $\check{D}$ and let $\check{X}=\check{Y}\setminus \check{D}$. Then the periods of the pair $(\check{Y},\check{D})$ are given by 
   \begin{align}
      \phi_{(\check{Y},\check{D})}: H_2(\check{X})/\mbox{Im}H^1(\check{D})\rightarrow  \mathbb{C}^*  \notag\\
        \gamma \mapsto \exp{2\pi i\big(\frac{\int_{\gamma}\check{\Omega}}{\int_{[C]}\check{\Omega}} \big)}.
   \end{align} 
   From \eqref{les}, we have $H_2(\check{X})/\mbox{Im}H^1(\check{D})\cong \check{D}^{\perp}$. Here $\check{D}^{\perp}$ is the subgroup of $\mbox{Pic}(\check{Y})$ with zero pairing with each component of $\check{D}$. Then we have $\phi_{(\check{Y},\check{D})}\in \mbox{Hom}(\check{D}^{\perp},\mathbb{C}^*)$.
      From a residue calculation in \cite[Proposition 3.12]{Fr}, this coincides with the restriction of line bundles in $\check{D}^{\perp}$ to $\check{D}$ via the identification $\mbox{Pic}^0(\check{D})\cong \mathbb{C}^*$. 
   With the above understanding, we can describe the moduli space of pairs $(\check{Y},\check{D})$:
  \begin{thm}[Gross-Hacking-Keel, \cite{GHK}]\label{Torelli Looijenga}
  	Assume that $(\check{Y}',\check{D}')$ is in the same deformation family of the pair $(\check{Y},\check{D})$. We identify 
  	$\check{D}^{\perp}\cong \check{D}'^{\perp}$ via parallel transport. Then $\phi_{(\check{Y}',\check{D}')}=\phi_{(\check{Y},\check{D})}$ implies an isomorphism of pairs $(\check{Y}',\check{D})\cong (\check{Y},\check{D})$. Moreover, given any homomorphism $\phi \in  \mbox{Hom}(\check{D}^{\perp},\mathbb{C}^*)$ there exists a pair $(\check{Y}',\check{D}')$ in the same deformation family with period $\phi$. 
  \end{thm}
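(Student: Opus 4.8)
The plan is to establish the two assertions --- injectivity of the period map on isomorphism classes, and surjectivity onto $\mbox{Hom}(\check{D}^{\perp},\mathbb{C}^*)$ --- by exploiting the very concrete structure of a rational elliptic surface with an $I_d$ fibre. The essential geometric input is that, since $\check{Y}$ is obtained by blowing up the base locus of a cubic pencil (Definition~\ref{defn: RES}$(ii)$), the lattice $H^2(\check{Y},\mathbb{Z})$ is the standard unimodular lattice $\mathbb{Z}^{1,9}$ with $K_{\check{Y}}=-\check{D}$, and $\check{D}^\perp$ is the orthogonal complement of the anti-canonical class. When $\check{D}$ is of type $I_d$ this $\check{D}^\perp$ contains a root system (a sub-$\widetilde{E}_8$ type configuration), and the period homomorphism $\phi_{(\check Y,\check D)}$ records precisely the restriction data $\mathcal{O}_{\check{Y}}(L)|_{\check{D}}\in \mbox{Pic}^0(\check{D})\cong\mathbb{C}^*$ for $L\in\check{D}^\perp$, as noted in the excerpt via the residue computation of \cite[Proposition 3.12]{Fr}.

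The first step is to reduce both statements to a Torelli-type statement for the pair at the level of Hodge theory. Fixing the deformation family fixes the topological type of $(\check{Y},\check{D})$ and an identification of lattices by parallel transport; the local period map is then a holomorphic map from the moduli of such pairs into $\mbox{Hom}(\check{D}^\perp,\mathbb{C}^*)$. For injectivity I would argue that two pairs with equal periods have isomorphic mixed Hodge structures on $H^2(\check{X})$, and then promote this Hodge-theoretic isomorphism to an actual isomorphism of pairs. The promotion step is where the geometry of the anticanonical cycle enters: one uses that automorphisms of the lattice preserving $\check{D}^\perp$ and the period are realized by deformations fixing the complex structure, invoking the global Torelli theorem for the underlying rational surface together with the fact that a line bundle trivial on $\check{D}^\perp$ and agreeing on restriction to $\check{D}$ pins down the gluing data of the $I_d$ fibre. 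For surjectivity, the strategy is to show the period map is a local isomorphism (by matching dimensions: the moduli of pairs has the same dimension as $\mbox{Hom}(\check{D}^\perp,\mathbb{C}^*)$) and then that its image is both open and closed; closedness follows from a properness argument using the fact that degenerations of the pair $(\check{Y},\check{D})$ within the family are controlled by the $I_d$ structure.

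I expect the main obstacle to be the passage from the numerical/Hodge-theoretic data to an honest isomorphism of the pairs $(\check{Y}',\check{D})\cong(\check{Y},\check{D})$ --- that is, upgrading a match of period homomorphisms into a biholomorphism carrying $\check{D}'$ to $\check{D}$. The difficulty is that $\check{Y}$ carries many $(-1)$-curves and a rich automorphism group, so the marking must be handled carefully to ensure the isomorphism one produces is compatible with the fixed identification $\check{D}^\perp\cong\check{D}'^\perp$; one must rule out the period being preserved by a nontrivial monodromy/reflection in the root lattice that does not come from an isomorphism of pairs. The cleanest route, following Gross-Hacking-Keel, is to realize the period map as recording the position of the del Pezzo/rational-elliptic structure inside a fixed period domain for the lattice $\check{D}^\perp$, where surjectivity and injectivity become the statement that this domain is exactly $\mbox{Hom}(\check{D}^\perp,\mathbb{C}^*)$ and that the relevant arithmetic group acts as the group of deformation-equivalences of marked pairs; I would then simply cite the general Looijenga-pair Torelli theorem of \cite{GHK} specialized to this lattice rather than reprove it from scratch.
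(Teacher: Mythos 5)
This theorem is not proved in the paper at all: it is quoted as a black box from Gross--Hacking--Keel \cite{GHK} (supplemented by the residue computation of \cite[Proposition 3.12]{Fr} identifying the period homomorphism with restriction of line bundles to $\mbox{Pic}^0(\check{D})\cong\mathbb{C}^*$), and since your proposal also concludes by citing the Looijenga-pair Torelli theorem of \cite{GHK} rather than reproving it, you and the paper are in substance doing the same thing. One caveat: several of the intermediate heuristics in your sketch would not survive as an actual proof --- in particular there is no ``global Torelli theorem for the underlying rational surface'' to invoke, since a rational surface has $h^{2,0}=0$ and its Hodge structure on $H^2$ carries no moduli, and the genuine argument in \cite{GHK} instead proceeds through the explicit birational geometry of anticanonical pairs and the action of the Weyl group of $\check{D}^{\perp}$ on markings --- but because your final step is simply to cite the reference, this does not affect the validity of your conclusion.
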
 
  
  This result implies that there is a distinguished pair $(\check{Y}_e,\check{D}_e)$ in the deformation family of rational elliptic surfaces with an $I_d$ fibre such that the periods are trivial. We write $\check{X}_e=\check{Y}_e\setminus \check{D}_e$. We claim that the distinguished complex structure is mirror to the choice of exact Tian-Yau metric. 
  
  Similar to the case of $(\check{Y},\check{D})$, the period of $(Y,D)$ determines the pair in its deformation family by  \cite[Theorem 6.4]{McM} and a similar argument of \cite[Proposition 3.12]{Fr}. Let ${\mathcal{M}}_{cpx}$ \footnote{Technically, we will need to consider the moduli space of marked pairs together with the choice of SYZ fibre class and a volume form such that the SYZ fibre is of phase zero.}denote the moduli space of marked pairs of del Pezzo surfaces and smooth anti-canonical divisors. There is a natural holomorphic map ${\mathcal{M}}_{cpx}\rightarrow \mathbb{C}$ given by taking the $j$-invariant of the smooth anti-canonical divisor, determined by the periods of $\mbox{Im}\big(H^1({D}_0)\rightarrow H_2({X}_0)\big)\cong \mathbb{Z}^2$. Here $({Y}_0,{D}_0)$ is a reference pair and ${X}_0={Y}_0\setminus {D}_0$. The fibre is then biholomorphic to $\mbox{Hom}({D}_0^{\perp},{D}_0)$, determined by the rest of the periods in $H_2({X}_0)$. Notice that here $D_0$ is a smooth elliptic curve and thus we have a natural isomorphism $\mbox{Pic}^0(D_0)\cong D_0$.
  
  For SYZ mirror symmetry we will choose a primitive $2$-cycle $[\check{L}]\in H^2(\check{X}_e,\mathbb{Z})$ which can be represented by a $2$-cycle contained in a neighborhood of $\check{D}$. We define the complexified K\"ahler moduli to be
      \begin{align*}
      \check{\mathcal{M}}_{Kah}=\{\mathbb{B}+i[\omega]\in H^2(\check{X}_e,\mathbb{C})|[\check{\omega}].[\check{L}]=0, [\check{\omega}]\in \mathcal{K}_{CY}\}/ H^2(\check{X}_e,\mathbb{Z}).
      \end{align*}
      \begin{rk}
      	The complexified K\"ahler moduli $\check{\mathcal{M}}_{Kah}$ from different choices of $[\check{L}]$ are naturally identified and mirror to the monodromy action on $\mathcal{M}_{cpx}$. 
    \end{rk}

  We will still need a notion of ``large complex structure limit" for $\check{\mathcal{M}}_{cpx}$, where mirror symmetry is expected to happen. Let $(Y_i,D_i)$ be a sequence of del Pezzo surfaces $Y_i$ with anti-canonical divisors $D_i$ such that 
   \begin{enumerate}
   	\item $Y_i$ converges to a smooth del Pezzo surface $Y_{\infty}$ and
   	\item $D_i$ converges to an irreducible nodal anti-canonical divisor $D_{\infty}$ in $Y_{\infty}$. 
   \end{enumerate}
   In \cite{CJL2} we observed that the special Lagrangian fibration of $Y_i\setminus D_i$ corresponding to the vanishing cycle of $D_i$ 
   is collapsing near infinity. This is exactly the feature of the large complex structure limit from the SYZ point of view \cite{KS} and we will say such a sequence of pairs converges to the large complex structure limit. 
   \begin{que}
   	  Does the SYZ fibration of $X_i$ collapse globally as $i\rightarrow \infty$? Notice that the definition of large complex structure limit here is not the usual deepest boundary stratum of the moduli. 
   \end{que}
   
  With the above geometric set-up, we can state precisely the notion of SYZ mirror symmetry between del Pezzo surfaces and rational elliptic surfaces. 
   \begin{thm}[C.-Jacob-L., \cite{CJL2}]\label{SYZMS}
   	  Sufficiently close to a large complex structure limit, there exists a mirror map from $\Psi:\mathcal{M}_{cpx}\rightarrow \check{\mathcal{M}}_{Kah}$ such that the special Lagrangian fibrations on $X_q=Y_q\setminus D_q$ and $\check{X}_{\Psi(q)}=\check{Y}_{\Psi(q)}\setminus \check{D}_{\Psi(q)}$ are dual fibration in the following sense: 
   	  \begin{enumerate}
   	  	\item Let $B_{q},B_{\Psi(q)}$ be the base of the SYZ fibration in $X_q, \check{X}_{\Psi(q)}$. There exists a diffeomorphism $\underline{\Psi}_q: B_{q}\rightarrow B_{\Psi(q)}$ such that $\underline{\Psi}$ exchanges the complex and symplectic affine structures of the two SYZ fibrations. 
   	  	\item The sizes of the two fibres are inverses of each other.
   	  \end{enumerate}
   \end{thm}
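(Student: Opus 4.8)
The plan is to build $\Psi$ directly out of the hyperK\"ahler rotation \eqref{HK rel}, using the uniqueness theorem to descend to moduli and the Torelli theorem for Looijenga pairs to pin down the complex structure of the target. Given $q\in\mathcal M_{cpx}$, represented by a marked pair $(Y_q,D_q)$ with the exact Tian-Yau metric $\omega_{TY}$ and holomorphic volume form $\Omega_q$ on $X_q=Y_q\setminus D_q$, I would first invoke Theorem~\ref{CJL main thm} to produce the special Lagrangian fibration $X_q\to B_q$ and its hyperK\"ahler rotation $\check X_q$, a rational elliptic surface complement carrying $\check\omega_q=\mathrm{Re}\,\Omega_q$ and $\check\Omega_q=\omega_{TY}-i\,\mathrm{Im}\,\Omega_q$, which will serve as the mirror $\check X_{\Psi(q)}$. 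The central observation is that a single complexified quantity on the mirror repackages the two real structures of the del Pezzo side: writing $\int_\gamma\check\Omega_q=\int_\gamma\omega_{TY}-i\int_\gamma\mathrm{Im}\,\Omega_q$, the real part records the symplectic areas and the imaginary part the complex periods of $X_q$, while $[\check\omega_q]=[\mathrm{Re}\,\Omega_q]$ transports the complex modulus of $(Y_q,D_q)$ into the K\"ahler class of $\check X_q$. Since $\omega_{TY}$ is Tian-Yau and hence asymptotically semi-flat after rotation by Theorem~\ref{local model}, the class $[\check\omega_q]$ lies in $\check{\mathcal K}_{CY}$, and Theorem~\ref{uniqueness modulo aut} guarantees that $\check\omega_q$ is determined modulo $\mathrm{Aut}_0(\check X_q)$ by this class; together with a B-field encoding the remaining period data of $X_q$ this yields a well-defined point $\Psi(q)\in\check{\mathcal M}_{Kah}$. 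Finally Theorem~\ref{Torelli Looijenga} identifies the varying complex structure $\check Y_q$ with the distinguished $\check Y_e$ through the matching of periods, so that the K\"ahler data is transported onto the fixed complex manifold underlying $\check{\mathcal M}_{Kah}$.

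To see that $\Psi$ is a local diffeomorphism sufficiently close to the large complex structure limit, I would compute its leading-order behaviour from the dictionary of Theorem~\ref{local model}. Near the limit $D_q$ degenerates to a nodal cubic, its elliptic modulus $\tau$ has $\mathrm{Im}\,\tau\to\infty$, and Theorem~\ref{local model} identifies $\tau$ bijectively with the semi-flat parameters through $b_0=-\frac{1}{2}\mathrm{Re}(\tau)d$ and $\epsilon=\frac{2\sqrt{2}\pi}{\mathrm{Im}(\tau)}$. Combined with the fibration of $\mathcal M_{cpx}$ by the $j$-invariant over $\mathrm{Hom}(D_0^\perp,D_0)$ and the corresponding description of $\check{\mathcal M}_{Kah}$, this identifies source and target to leading order and shows $\Psi$ is a local diffeomorphism; the constraint $[\check\omega].[\check L]=0$ cutting out the classes admitting special Lagrangian fibrations is precisely what removes the one extra dimension of $\check{\mathcal K}_{CY}$ noted above, so that the two moduli spaces have equal dimension.

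For the duality statement I would reduce, near the large complex structure limit, to the semi-flat model, where the two assertions become classical SYZ $T$-duality for torus fibrations. The base $B_q$ carries the two integral affine structures defined by \eqref{symplectic affine} and by its analogue with $\omega$ replaced by $\mathrm{Im}\,\Omega$. I would show that the asymptotic semi-flat model governing $\check X_{\Psi(q)}$ is the fibrewise-dual torus fibration of the semi-flat model governing $X_q$; since dualizing a semi-flat fibration by flat $2$-tori interchanges the symplectic and complex affine structures on the base and replaces each fibre by its dual torus, this produces the diffeomorphism $\underline\Psi_q$ exchanging the affine structures in $(1)$ and the inversion of fibre sizes in $(2)$. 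Concretely I would run the computation on the explicit metric \eqref{sf metric} and its non-standard counterpart, matching parameters through the dictionary $\tau\leftrightarrow(b_0,\epsilon)$ of Theorem~\ref{local model}; the fibre volume $\epsilon\sim(\mathrm{Im}\,\tau)^{-1}$ then manifestly appears inverted relative to the del Pezzo fibre, whose Riemannian size grows like $\mathrm{Im}\,\tau$.

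The main obstacle is to upgrade these semi-flat identifications, which are exact only near infinity, to statements valid globally over $B_q$, in particular across the interior discriminant of $I_1$ fibres where no explicit model is available. This is exactly where the hypothesis of lying sufficiently close to the large complex structure limit enters: there the exponential decay of the Ricci-flat metric to the semi-flat model provided by Theorem~\ref{generalized Hein's metric} controls the corrections to the affine coordinates and the fibre volumes away from infinity, so that the leading-order $T$-duality persists globally. A secondary difficulty, already resolved by the tools at hand, is that $\Psi$ takes values in a moduli space, so one must check that the auxiliary choices entering the construction, such as the local section near infinity and the fibrewise-translation automorphism of Theorem~\ref{uniqueness modulo aut}, do not change the resulting class; the fact that that uniqueness theorem requires only polynomial rather than exponential decay is what makes this descent to $\check{\mathcal M}_{Kah}$ possible.
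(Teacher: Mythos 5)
You have assembled the right ingredients (the rotation \eqref{HK rel}, Theorems \ref{CJL main thm}, \ref{Torelli Looijenga}, \ref{uniqueness modulo aut} and the dictionary of Theorem \ref{local model}), but the way you put them together has two genuine gaps. First, you take the single hyperK\"ahler rotation $\check X_q$ of $X_q$ to \emph{be} the mirror and then assert that Theorem \ref{Torelli Looijenga} identifies $\check Y_q$ with the distinguished surface $\check Y_e$. This fails: since $\omega_{TY}$ is exact, the periods of $\check\Omega_q=\omega_{TY}-i\,\mathrm{Im}\,\Omega_q$ over $\check D^{\perp}$ reduce to $\exp\bigl(2\pi i\int_{\gamma}\mathrm{Im}\,\Omega_q/\int_{[C]}\mathrm{Im}\,\Omega_q\bigr)$, which are generically nontrivial, so $\check Y_q\not\cong\check Y_e$; the varying complex modulus of $(Y_q,D_q)$ lives precisely in these periods. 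The mirror in the theorem is the \emph{fixed} complex manifold $\check X_e$ carrying a varying Ricci-flat class, and the construction requires a \emph{second} hyperK\"ahler rotation: $\Psi(q)$ is chosen so that the rotation of $(\check X_e,\check\omega_{\Psi(q)})$ provided by Theorem \ref{SYZ RES} is biholomorphic to $\check Y_q$ with holomorphic volume forms differing by the phase $e^{i\pi/2}$ and a scale, and Theorem \ref{Torelli Looijenga} is applied to match \emph{these two} rotations by their periods, not to trivialize the periods of $\check Y_q$. As written, your argument never brings the special Lagrangian fibration on the actual mirror $(\check X_e,\check\omega_{\Psi(q)},\check\Omega_e)$ into contact with the fibration on $X_q$.

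Second, your proof of the duality statements (1)--(2) by fibrewise $T$-duality of semi-flat models near infinity, with corrections near the interior $I_1$ fibres ``controlled'' by Theorem \ref{generalized Hein's metric}, does not close: that theorem gives decay to the semi-flat model only at spatial infinity and says nothing about the metric near interior singular fibres, and in any case a leading-order identification cannot yield the \emph{exact} exchange of affine structures claimed in the theorem. In the paper's argument the exchange is exact and essentially tautological: under the biholomorphism of the two rotations with the $\pi/2$ offset, the forms $\omega_{TY}$ and $\mathrm{Im}\,\Omega_q$ on $X_q$ are carried, up to sign and the scale factor, to $\mathrm{Im}\,\check\Omega_e$ and $\check\omega_{\Psi(q)}$ respectively, so the period integrals \eqref{symplectic affine} defining the symplectic and complex affine structures are literally swapped and the scale inverts the fibre volumes; no semi-flat approximation or error estimate is needed once the biholomorphism is in place. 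Your dimension count, the role of the constraint $[\check\omega].[\check L]=0$, and the appeal to Theorem \ref{uniqueness modulo aut} for well-definedness are all in the right spirit, but the core mechanism --- two rotations matched by Torelli with a $\pi/2$ phase shift --- is missing.
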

   The mirror map $\Psi$ is chosen such that hyperK\"ahler rotation of $X_q$ in Theorem \ref{CJL main thm} and that of $\check{X}_{\Psi(q)}$ in Theorem \ref{SYZ RES} are biholomorphic but the holomorphic volume forms differ by a phase $\pi/2$ and are scaled to take care of the fibre sizes relation. The biholomorphism can be achieved by comparing the periods of the rational elliptic surfaces relative to the $I_d$-fibres and applying Theorem \ref{Torelli Looijenga}. To sum up, the mirror pairs are connected by two hyperK\"ahler rotations which match the complex structures but not the K\"ahler structures. 

There are some questions below that the authors will explore in the future. The first question concerns the compatibility between the SYZ mirror symmetry of the non-compact Calabi-Yau surfaces and the mirror superpotential which is the weighted count of the Maslov index two discs. Notice that the latter depends on the compactification.
\begin{que}
	Given a (complexified) K\"ahler form $\check{\omega}_{\Psi(q)}$ on $\check{X}_e$, does the mirror superpotential recover the del Pezzo surface pair $(Y_q,D_q)$? 
\end{que}
The SYZ mirror symmetry in Theorem \ref{SYZMS} is equivalent to turning off the K\"ahler moduli of the del Pezzo surfaces and complex moduli of rational surface pairs. The ultimate SYZ conjecture should include both the A-model and the B-model simultaneously. 
\begin{que}
	Since there are non-exact Tian-Yau metrics, can the SYZ mirror symmetry in Theorem \ref{SYZMS} be extended to include the K\"ahler moduli of del Pezzo surfaces and complex moduli of rational elliptic surfaces?
\end{que}	
One can also ask about the compatibility of different aspects of mirror symmetry in the reverse direction. 
\begin{que}
	If the answer to the above question is affirmative, do the SYZ mirrors coincide with the known superpotentials for non-montone symplectic forms of del Pezzo surfaces? 
\end{que}
 	
 At the end of this section, we will explain the recipe for obtaining the superpotential of a del Pezzo surface from the special Lagrangian fibration. Consider a sequence of pairs $(Y_i,D_i)$ converging to the large complex structure limit. Let $\check{X}_i$ be the hyperK\"ahler rotation of $X_i$ as in Theorem \ref{CJL main thm} and let $\check{Y}_i$ be the rational elliptic surface compactifying $\check{X}_i$. Notice that $\check{Y}_{\infty}$ is not defined.
 \begin{thm}[Lau-Lee-L., \cite{LLL}]
 	The limit $\lim_{i\rightarrow \infty}\check{Y}_i$ exists. Moreover, the limit as a complex manifold is the compactification of the superpotential of $Y_i$ as a monotone symplectic manifold.\footnote{Notice that the superpotential of $Y_i$ as a monotone symplectic manifold does not depend on the complex structures of $Y_i$.}
 \end{thm}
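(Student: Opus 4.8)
The plan is to reduce the statement to the asymptotics of periods and then to invoke the Torelli theorem of Gross-Hacking-Keel, Theorem~\ref{Torelli Looijenga}. By Theorem~\ref{CJL main thm} each $\check{Y}_i$ is a rational elliptic surface carrying an $I_d$ fibre $\check{D}_i$, and all of the pairs $(\check{Y}_i,\check{D}_i)$ lie in a single deformation family. By Theorem~\ref{Torelli Looijenga} the isomorphism class of $(\check{Y}_i,\check{D}_i)$ is completely determined by its period
\[
\phi_i\in \Hom(\check{D}^\perp,\mathbb{C}^*),\qquad \phi_i(\gamma)=\exp\!\Big(2\pi i\,\frac{\int_\gamma \check{\Omega}_i}{\int_{[C]}\check{\Omega}_i}\Big),
\]
so it suffices to prove that the $\phi_i$ converge in $\Hom(\check{D}^\perp,\mathbb{C}^*)$ to a homomorphism $\phi_\infty$, and then to identify the pair recovered from $\phi_\infty$ with the compactification $\overline{W}$ of the superpotential. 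The conceptual subtlety to keep in mind throughout is that $\check{Y}_\infty$ cannot be produced by feeding the limiting pair $(Y_\infty,D_\infty)$ into Theorem~\ref{CJL main thm}, since $D_\infty$ is a nodal curve; the limit must instead be extracted purely from the convergence of periods.

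To analyze $\phi_i$ I would feed in the hyperK\"ahler rotation relation~\eqref{HK rel}, which gives $\check{\Omega}_i=\omega_i-i\,\mathrm{Im}\,\Omega_i$, and split each period into a symplectic and a complex part,
\[
\int_\gamma\check{\Omega}_i=\int_\gamma\omega_i-i\int_\gamma\mathrm{Im}\,\Omega_i.
\]
Since the underlying symplectic structure is the fixed monotone one along the family, the symplectic periods $\int_\gamma\omega_i$ are constant and coincide with the symplectic affine coordinates~\eqref{symplectic affine} of the SYZ fibration. The complex periods $\int_\gamma\mathrm{Im}\,\Omega_i$ carry the complex moduli of $(Y_i,D_i)$ and degenerate as $i\to\infty$ at a rate governed by $j(D_i)\to\infty$. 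Using the long exact sequence~\eqref{les} to describe $\check{D}^\perp\cong H_2(\check{X})/\mathrm{Im}\,H^1(\check{D})$, the essential point is that the divergent part of the complex periods enters the numerator $\int_\gamma\check{\Omega}_i$ and the denominator $\int_{[C]}\check{\Omega}_i$ in a matched fashion, so that the normalized period $\int_\gamma\check{\Omega}_i/\int_{[C]}\check{\Omega}_i$ converges to a finite limit for every $\gamma\in\check{D}^\perp$. A careful estimate of these competing rates, together with the frozen monotone symplectic periods, then yields $\phi_i\to\phi_\infty\in\Hom(\check{D}^\perp,\mathbb{C}^*)$, so that the limit is again a rational elliptic surface with an $I_d$ fibre rather than a further degeneration, and $\lim_{i\to\infty}\check{Y}_i$ exists.

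It then remains to identify the period $\phi_\infty$ with that of $\overline{W}$. By Auroux-Katzarkov-Orlov~\cite{AKO} the compactification of the superpotential is a rational elliptic surface with an $I_d$ fibre, and its complex structure is encoded by the coefficients of $W$, which are the counts of Maslov index two discs with boundary on the SYZ fibres of the \emph{monotone} symplectic manifold $Y$. The bridge is that these disc counts are intrinsic to the symplectic geometry of $Y$ and can be organized, through the integral affine structure of the SYZ base, into exactly the data $\int_\gamma\omega_i$ that survives in the limit of the periods. The plan is therefore to compute the residue of $\overline{W}$ along its $I_d$ fibre, match it with the limiting symplectic periods, and conclude via Theorem~\ref{Torelli Looijenga} that $\lim_{i\to\infty}\check{Y}_i\cong\overline{W}$ as complex manifolds. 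As a consistency check one should recover, for $Y=\mathbb{P}^2$, the extremal rational elliptic surface with singular configuration $I_9I_1^3$.

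The step I expect to be the main obstacle is precisely this last identification: relating the enumerative disc data defining $W$ to the Hodge-theoretic period $\phi_\infty$. Controlling the open Gromov-Witten invariants uniformly as $(Y_i,D_i)$ approaches the large complex structure limit, and matching their wall-crossing/tropical organization on the SYZ base with the periods computed from~\eqref{HK rel}, is where the genuine content lies. By comparison, once the monotonicity of the symplectic form is used to freeze the symplectic periods, the existence of the limit and the absence of escape to the boundary of the period domain should be comparatively soft.
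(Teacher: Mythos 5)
Your overall frame---reduce everything to periods, apply the Gross--Hacking--Keel Torelli theorem (Theorem~\ref{Torelli Looijenga}), and compare with the Auroux--Katzarkov--Orlov compactification of $W$---is consistent with the strategy the paper attributes to \cite{LLL}, namely ``direct computation of both sides.'' But two points in your sketch diverge from what actually makes that computation close, and the second is a genuine gap. First, you invoke monotonicity to say the symplectic periods $\int_\gamma\omega_i$ are \emph{constant} along the family. The operative fact is stronger: the Tian--Yau metric is exact, $\omega_{TY}=\ddb\phi_{TY}$, so $\int_\gamma\omega_i=0$ for every $\gamma\in H_2(X_i,\mathbb{Z})$, hence by \eqref{HK rel} the periods $\int_\gamma\check{\Omega}_i$ are purely imaginary. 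This vanishing, combined with the explicit computation of the limiting complex affine structure near the large complex structure limit, is what shows the normalized periods tend to \emph{integers}, i.e.\ that $\phi_i$ converges to the trivial homomorphism and $\lim_i\check{Y}_i$ is the distinguished pair $(\check{Y}_e,\check{D}_e)$ singled out after Theorem~\ref{Torelli Looijenga}. With only ``constant'' symplectic periods you get convergence to \emph{some} point of $\Hom(\check{D}^\perp,\mathbb{C}^*)$ but no identification of which one, and the whole argument then hinges on your final matching step.

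That final step is where the gap lies. You propose to identify $\phi_\infty$ with the period of $\overline{W}$ by organizing the Maslov index two disc counts on the SYZ base and matching them against the Hodge-theoretic periods---and you correctly flag this open Gromov--Witten/wall-crossing bridge as the main obstacle. The proof in \cite{LLL} does not build that bridge. Instead it computes the two sides \emph{independently} to a common answer: on the mirror side, the superpotential of a monotone del Pezzo surface is an explicitly known Laurent polynomial (for $\mathbb{P}^2$, $x+y+\tfrac{1}{xy}$, compactifying to the extremal rational elliptic surface of type $I_9I_1^3$), one compactifies it following \cite{AKO} and checks directly that its periods relative to the $I_d$ fibre are trivial, so it is again $\check{Y}_e$; the two limits are then equal because both equal $\check{Y}_e$ by the uniqueness half of Theorem~\ref{Torelli Looijenga}. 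So the comparison happens at the level of the common answer, not at the level of equating enumerative data with periods. As written, your sketch leaves precisely the step that carries the content unproved, while the route the paper describes avoids it entirely; a Floer-theoretic explanation of the coincidence is posed in the paper as an open problem, not used as an ingredient.
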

This result is perhaps an unexpected consequence of the existence of an SYZ fibration. Recall that in the early days of mirror symmetry it was thought that hyperK\"ahler rotation could be the underlying mechanism for mirror symmetry of Calabi-Yau surfaces, but this expectation was soon quickly shown to be inaccurate by the work of Gross-Wilson \cite{GW}. When $D_t$ is degenerating to a nodal curve, the SYZ fibration near infinity of $X_t$ is collapsing, and thus $(Y_0,D_0)$ can be viewed as the large complex structure limit from the SYZ point of view. With this interpretation, 
the theorem above roughly says that ``the limit of hyperK\"ahler rotation towards the large complex structure limit gives the mirrors for del Pezzo surfaces." 
Notice that the $Y_t$ are all symplectomoprhic for different $t$ and thus share the same mirror superpotential. The proof of the theorem is by direct computation of both sides and comparing them: both lead to the distinguished rational elliptic surface $\check{Y}_e$ in the deformation family. It would be interesting to give a direct Floer-theoretic interpretation of this result.

\subsection{Torelli Theorem for Gravitational Instantons of $ALH^*$}
 Gravitational instantons were introduced by Hawking as the building block for his Euclidean quantum gravity theory \cite{Haw}. Mathematically, gravitational instantons are non-compact complete hyperK\"ahler $4$-manifolds with $L^2$ curvature tensor. The first gravitational instantons were of type $ALE$, which are of maximal volume growth. Here $ALE$ is an abbreviation of {\em asymptotically locally Euclidean}. Later, new classes of gravitational instantons were discovered and classified into various types: e.g. $ALF, ALG, ALH$ having volume growth $r^3, r^2, r$ respectively. The first stands for {\em asymptotically locally flat} and the latter two are simply named by induction. Later, using rational elliptic surfaces, Hein \cite{Hein} constructed the new examples beyond the $ALE/F/G/H$ classification, which are now called $ALH^*$, and $ALG^*$. Precisely, Hein showed that, given any rational elliptic surface $\check{Y}$ and a fibre $\check{D}$, the complement $\check{X}=\check{Y}\setminus \check{D}$ always admits a complete hyperK\"ahler metric asymptotic to the semi-flat metric and with $L^2$ curvature. Moreover, the gravitational instanton $\check{X}$ is $ALH, ALG, ALG^*$, or  $ALH^*$ if $\check{D}$ is smooth, a singular fibre with finite monodormy, an $I_d^*$ singular fibre, or an $I_d$ singular fibre, respectively. The $ALH^*$-gravitational instantons, which are relevant for our discussion, have volume growth $r^{4/3}$. Although the $ALG^*$-gravitational instantons have volume growth $r^2$, they have different curvature decay than the $ALG$-gravitational instantons. When a gravitational instanton is not of maximal volume growth, Sun-Zhang \cite{SZ} found that its infinity is modeled by a certain Gibbons-Hawking ansatz, thereby concluding that all gravitational instantons are one of the above six types.
   
One may want to further classify the gravitational instantons among each type. This question is separated into three parts: 
 \begin{enumerate}
 	\item What are the diffeomorphism types of gravitational instantons of a fixed type? 
 	\item Among each diffeomorphism type, given three cohomology classes in $H^2$, can they be realized as the the cohomology class of a hyperK\"ahler triple of a gravitational instanton of the given type?
 	\item If there are two gravitational instantons of the same diffeomorphism type with the same cohomology classes of hyperK\"ahler triple, are the two gravitational instantons the same? 
 \end{enumerate}
  The set of all possible triples of second cohomology classes that can be realized as gravitational instantons is called the {\em period domain}. The third question asks if every point in the period domain parametrizes exactly one gravitational instanton and, if true, is usually called the {\em Torelli theorem} for gravitational instantons. 
  
 Kronheimer first completely classified the $ALE$ gravitational instantons \cite{Kro}: every $ALE$-gravitational instanton up to hyperK\"ahler rotation is biholomorphic to a crepant resolution of a quotient of $\mathbb{C}^2$ by a finite subgroup of $SU(2)$. Moreover, for any triple of second cohomology classes not vanishing simultaneously on any $(-2)$-classes, there exists exactly one $ALE$-gravitational instanton realizing it. In the case of $ALG, ALH, ALH^*, ALG^*$, the gravitational instantons (up to suitable hyperK\"ahler rotations) can be compactified to rational elliptic surfaces \cite{CC, CJL3, HSVZ2, CV}. These results answer the first question in these cases. The Torelli type theorems for $ALH$, $ALF$, $ALH^*$, $ALG$, $ALG^*$ were proven in \cite{CC,CC2, CJL3, CVZ, CVZ2}. For the rest of this section, we will explain the Torelli theorem for $ALH^*$-gravitational instantons as a quick application of the results discussed in this paper. 
 
 First we recall the Torelli theorem for K3 surfaces as an analogue: all the K3 surfaces are diffeomorphic \cite{Ko}, the cohomology class of the $(2,0)$-form determines the complex structure of the K3 surface \cite{SS, BR, LP}, and Yau's celebrated theorem for the Calabi conjecture \cite{Y} determines the Ricci-flat metric uniquely within a prescribed K\"ahler class. These results in order answer the questions above. We will follow the same strategy to prove the Torelli theorem of gravitational instantons.
  
  \begin{defn}
  	 A gravitational instanton $(X,\omega, \Omega)$ is of type $ALH^*$ if there exists a diffeomorphism $F:X_{\mathcal{C}}\rightarrow X$ such that   
  	    \begin{align} \label{ALH* defn}
  	       \parallel \nabla^k (\omega-(F^{-1})^*\omega_{\mathcal{C}})\parallel =O(r^{-k-\epsilon}),  \parallel \nabla^k (\Omega-(F^{-1})^*\Omega_{\mathcal{C}})\parallel =O(r^{-k-\epsilon}),
  	    \end{align} for some $\epsilon>0$, where $X_{\mathcal{C}},\omega_{\mathcal{C}}, \Omega_{\mathcal{C}}$ were defined in Section \ref{sec: Calabi ansatz}.
  \end{defn} The estimates \eqref{ALH* defn} is improved by Sun-Zhang \cite{SZ} to exponential decay. Thus the same argument as in Section \ref{sec: SYZ in dP} implies the following uniformization result:
 \begin{thm} \label{uniformization} Given an $ALH^*$-gravitational instanton $(X,\omega,\Omega)$, the 
  hyperK\"ahler rotation  $(\check{X},\check{\omega},\check{\Omega})$ given by \eqref{HK rel} can be compactified to a rational elliptic surface $\check{Y}$ by adding an $I_d$-fibre. In particular, there are exactly $10$ diffeomorphism types of $ALH^*$-gravitational instantons. 
 \end{thm}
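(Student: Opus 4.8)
The plan is to recognize that the defining hypothesis of an $ALH^*$ gravitational instanton is exactly the asymptotic input needed to run the argument of Section~\ref{sec: SYZ in dP}, so that proof applies essentially verbatim. By definition there is a diffeomorphism $F\colon X_{\mathcal{C}}\to X$ under which $\omega,\Omega$ approach $\omega_{\mathcal{C}},\Omega_{\mathcal{C}}$ with polynomial decay, and by Sun--Zhang \cite{SZ} this can be upgraded to exponential decay. Thus, near infinity $(X,\omega,\Omega)$ agrees to exponential order with a two-dimensional Tian--Yau space, which is precisely the asymptotic behaviour that was extracted from \cite{HSVZ} in Section~\ref{sec: Calabi ansatz} and used as the starting point of Theorem~\ref{CJL main thm}. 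The key conceptual point is that nothing in that argument actually used the ambient del Pezzo surface beyond its anti-canonical end, so an intrinsic $ALH^*$ structure suffices.

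Concretely, I would proceed as follows. The base of the asymptotic Calabi ansatz is an elliptic curve $D$, which carries a special Lagrangian fibration for any primitive $\gamma\in H_1(D,\mathbb{Z})$; this lifts to a special Lagrangian torus fibration on $X_{\mathcal{C}}$ near infinity. Transporting by $F$ and running Moser's trick produces an almost special Lagrangian fibration on a neighborhood of infinity in $X$ with phase $O(e^{-C/\epsilon})$, where the exponential decay of \cite{SZ} is exactly what forces the phase to be exponentially small. Theorem~\ref{LMCF conv} then flows each such fibre to a genuine special Lagrangian torus, and the Floer-cohomology argument of Section~\ref{sec: SYZ in dP}, together with the Gromov--Sachs--Uhlenbeck compactness available in this unbounded geometry, shows that the flow keeps the fibres disjoint, so the fibration structure survives. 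Performing the hyperK\"ahler rotation \eqref{HK rel} turns this into a genus-one holomorphic fibration $\check{X}\setminus K\to\Delta^*$ with monodromy conjugate to $\begin{pmatrix} 1 & d \\ 0 & 1\end{pmatrix}$; since the $j$-invariants blow up, the fibration extends over $\Delta$ and adjoining the central fibre compactifies $\check{X}$ by an $I_d$ fibre $\check{D}$. One then identifies $\check{Y}$ exactly as before: $K_{\check{Y}}=\mathcal{O}_{\check{Y}}(-\check{D})$ and $c_1(\check{Y})^2=0$ let the Enriques--Kodaira classification rule out the minimal cases, a $(-1)$-curve meeting $\check{D}$ forces projectivity, and Castelnuovo's criterion with Riemann--Roch shows $\check{Y}$ is rational elliptic. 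The one hypothesis that is not automatic is $b_1(\check{Y})=0$, which in Section~\ref{sec: SYZ in dP} was inherited from $b_1(Y)=0$ for del Pezzo surfaces and which here must instead be deduced from the finite topology of the $ALH^*$ end.

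For the final count, the point is that hyperK\"ahler rotation does not alter the underlying smooth manifold, so the diffeomorphism type of $X$ equals that of $\check{X}=\check{Y}\setminus\check{D}$, and this is constant in deformation families of pairs $(\check{Y},\check{D})$. It therefore suffices to count such families with $\check{D}$ of type $I_d$. The Shioda--Tate relation $\mathrm{rk}\,\mathrm{MW}+\sum_v(m_v-1)=8$ forces $d-1\leq 8$, i.e. $d\in\{1,\dots,9\}$, and as recalled in Section~\ref{sec: ms} there is one deformation family for each such $d$ except $d=8$, where there are two (mirror to the two degree-$8$ del Pezzos $\mathbb{P}^1\times\mathbb{P}^1$ and $\mathbb{F}_1$), for a total of ten; each is realized by a Tian--Yau or Hein metric. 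That these give ten \emph{distinct} diffeomorphism types follows because $b_2(\check{X})=11-d$ separates differing $d$, while the two families at $d=8$ are separated by a finer invariant reflecting their differing Mordell--Weil data.

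Since the genuinely heavy analysis---convergence of Lagrangian mean curvature flow without bounded geometry and the Floer-theoretic preservation of the fibration---is already supplied by Theorem~\ref{LMCF conv} and Section~\ref{sec: SYZ in dP}, the main obstacle here is the comparatively soft but essential task of confirming that the intrinsic $ALH^*$ asymptotics furnish every input of that machinery. The exponential decay of Sun--Zhang is indispensable both for the exponentially small phase in Moser's trick and for the decay estimates driving the flow, and the vanishing of $b_1$ must be re-established without recourse to an ambient del Pezzo surface; the topological distinction of the two $d=8$ types is the remaining point requiring care.
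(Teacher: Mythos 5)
Your proposal matches the paper's own (very terse) proof: the survey simply invokes the Sun--Zhang upgrade of the $ALH^*$ decay \eqref{ALH* defn} to exponential decay and then asserts that the argument of Section~\ref{sec: SYZ in dP} applies verbatim, which is exactly your strategy, including the count of ten deformation families of rational elliptic surfaces with an $I_d$-fibre. The extra care you flag --- re-establishing $b_1(\check{Y})=0$ without an ambient del Pezzo surface and distinguishing the two $d=8$ families --- goes beyond what the survey records but is consistent with it.
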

  From the local model calculation, we have that the K\"ahler form $\check{\omega}$ is asymptotic to some (possibly non-standard) semi-flat metric. 

   Now assume that there are two $ALH^*$-gravitational instantons $(X_i,\omega_i,\Omega_i)$ with a diffeomorphism $F:X_2\cong X_1$ identifying the cohomology classes of the hyperK\"ahler triples. By Theorem \ref{uniformization}, suitable hyperK\"ahler rotations $\check{X}_i$ can be compactified to rational elliptic surfaces $\check{Y}_i$ by adding an $I_d$-fibre $\check{D}_i$. By using a Lemma of Friedman-Morgan \cite{FM} and computing the mapping class group of the Calabi model, the diffeomorphism $F:X_2\rightarrow X_1$ can be extended to $\check{Y}_2\rightarrow \check{Y}_1$ up to homotopy and  $F$ sends the homology classes of fibres/sections to those of fibres/sections. Then Theorem \ref{Torelli Looijenga} implies the isomorphism of the pairs $(\check{Y}_1,\check{D}_1)\cong (\check{Y}_2,\check{D}_2)$.\footnote{Caveat: due to the K\"ahler cone of $\check{X}_i$ being one dimension higher than the restriction from $\check{Y}_i$, it requires some treatment to argue the isomorphism actually preserves the K\"ahler classes.}

  Once the the pair $(\check{Y},\check{D})$ is determined by the Torelli theorem of Looijenga pairs, Theorem~\ref{uniqueness modulo aut} guarantees that the K\"ahler class $[\check{\omega}]$ will uniquely determine $\check{\omega}$ up to a translation by a holomorphic section (relative to a reference section). Notice that translation by a holomorphic section doesn't alter $\check{\Omega}$. Thus, we reach the Torelli theorem for $ALH^*$-gravitational instantons
  \begin{thm} \cite[Theorem 3.9]{CJL3} \label{Torelli ALH*}
  		Let $(X_i,\omega_i,\Omega_i)$ be $ALH^*$ gravitational instantons such that there exists a diffeomorphism $F:X_2\cong X_1$ with 
  	\begin{align*}
  	F^*[\omega_1]=[\omega_2]\in H^2(X_2,\mathbb{R}), F^*[\Omega_1]=[\Omega_2]\in H^2(X_2,\mathbb{C}).
  	\end{align*} Then there exists a diffeomorphism $f:X_2\rightarrow X_1$ such that $f^*\omega_1=\omega_2$ and $f^*\Omega_1=\Omega_2$. 
  \end{thm}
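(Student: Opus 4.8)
The plan is to transport everything to the compactified rational elliptic surfaces, where the rigidity results of the preceding sections apply, prove a Torelli statement there, and then undo the hyperK\"ahler rotation. First I would apply the uniformization result Theorem~\ref{uniformization} to each $(X_i,\omega_i,\Omega_i)$: the hyperK\"ahler rotation \eqref{HK rel} produces $(\check{X}_i,\check{\omega}_i,\check{\Omega}_i)$ with $\check{\omega}_i=\mathrm{Re}\,\Omega_i$ and $\check{\Omega}_i=\omega_i-i\,\mathrm{Im}\,\Omega_i$, and each $\check{X}_i$ compactifies to a rational elliptic surface $\check{Y}_i$ by adjoining an $I_d$-fibre $\check{D}_i$, with $\check{\omega}_i$ asymptotic to a possibly non-standard semi-flat metric. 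The underlying smooth manifold is unchanged by the rotation, so $F$ is simultaneously a diffeomorphism $\check{X}_2\to\check{X}_1$; and since $F^*[\omega_1]=[\omega_2]$ and $F^*[\Omega_1]=[\Omega_2]$, it matches both the K\"ahler class $[\check{\omega}_i]=[\mathrm{Re}\,\Omega_i]$ and the class $[\check{\Omega}_i]=[\omega_i]-i[\mathrm{Im}\,\Omega_i]$. This is precisely the cohomological data needed downstream.

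Next I would promote $F$ to a diffeomorphism of the compactifications $\check{Y}_2\to\check{Y}_1$, at least up to homotopy, in such a way that fibre and section classes go to fibre and section classes. Here the tool is a lemma of Friedman-Morgan \cite{FM} together with an explicit computation of the mapping class group of the Calabi model near infinity, which dictates how a diffeomorphism of the open part can be capped off across the $I_d$-fibre. With the resulting lattice isomorphism on $\check{D}^{\perp}$ in hand, I would compute the Gross-Hacking-Keel periods $\phi_{(\check{Y}_i,\check{D}_i)}$; because these are built from integrals of $\check{\Omega}_i$ and $F$ preserves the relevant cohomology, the periods agree under the identification. The Torelli theorem for Looijenga pairs, Theorem~\ref{Torelli Looijenga}, then gives an isomorphism of pairs $(\check{Y}_1,\check{D}_1)\cong(\check{Y}_2,\check{D}_2)$ inducing the prescribed map on cohomology.

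Having fixed the complex structure of the pair, the final step is to match the Ricci-flat metrics. The isomorphism of pairs identifies the K\"ahler classes of $\check{\omega}_1$ and $\check{\omega}_2$, and Theorem~\ref{uniqueness modulo aut} then guarantees that the Ricci-flat representative in a given class is unique up to a fibre-preserving biholomorphism homotopic to the identity, i.e.\ a translation by a holomorphic section; crucially such a translation leaves $\check{\Omega}$ unchanged. Composing the pair-isomorphism with the appropriate translation yields a biholomorphism $\check{X}_2\to\check{X}_1$ pulling $\check{\omega}_1$ back to $\check{\omega}_2$ and $\check{\Omega}_1$ back to $\check{\Omega}_2$; reversing the hyperK\"ahler rotation \eqref{HK rel} converts this into the desired $f:X_2\to X_1$ with $f^*\omega_1=\omega_2$ and $f^*\Omega_1=\Omega_2$.

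The main obstacle is the dimension mismatch flagged after Theorem~\ref{uniqueness modulo aut}: the moduli $\check{\mathcal{K}}_{CY}$ of complete Ricci-flat K\"ahler forms on $\check{X}_i$ is one dimension larger than the image of the K\"ahler cone of the compactification $\check{Y}_i$, so an isomorphism of the pairs $(\check{Y}_i,\check{D}_i)$ does not automatically preserve the K\"ahler class in $H^2(\check{X}_i,\mathbb{R})$. One must track the extra direction, which comes from the pairing with the bad cycle $[C]$ (equivalently the $b_0$-parameter of the non-standard semi-flat model), and verify that it too is matched by $F$ before the uniqueness theorem can be invoked. A second delicate point is that Theorem~\ref{uniqueness modulo aut} requires only polynomial decay to the semi-flat model, and this is exactly what lets the correcting section be absorbed; arranging that the asymptotics of $\check{\omega}_i$ fall in this polynomial regime, rather than demanding exponential decay that may be unavailable after translation, is what makes the argument close.
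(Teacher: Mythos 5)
Your proposal follows the paper's argument essentially step for step: hyperK\"ahler rotation and compactification via Theorem~\ref{uniformization}, extension of $F$ across the $I_d$-fibre using the Friedman--Morgan lemma and the mapping class group of the Calabi model, the Gross--Hacking--Keel Torelli theorem (Theorem~\ref{Torelli Looijenga}) to identify the pairs $(\check{Y}_i,\check{D}_i)$, then Theorem~\ref{uniqueness modulo aut} to pin down the metric up to translation by a holomorphic section (which fixes $\check{\Omega}$), and finally reversing the rotation. You even flag the same caveat the paper records in its footnote, namely that the K\"ahler cone of $\check{X}_i$ is one dimension larger than the restriction from $\check{Y}_i$, so the pair-isomorphism must be checked to preserve the K\"ahler class via the pairing with the bad cycle.
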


There are a few corollaries of this Torelli theorem worth mentioning. First of all, we partially answer a question raised by Tian-Yau \cite{TY} in the $2$-dimensional case.
 \begin{cor}
 	 Let $Y$ be a weak del Pezzo surface, $D$ be a smooth anti-canonical divisor and $X=Y\setminus D$. Fix a meromorphic $2$-form $\Omega$ on $Y$ with a simple pole along $D$. For each K\"ahler class of $X$, there exists a unique hyperK\"ahler metric $\omega$ in the given K\"ahler class with $L^2$ curvature. 
 \end{cor}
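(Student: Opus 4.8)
The plan is to deduce both existence and uniqueness from the hyperK\"ahler rotation correspondence together with the Torelli and uniqueness theorems established above. The starting observation is purely cohomological: under the rotation \eqref{HK rel}, a Ricci-flat K\"ahler form $\omega$ on $X$ with $2\omega^2=\Omega\wedge\bar\Omega$ is sent to $(\check X,\check\omega,\check\Omega)$ with $[\check\omega]=\mathrm{Re}[\Omega]$ and $[\check\Omega]=[\omega]-i\,\mathrm{Im}[\Omega]$. Thus the K\"ahler class $[\omega]=\kappa$ that we wish to prescribe on $X$ is precisely the real part of the \emph{period} $\mathrm{Re}[\check\Omega]$ of the mirror rational elliptic surface, while the class $\mathrm{Re}[\Omega]$ stays fixed on $\check X$ as $\kappa$ varies. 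This reduces the problem to a statement about prescribing periods and K\"ahler classes on the rational elliptic surface side, where our tools apply.

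For existence, given a K\"ahler class $\kappa$ on $X$ I would first use the Torelli theorem for Looijenga pairs (Theorem~\ref{Torelli Looijenga}) to produce a rational elliptic surface $\check Y$ with an $I_d$-fibre $\check D$ whose period realizes $\kappa-i\,\mathrm{Im}[\Omega]$; here one must check that this homomorphism lies in the correct deformation family and that $\mathrm{Re}[\Omega]$ is a K\"ahler class on $\check X=\check Y\setminus\check D$ pairing to zero with the bad cycle. Hein's theorem (Theorem~\ref{thm: hein}), or its generalization in Theorem~\ref{generalized Hein's metric}, then supplies a complete Ricci-flat metric $\check\omega$ on $\check X$ in the class $\mathrm{Re}[\Omega]$ asymptotic to a (possibly non-standard) semi-flat metric; being a gravitational instanton it automatically has $L^2$ curvature. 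Reversing the hyperK\"ahler rotation produces a Ricci-flat metric $\omega$ on the underlying manifold in the class $\kappa$. The point that needs care is consistency: one must verify that the reverse rotation returns the \emph{original} complex structure $(Y,D)$ and the fixed form $\Omega$, rather than some other del Pezzo pair. This is arranged exactly as in the mirror symmetry construction, by matching the periods of the two rational elliptic surfaces relative to their $I_d$-fibres and invoking Theorem~\ref{Torelli Looijenga} once more, so that the two hyperK\"ahler rotations compose to the identity on complex structures.

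For uniqueness, suppose $\omega_1,\omega_2$ are Ricci-flat K\"ahler metrics with $L^2$ curvature in the same class $\kappa$; since $\Omega$ is fixed both satisfy $2\omega_i^2=\Omega\wedge\bar\Omega$, so $(X,\omega_i,\Omega)$ are gravitational instantons, and one checks they are of type $ALH^*$ (their model at infinity is the Calabi ansatz), whence by Theorem~\ref{uniformization} their rotations compactify to rational elliptic surfaces. Applying the Torelli theorem for $ALH^*$ gravitational instantons (Theorem~\ref{Torelli ALH*}) with $F=\mathrm{id}$, and using $[\omega_1]=[\omega_2]$ together with $[\Omega_1]=[\Omega_2]$, yields a diffeomorphism $f:X\to X$ with $f^*\omega_1=\omega_2$ and $f^*\Omega=\Omega$. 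Since on a complex surface a nowhere-vanishing holomorphic two-form determines the complex structure, $f^*\Omega=\Omega$ forces $f$ to be a biholomorphism of $X$ homotopic to the identity.

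The main obstacle is to upgrade this uniqueness \emph{up to a diffeomorphism} to strict uniqueness $\omega_1=\omega_2$, and this is where the failure of the $\ddb$-lemma on the non-Stein manifold $X$ intervenes: two Ricci-flat metrics in the same de Rham class need not lie in the same Bott-Chern class, the discrepancy being governed by $\mathrm{Aut}_0$. The resolution exploits an asymmetry between the two mirror pictures. On the elliptic-surface side $\check X$ the abundant automorphisms are the fibrewise translations, but under hyperK\"ahler rotation these become translations along the special Lagrangian fibres of $X$, which are symplectic but not holomorphic with respect to the fixed complex structure, and hence do not preserve $\Omega$. Consequently a biholomorphism $f$ of $X$ preserving $\Omega$ and homotopic to the identity cannot be such a translation; I expect that such an $f$ must in fact be trivial (equivalently, $X$ carries no nonzero holomorphic vector field $V$ with $\mathcal{L}_V\Omega=0$, which can be read off from the structure of $\mathrm{Aut}(Y,D)$ for a weak del Pezzo pair). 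Granting this, $\omega_1$ and $\omega_2$ share the same Bott-Chern class, so $\omega_2=\omega_1+\ddb\phi$ for a bounded potential $\phi$, and the Monge-Amp\`ere argument of Theorem~\ref{uniqueness thm} forces $\phi$ to be constant and hence $\omega_1=\omega_2$. Verifying the triviality of these automorphisms, uniformly over all smooth anti-canonical $D$, is the step I expect to be the most delicate.
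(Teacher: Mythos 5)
Your uniqueness argument follows the paper's route: a complete hyperK\"ahler metric with $L^2$ curvature makes $(X,\omega,\Omega)$ a gravitational instanton by \cite{SZ}, one identifies its type as $ALH^*$, and Theorem~\ref{Torelli ALH*} applied with $F=\mathrm{id}$ finishes the argument. (The paper stops exactly there, reading ``unique'' as the conclusion of the Torelli theorem; your further observation that $f^*\omega_1=\omega_2$ is a priori only an isometry statement is a fair reading of the literal wording, but you leave the key claim --- that a biholomorphism of $X$ preserving $\Omega$ and homotopic to the identity must be trivial --- as an unverified expectation, so this refinement is not actually established in your write-up either. Note also that your identification of the type as $ALH^*$ ``because the model at infinity is the Calabi ansatz'' assumes what needs to be deduced from the Sun--Zhang classification; the paper instead argues via the finiteness of possible diffeomorphism types.)

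The genuine gap is in your existence argument. The paper obtains existence in one line from \cite[Theorem 5.1]{TY}: the Tian--Yau construction already produces a complete Ricci-flat K\"ahler metric asymptotic to the Calabi ansatz in each K\"ahler class of $X=Y\setminus D$, with no mirror machinery at all; the new content of the corollary is the uniqueness. Your route --- build an abstract rational elliptic surface with prescribed period via Theorem~\ref{Torelli Looijenga}, endow it with a Hein metric in the class $\mathrm{Re}[\Omega]$, and rotate back --- founders on precisely the step you flag as ``needing care.'' Matching periods only identifies the reverse rotation with $(Y,D,\Omega)$ \emph{after} you know that this reverse rotation compactifies to a weak del Pezzo pair in the same deformation family; what Theorem~\ref{SYZ RES} provides at this stage is only that a suitable rotation of $\check X$ compactifies to another \emph{rational elliptic surface}. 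The statement you actually need is essentially the surjectivity of the hyperK\"ahler rotation map $\mathscr{HK}$, i.e.\ Theorem~\ref{another compactification}, whose proof in this paper comes after the corollary and itself relies on Theorem~\ref{Torelli ALH*} together with an open--closed argument on moduli. As written, your existence half is therefore both a large detour and logically circular. Replace it with the direct citation of Tian--Yau.
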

 \begin{proof}
 	The existence is guaranteed by \cite[Theorem 5.1]{TY}. Conversely, if there exists one such hyperK\"ahler metric $\omega$, then $(X,\omega,\Omega)$ is a gravitational instanton by \cite{SZ}. Since there are only finitely many possible diffeomorphism types of gravitational instantons, $(X,\omega,\Omega)$ must be an $ALH^*$-gravitational instanton. Then the corollary follows from Theorem \ref{Torelli ALH*}. 
 \end{proof}	
  In Theorem \ref{CJL main thm} and with the notation there, given any primitive class in $H_1(D,\mathbb{Z})$, there is a corresponding SYZ fibration in $X$. The monodromy of moduli of pairs $(Y,D)$ acts on $H_2(X,\mathbb{Z})$ and in particular induces an action on $\mbox{Im}(H_1(D,\mathbb{Z})\rightarrow H_2(X,\mathbb{Z}))\cong \mathbb{Z}^2$. A direct application of the Torelli theorem is to answer a question asked in \cite{HK}.
  \begin{prop}[Lau-Lee-L., \cite{LLL}]
  	 For any monodromy in $\mbox{Aut}(H_2(X,\mathbb{Z}))$, there exists an isometry of $X$ such that its induced map on $H_2(X,\mathbb{Z})$ realizes the given monodromy and sends special Lagrangian fibration to special Lagrangian fibraitons. In particular, the symplectic/complex affine structure of the different SYZ fibrations are isomorphic. 
  \end{prop}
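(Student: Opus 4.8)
The plan is to realize the given monodromy by a diffeomorphism of $X$ and then to promote it to an isometry by feeding it into the Torelli theorem for $ALH^*$ gravitational instantons (Theorem~\ref{Torelli ALH*}). First I would fix the exact Tian-Yau metric $\omega_{TY}$ and the holomorphic volume form $\Omega$ on $X=Y\setminus D$, so that $(X,\omega_{TY},\Omega)$ is an $ALH^*$ gravitational instanton by \cite{SZ} and Theorem~\ref{uniformization}. A monodromy $g\in\mathrm{Aut}(H_2(X,\mathbb{Z}))$ comes from a loop in the moduli of pairs $(Y,D)$, so Ehresmann's theorem produces a diffeomorphism $F\colon X\to X$ with $F_*=g$. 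This $F$ need not be an isometry, but it preserves the $ALH^*$ structure cohomologically up to the rotation permitted by the hyperK\"ahler sphere: the Tian-Yau metric is exact, so $[\omega_{TY}]=0$ is automatically $g$-invariant, while the action of $g$ on $\mathrm{Im}(H_1(D,\mathbb{Z})\to H_2(X,\mathbb{Z}))\cong\mathbb{Z}^2$ is by an isometric rotation of the period plane $\mathrm{span}([\mathrm{Re}\,\Omega],[\mathrm{Im}\,\Omega])$, i.e. it carries $[\Omega]$ to $[\Omega_\theta]$ for a hyperK\"ahler rotation $\Omega_\theta$ of the same metric. The fact that $g$ respects the intersection form and the period plane is precisely the feature that makes it a candidate for realization by an isometry.

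Next I would run the argument behind Theorem~\ref{Torelli ALH*} on the pair $(X,\omega_{TY},\Omega)$ and its pullback $(X,F^*\omega_{TY},F^*\Omega)$, which is again $ALH^*$ because pullback by a diffeomorphism preserves the decay estimates \eqref{ALH* defn}. After the hyperK\"ahler rotation by $\theta$ the identity map matches their cohomology classes, so compactifying both to rational elliptic surfaces with an $I_d$ fibre, extending $F$ across the $I_d$ fibre by the Friedman-Morgan lemma \cite{FM}, applying the Gross-Hacking-Keel Torelli theorem~\ref{Torelli Looijenga}, and finishing with the uniqueness result modulo $\mathrm{Aut}_0(\check{X})$ (Theorem~\ref{uniqueness modulo aut}) produces an isometry $f$ of the underlying Riemannian manifold. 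Since $\mathrm{Aut}_0(\check{X})$ consists of fibre-preserving maps homotopic to the identity, $f$ differs from $F$ by a homologically trivial map, and therefore still satisfies $f_*=g$.

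Finally, because $f$ is an isometry intertwining $(\omega_{TY},\Omega)$ with a hyperK\"ahler rotation of itself, it maps special Lagrangians to special Lagrangians and hence carries the SYZ fibration attached to a primitive $\gamma\in H_1(D,\mathbb{Z})$ to the one attached to $g(\gamma)$. As the symplectic and complex affine coordinates are the periods \eqref{symplectic affine} of $\omega_{TY}$ and $\mathrm{Im}\,\Omega$ over the cylinders $C_{\gamma,\phi}$, and $f$ preserves these integrals, the induced map on bases is simultaneously affine for both structures, yielding the asserted isomorphism of the symplectic/complex affine structures of the different SYZ fibrations.

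The hardest part will be the homological bookkeeping in the second step: one must check that the isometry delivered by the Torelli machinery induces \emph{exactly} the prescribed $g$, and not merely some isometry in its neighborhood. This means controlling the $\mathrm{Aut}_0(\check{X})$ ambiguity of Theorem~\ref{uniqueness modulo aut} (verifying that it acts trivially on $H_2$) together with the extension of $F$ across the $I_d$ fibre, and confirming beforehand that $g$ acts on the period plane by an honest metric-preserving rotation, so that a genuine hyperK\"ahler-rotated form $\Omega_\theta$ with $[\Omega_\theta]=g^*[\Omega]$ exists and the hypotheses of Theorem~\ref{Torelli ALH*} are met on the nose.
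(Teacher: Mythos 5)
The survey does not actually prove this proposition: it only records that it is ``a direct application of the Torelli theorem'' and defers to \cite{LLL}, so your proposal can only be measured against that hint and the surrounding machinery. Your architecture (realize the monodromy by a diffeomorphism $F$ via Ehresmann, then upgrade $F$ to an isometry by feeding cohomological data into Theorem~\ref{Torelli ALH*}) is the natural reading of that hint, and your last paragraph correctly points at where the difficulty sits. The problem is that the step you defer --- that $g$ carries $[\Omega]$ to the class $[\Omega_\theta]$ of a hyperK\"ahler rotation of the \emph{same} metric --- is not bookkeeping but the entire content, and in the form you assert it, it fails. Any isometry $f$ of the hyperK\"ahler $4$-manifold $(X,\omega_{TY},\Omega)$ acts on the three-dimensional space of parallel self-dual forms $\mathrm{span}(\omega_{TY},\mathrm{Re}\,\Omega,\mathrm{Im}\,\Omega)$ by a constant orthogonal transformation. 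Since $[\omega_{TY}]_{dR}=0$ while $[\mathrm{Re}\,\Omega]$ and $[\mathrm{Im}\,\Omega]$ are linearly independent in $H^2(X,\mathbb{R})$ (their periods over the two generators $T_{\gamma_1},T_{\gamma_2}$ of $\mathrm{Im}(H_1(D,\mathbb{Z})\to H_2(X,\mathbb{Z}))\cong\mathbb{Z}^2$ form an invertible $2\times 2$ matrix), this forces $f^*\omega_{TY}=\pm\omega_{TY}$ and $f^*\Omega=e^{i\psi}\Omega$ or $e^{i\psi}\overline{\Omega}$. Normalizing the periods of $\Omega$ over $T_{\gamma_1},T_{\gamma_2}$ to be $2\pi i(1,\tau)$ and writing $f_*T_{\gamma_1}=aT_{\gamma_1}+cT_{\gamma_2}$, the case $f^*\Omega=e^{i\psi}\Omega$ gives $a+c\tau=e^{i\psi}$ and similarly $b+d\tau=e^{i\psi}\tau$; for a generic modulus $\tau$ of $D$ this forces $f_*=\pm\mathrm{Id}$ on the $\mathbb{Z}^2$ (the conjugate case is excluded by the determinant). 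So for a generic pair $(Y,D)$, no self-isometry of the fixed Tian--Yau gravitational instanton can induce a monodromy acting nontrivially on $\mathrm{Im}(H_1(D,\mathbb{Z}))$ --- and those are exactly the monodromies needed for the ``in particular'' clause relating the different SYZ fibrations.

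Concretely, your sentence ``after the hyperK\"ahler rotation by $\theta$ the identity map matches their cohomology classes'' cannot be executed: $g^*[\Omega]$ need not lie in $\mathbb{C}[\Omega]+\mathbb{C}[\overline{\Omega}]$ at all (already its periods over $D^\perp$ are not determined by its periods over the $\mathbb{Z}^2$), so there is no hyperK\"ahler rotation of $(X,\omega_{TY},\Omega)$ whose class triple agrees with that of $(X,F^*\omega_{TY},F^*\Omega)$, and Theorem~\ref{Torelli ALH*} has nothing to bite on. Whatever \cite{LLL} actually does must go beyond a single rotation --- for instance by moving within the family of gravitational instantons and comparing the $\gamma$-fibration of one member with the $\gamma'$-fibration of another before identifying the resulting affine manifolds, or by invoking Theorem~\ref{uniqueness modulo aut} across genuinely different semi-flat asymptotics. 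As written, your argument proves the statement only for monodromies fixing $[\Omega]$ up to phase, which excludes the ones the proposition is about. The secondary issue you raise (that the Torelli output $f$ induces exactly $g$ on $H_2$, modulo the $\mathrm{Aut}_0(\check{X})$ ambiguity and the extension over the $I_d$ fibre) is real but subordinate to this.
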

  
  The last application is a different proof of another uniformization theorem of $ALH^*$-gravitational instantons due to Hein-Sun-Viaclovsky-Zhang. 
  \begin{thm}[Hein-Sun-Viaclovsky-Zhang, \cite{HSVZ2}] \label{another compactification}
  	Any $ALH^*$-gravitational instanton $(X,\omega,\Omega)$ can be compactified to a weak del Pezzo surface $Y$ by adding a smooth anti-canonical divisor $D$ such that $\Omega$ is a meromorphic $2$-form on $Y$ with a simple pole along $D$. 
 \end{thm}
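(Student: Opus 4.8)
The plan is to avoid redoing the hard analytic gluing of Hein--Sun--Viaclovsky--Zhang and instead deduce the statement from the compactification of the hyperK\"ahler rotation recorded in Theorem~\ref{uniformization}, combined with the Torelli theorem for Looijenga pairs, the uniqueness theorem for Ricci-flat metrics, and the Enriques--Kodaira classification used in Section~\ref{sec: SYZ in dP}. Concretely, I would first hyperK\"ahler rotate $(X,\omega,\Omega)$ to $(\check X,\check\omega,\check\Omega)$ via \eqref{HK rel} and apply Theorem~\ref{uniformization}, which compactifies $\check X$ to a rational elliptic surface $\check Y$ by adjoining an $I_d$-fibre $\check D$, with $\check\omega$ asymptotic to a possibly non-standard semi-flat metric $\check\omega_{sf,b_0,\epsilon}$. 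Reading off the parameters $(b_0,\epsilon)$ and invoking Theorem~\ref{local model}, these are exactly the data of the hyperK\"ahler rotation of a Calabi ansatz attached to an elliptic curve $D\cong\mathbb{C}/(\mathbb{Z}\oplus\mathbb{Z}\tau)$ of a definite modulus $\tau$; thus the behaviour of $(X,\omega,\Omega)$ near infinity is matched with the Calabi model $X_{\mathcal C}=Y_{\mathcal C}\setminus D$, and this $D$ is the divisor to be glued in at infinity.

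Next I would run the correspondence of Section~\ref{sec: SYZ in dP} in reverse to produce the del Pezzo compactification. Using the Torelli theorem for Looijenga pairs (Theorem~\ref{Torelli Looijenga}) to control the complex structure of $(\check Y,\check D)$, together with the period dictionary underlying Theorem~\ref{CJL main thm}, I would locate a weak del Pezzo pair $(Y,D)$ of degree $d$ whose hyperK\"ahler rotation yields a rational elliptic pair with the same periods, so that $\check X$ is biholomorphic to the hyperK\"ahler rotation of $Y\setminus D$. To identify the metrics I would realize the prescribed de Rham class and asymptotics of $\check\omega$ inside the family of Theorem~\ref{generalized Hein's metric}, which is precisely the point at which one must construct an ambient K\"ahler form $\check\omega_0$ with $[\check\omega_0]_{dR}\cdot[F]=\epsilon$ and $[\check\omega_0]_{dR}\cdot[C]=2b_0/\epsilon$, and then apply the uniqueness theorem (Theorem~\ref{uniqueness modulo aut}) to conclude that $\check\omega$ agrees, up to an element of $\mathrm{Aut}_0(\check X)$, with the hyperK\"ahler rotation of the complete Ricci-flat metric on $Y\setminus D$. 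Reversing the rotation then identifies $(X,\omega,\Omega)$ with $(Y\setminus D,\omega_{CY},\Omega)$; in particular $X=Y\setminus D$, and the Calabi normal form $\Omega_{\mathcal C}=p^*\Omega_D\wedge\frac{dw}{w}$ shows that $\Omega$ extends meromorphically with a simple pole along $D$.

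Finally, to see that $Y$ is a weak del Pezzo surface I would repeat the Enriques--Kodaira argument of Section~\ref{sec: SYZ in dP}: the simple pole of $\Omega$ gives $K_Y=\mathcal O_Y(-D)$ with $D$ a smooth elliptic curve, a Mayer--Vietoris computation gives $b_1(Y)=0$, the classification rules out the minimal possibilities (Enriques, $K3$, properly elliptic) since $-K_Y$ is effective, and $(D+E)^2>0$ for any $(-1)$-curve $E$ forces $Y$ to be projective; Castelnuovo's criterion then yields rationality, while $-K_Y=D$ is nef and big because $D^2=d>0$. I expect the genuine obstacle to be the metric-matching step: the moduli space $\check{\mathcal K}_{CY}$ is one dimension larger than the K\"ahler classes restricted from $\check Y$, so one must argue carefully that the period matching produced by Torelli is compatible with the K\"ahler class, and correspondingly that the map furnished by Theorem~\ref{uniqueness modulo aut} is a genuine fibre-preserving biholomorphism isotopic to the identity rather than merely a diffeomorphism.
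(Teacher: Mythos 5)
Your outline assembles the right ingredients (Theorem~\ref{uniformization}, Theorem~\ref{local model}, Theorem~\ref{Torelli Looijenga}, Tian--Yau existence, Theorem~\ref{uniqueness modulo aut}, and reversal of the rotation), and the overall strategy --- rotate, compactify to a rational elliptic surface, match with the rotation of a Tian--Yau space, rotate back --- is indeed the paper's. But there is a genuine gap at the pivotal step: you write that you would ``locate a weak del Pezzo pair $(Y,D)$ of degree $d$ whose hyperK\"ahler rotation yields a rational elliptic pair with the same periods,'' citing Theorem~\ref{Torelli Looijenga} and the period dictionary. Neither Torelli theorem delivers this. Theorem~\ref{Torelli Looijenga} says that periods determine a rational elliptic pair and that every homomorphism in $\mbox{Hom}(\check{D}^{\perp},\mathbb{C}^*)$ is realized by \emph{some rational elliptic pair}; it says nothing about whether a given quadruple $(\check{Y},\check{D},[\check{\omega}],\check{c})$ --- periods \emph{together with} a K\"ahler class and scaling --- lies in the image of the hyperK\"ahler rotation of a del Pezzo complement. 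Concretely, to build the candidate $(Y,D)$ you need $[\Omega]=[\check{\omega}]-i\,\mbox{Im}[\check{\Omega}]$ to be realized by a del Pezzo pair \emph{and} $\mbox{Re}[\check{\Omega}]$ to be a K\"ahler class on $Y\setminus D$ admitting a Tian--Yau metric; there is no a priori reason for the latter, and this is exactly the asserted surjectivity you would be assuming.

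The paper's proof is organized precisely around closing this gap: it packages the data into spaces $\mathscr{A}$ (del Pezzo side) and $\mathscr{B}$ (rational elliptic side), defines the continuous hyperK\"ahler rotation map $\mathscr{HK}:\mathscr{A}\rightarrow\mathscr{B}$, and proves $\mathscr{HK}$ is surjective by showing its image is both open (openness of the $j$-invariant on the del Pezzo moduli, Friedman's residue computation of periods, and Tian--Yau existence in all nearby K\"ahler classes, combined with Theorem~\ref{Torelli Looijenga}) and closed (recovering the limiting del Pezzo pair from convergence of the periods of $[\check{\omega}_i-i\,\mbox{Im}\check{\Omega}_i]$ via McMullen's theorem), hence all of the connected space $\mathscr{B}$. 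Only after surjectivity is the Torelli theorem for $ALH^*$ gravitational instantons (equivalently, your use of Theorem~\ref{uniqueness modulo aut} plus reversal of the rotation) invoked to identify $(X,\omega,\Omega)$ with the Tian--Yau model. You correctly sense a dimension issue, but you locate the difficulty in the $\mbox{Aut}_0$/metric-matching step, which the paper disposes of quickly; the extra dimension of $\check{\mathcal{K}}_{CY}$ (the pairing with the bad cycle, i.e.\ the parameter $b_0$) is absorbed by the non-standard semi-flat metrics and Theorem~\ref{local model}, while the real work is the global open--closed argument. Finally, your closing Enriques--Kodaira step is redundant in this scheme: if you have genuinely located a weak del Pezzo pair, there is nothing left to classify.
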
	
Before we give the proof, a dimension count for the moduli space of hyperK\"ahler triples from different geometries is a good sanity check. On the one hand, the moduli space of rational elliptic surfaces with an $I_d$-fibre is of complex dimension $9-d$, the meromorphic $2$-form with a simple pole on the given fibre is unique up to a $\mathbb{C}^*$-scaling, the K\"ahler cone of the complement is of real dimension $11-d$, and thus the total real dimension is $31-3d$. On the other hand, the moduli space of pairs of weak del Pezzo surfaces with a smooth anti-canonical divisor is of complex dimension $10-d$, the meromorphic volume with a simple pole along the given smooth anti-canonical divisor is unique up to $\mathbb{C}^*$-scaling, the K\"ahler cone of the complement is of real dimension $9-d$,\footnote{This is due to the long exact sequence of pairs analogous to \eqref{les} and the fact that the Calabi ansatz is exact near infinity.} and thus the total real dimension $31-3d$ again. 

 \begin{proof}[ Proof of Theorem \ref{another compactification}] 
 	 Consider the space $\mathscr{A}$ of $5$-tuples $(Y,D,[\omega],\alpha,c)$, where 
 	 \begin{enumerate}
 	 	\item $Y$ is a weak del Pezzo surface of degree $d$ in the deformation family of $Y_0$.
 	 	\item $D\in |-K_Y|$ is a smooth anti-canonical divisor, 
 	 	\item $[\omega]\in H^2(Y\setminus D,\mathbb{R})$,
 	 	\item $\alpha \in H_1(D,\mathbb{Z})$ is a primitive class, 
 	 	\item $c\in \mathbb{R}_+$.
 	 \end{enumerate} We identify two elements $(Y,D,[\omega],\alpha,c)$, $(Y',D',[\omega'],\alpha',c)$ if there exists an isomorphism of pairs $f:(Y,D)\rightarrow (Y',D')$ such that 
     \begin{align*}
       f_*\alpha=\alpha', f^*[\omega']=[\omega], c=c'.
     \end{align*}
     There exists a natural topology on $\mathscr{A}$. Similarly we will consider  $\mathscr{B}$ be the space of $4$-tuples of the form $(\check{Y},\check{D}, [\check{\omega}],\check{c})$, where 
     \begin{enumerate}
     	\item $\check{Y}$ is a rational elliptic surface,
     	\item $\check{D}$ is an $I_d$-fibre of $\check{Y}$,
     	\item $[\check{\omega}]\in H^2(\check{Y}\setminus \check{D},\mathbb{R})$ is a K\"ahler class, 
     	\item $\check{c}\in \mathbb{R}_+$. 
     \end{enumerate} We identify elements in $\mathscr{B}$ similarly, and there is a natural topology on $\mathscr{B}$. Given a $4$-tuple $(\check{Y},\check{D},[\check{\omega}],\check{c})$, there exists a unique meromorphic $2$-form $\check{\Omega}$ on $\check{Y}$ with a simple pole along $\check{D}$ normalized so that its integral over the bad cycle is $\check{c}$. For $\check{c}$ large enough, there exists a Ricci-flat metric $\check{\omega}$ in the cohomology class $[\check{\omega}]$ asymptotic to $\check{c}\check{\omega}_{sf}$ from the construction of Hein and unique up to translation by Theorem \ref{uniqueness modulo aut}. In particular, $2\check{\omega}^2=\check{\Omega}\wedge \bar{\check{\Omega}}$. Later from the proof below, one would see that the constraint of $\check{c}$ being large can be removed.  
 
 	Given an element $(Y,D,[\omega],\alpha,c)\in \mathscr{A}$, there exists a unique scaling of the Tian-Yau metric $\omega$ which is asymptotic to $c\omega_{\mathcal{C}}$ by \cite[Theorem 5.1]{TY}. From Theorem \ref{CJL main thm}, there exists an SYZ fibration in $X$ with fibre class corresponding to $\alpha$. The hyperK\"ahler rotation of $X$ via \eqref{HK rel} can be compactified to a rational elliptic surface $\check{Y}$ by adding an $I_d$-fibre $\check{D}$ with a Ricci-flat metric $\check{\omega}$ asymptotic to some scaling of a (possibly non-standard) semi-flat metric.  To sum up, there exists a continuous map 
	\[
	\mathscr{HK}:\mathscr{A}\rightarrow \mathscr{B}.
	\]
	We remark, after hyperK\"ahler rotation, the homology class in $H_2(X,\mathbb{Z})$ corresponding to $\beta\in H_1(D,\mathbb{Z})$ such that $\langle \alpha,\beta\rangle=1$ becomes precisely the bad cycle of $(\check{Y},\check{D})$ up to a multiple of the fibre. This information is helpful for the normalizing the $(2,0)$-form to apply Theorem \ref{Torelli Looijenga}. 
 	
 	We claim that the hyperK\"ahler rotation map $\mathscr{HK}$ is surjective. We will first prove the theorem assuming the claim.
 	Take an $ALH^*$-gravitational instanton $(X,\omega,\Omega)$ and a choice of homology class of special Lagrangian $[L]\in H_2(X,\mathbb{Z})$. By adjusting the phase of $\Omega$, we may assume that $\int_{[L]}\Omega>0$. Then Theorem \ref{uniformization} gives an element in $\mathscr{B}$, which coincides with \\$\mathscr{HK}(Y',D',\alpha',\beta',c')$. In particular, we have biholomorphisms 
 	 \begin{align*}
 	    \check{X}\cong (\underline{X},J), \check{X}\cong (\underline{X'},J').
 	 \end{align*} where $J$ (and $J'$) denote the complex structure determined by the $2$-form $\omega-i\mbox{Im}\Omega$ (and $\omega'-\mbox{Im}\Omega'$ respectively). Notice that there exists a unique meromorphic $2$-form on $\check{Y}$ with a simple pole along $\check{D}$ up to $\mathbb{C}^*$-scaling. 	 
 	 Using the normalization in the previous paragraph and the fixed scaling $\check{c}$, $[\Omega]=[\Omega']$ and $[\omega]=[\omega']$ via the biholomorphism from the surjectivity assumption.  	
 	Then from Theorem \ref{Torelli ALH*}, we have an isometry of gravitational instantons $(X',\omega',\Omega')\cong (X,\omega,\Omega)$. In particular, $X$ can be compactified to a compact complex surface which is biholomorphic to $Y'$. 
 	
 	To prove the claim, it suffices to prove that the image of $\mathscr{HK}$ is both open and closed. We will first prove the openness. Assume $\mathscr{HK}(Y,D,[\omega],\alpha,c)=(\check{Y},\check{D},[\check{\omega}],\check{c})$. Recall that the $j$-invariant defines a non-constant holomorphic function on the moduli space of pairs of weak del Pezzo surfaces with smooth anti-canonical divisors. In particular, it is an open map. Together with \cite[Proposition 3.12]{Fr}, we may achieve all possible nearby period of $\Omega$ by deforming the pair $(Y,D)$, i.e. deforming $D$ and deforming the blow-up loci on $\mathbb{P}^2$. From \cite[Theorem 5.1]{TY}, there exists a Ricci-flat metric asymptotic to the Calabi ansatz in any K\"ahler class $[\omega']$ close enough to $[\omega]$ on a $X'$,\footnote{Here we identify the cohomology class via parallel transport in the deformation family.} where $X'=Y'\setminus D'$ and $(Y',D')$ is a deformation of $(Y,D)$. This implies that after hyperK\"ahler rotation, one can achieve all small deformations of periods of $[\check{\Omega}]$ and $[\check{\omega}]$, where $\check{\Omega}$ is the meromorphic $2$-form on $\check{Y}$ with a simple pole along $\check{D}$. Thus, $\mathscr{HK}$ is open by Theorem \ref{Torelli Looijenga}. Finally, we will prove that $\mathscr{HK}$ is closed. Assume that there is a convergent sequence in $\mathscr{B}$,
 	 \begin{align}\label{convergence}
 	 \mathscr{HK}(Y_i,D_i,[\omega_i],\alpha_i,c_i)=(\check{Y}_i,\check{D}_i,[\check{\omega}_i],\check{c}_i)\rightarrow (\check{Y},\check{D},[\check{\omega}],\check{c}). 
 	 \end{align} Notice that the modulus of $D_i$ can be recovered from the integral of\\ $[\check{\omega}_i-i\mbox{Im}\check{\Omega}_i]$ over the elliptic fibre and the bad cycle of $\check{Y}_i$. From the convergence \eqref{convergence}, we have $D_i\rightarrow D$ for some smooth elliptic curve $D$. The convergence of periods of $[\check{\omega}_i-i\mbox{Im}\check{\Omega}_i]$ on $H_2(\check{X}_i,\mathbb{Z})$ implies there exists a weak del Pezzo surface $Y$ such that $(Y_i,D_i)\rightarrow (Y,D)$ by Theorem \cite[Theorem 6.4]{McM}. Then $[\omega]=[\mbox{Re}\check{\Omega}]$ up to parallel transport, and
 	 $\alpha$ is determined by $\alpha_i$ via parallel transport, and $c=\lim_i c_i$. It is not hard to see that $\mathscr{HK}(Y,D,[\omega],\alpha,c)=(\check{Y},\check{D},[\check{\omega}],\check{c})$, so $\mathscr{HK}$ is closed.

 \end{proof}	
  There are several questions arising naturally from the Torelli theorem. Recall that given an $ALH^*$-gravitational instanton $X$ with a diffeomorphism to a fixed reference $ALH^*$-gravitational instanton $X_0$, the hyperK\"ahler triple of $X$ induces an element in $H^2(X_0,\mathbb{R})^3$ via the diffeomorphism. The map from the space of the $ALH^*$-gravitational instantons to $H^2(X_0,\mathbb{R})^3$ is the analogue of the period map of K3 surfaces and so $H^2(X_0,\mathbb{R}^3)$ is the analogue of the period domain. Theorem \ref{Torelli ALH*} implies the surjectivity of the period map. To understand the space of $ALH^*$-gravitational instantons, one would want to characterize the image of the period domain. This will be studied in the upcoming work \cite{LL}.

  Another question comes from the following observation:
  the different compactifications of $ALH^*$-gravitational instantons after hyperK\"ahler rotations motivate the possibility of different compactifications of other types of gravitational instanton. It has been proven that $ALG$ and $ALG^*$-gravitational instantons can be compactified to rational elliptic surfaces after suitable hyperK\"ahler rotation \cite{CC}\cite{CV}. 
  \begin{conj}
  	 Do $ALG$ or $ALG^*$-gravitational instanton (up to suitable hyperK\"ahler rotation) admit compactifications (obtained by adding a canonical cycle) to rational surfaces other than the rational elliptic surface?
     \end{conj}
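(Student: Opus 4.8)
The plan is to transport the hyperK\"ahler rotation strategy behind Theorem~\ref{another compactification} to the $ALG$ and $ALG^*$ settings, interpreting ``canonical cycle'' as an anti-canonical divisor of arithmetic genus one. Recall that for $ALH^*$ instantons the two compactifications -- to a rational elliptic surface by an $I_d$ fibre and to a weak del Pezzo by a smooth anti-canonical elliptic curve -- are the images of the holomorphic and the special Lagrangian descriptions of one and the same torus fibration, and that Theorem~\ref{local model} identifies the rotation \eqref{HK rel} of the Calabi ansatz over an elliptic curve with the (possibly non-standard) semi-flat model. The analogous ``other'' compactification of an $ALG$ or $ALG^*$ instanton should therefore be visible in the complex structure in which its special Lagrangian torus fibration is \emph{not} holomorphic, but the asymptotic geometry is instead a Calabi-type disc bundle over a genus-one base. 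First I would start with the instanton in the complex structure exhibiting it as the complement of a finite-monodromy (resp.\ $I_d^*$) fibre in a rational elliptic surface, select the special Lagrangian fibration supplied by the $ALG/ALG^*$ analogue of Theorem~\ref{SYZ RES}, and pass to the rotated structure $\check{X}'$ singled out by this Calabi-type description.

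The decisive new feature is the cone angle $2\pi\beta$ of the $ALG$ base (with $\beta\in\{\tfrac16,\tfrac14,\tfrac13,\tfrac12,\tfrac23,\tfrac34,\tfrac56\}$, and an additional order-two monodromy for $ALG^*$). Whereas the $ALH^*$ end rotates to a Calabi ansatz over a \emph{smooth} elliptic curve, I expect the $ALG/ALG^*$ end to rotate to a Calabi ansatz over a genus-one \emph{orbifold} curve, namely a flat two-orbifold $E/(\mathbb Z/k)$, and for $ALG^*$ the pillowcase $E/\{\pm 1\}\cong\mathbb P^1$ with four $\mathbb Z/2$ points. I would establish an orbifold refinement of Theorem~\ref{local model}, matching the rotated semi-flat metric $\check{\omega}_{sf,b_0,\epsilon}$ to the Calabi ansatz on a punctured orbifold line bundle, with the cone angle recording the order of the isotropy at the apex. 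This singles out the candidate: a log del Pezzo surface $Y'$ (a rational surface with cyclic quotient singularities) together with an orbifold anti-canonical genus-one curve $D'$, so that $\check X'=Y'\setminus D'$; since $Y'$ is rational but carries no elliptic fibration through these fibres, it is distinct from the rational elliptic model.

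To globalize I would run the open-closed moduli argument from the proof of Theorem~\ref{another compactification}. Introduce the space $\mathscr{B}'$ of rational elliptic surfaces carrying the relevant singular fibre together with K\"ahler class, period and scaling data (equivalently, $ALG/ALG^*$ instantons), the space $\mathscr{A}'$ of log del Pezzo pairs $(Y',D')$ with the prescribed orbifold structure, and the hyperK\"ahler rotation map $\mathscr{HK}':\mathscr{A}'\to\mathscr{B}'$. Surjectivity of $\mathscr{HK}'$ would then follow, as before, from openness (the $j$-invariant of $D'$ together with the orbifold version of the residue computation of \cite{Fr}) and closedness (a convergence argument anchored by the Torelli theorem for the appropriate Looijenga pairs), while Theorem~\ref{uniqueness modulo aut} -- which requires only polynomial decay -- pins down the Ricci-flat representative in each class. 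Combined with the known compactification to rational elliptic surfaces, this exhibits every $ALG/ALG^*$ instanton, after a suitable rotation, as the complement of the anti-canonical curve $D'$ in $Y'$, answering the conjecture affirmatively.

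The hard part will be the analysis at the orbifold ends. One must extend the Tian-Yau and Hein existence and decay estimates, and the uniqueness Theorem~\ref{uniqueness thm}, to the log del Pezzo setting, where the Calabi ansatz now lives over an orbifold base and degenerates at the quotient singularities; verifying that the Ricci-flat metric extends as an orbifold metric and that $\check\Omega$ acquires the correct simple pole along $D'$ is delicate precisely at the cone apices. For $ALG^*$ the order-two monodromy is the crux: the base is $E/\{\pm 1\}$, the boundary $D'$ carries an involution, and the construction must be carried out $\mathbb Z/2$-equivariantly -- or realized on a branched double cover producing the $I_{2d}$ model of an $ALH^*$ space and then descended. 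This equivariant descent is the step most likely either to force unavoidable quotient singularities or to reveal a genuine obstruction separating the $ALG^*$ case from the $ALG$ case.
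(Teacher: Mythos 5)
Note first that the paper does not prove this statement at all: it is posed as an open question, and the only assertion made is that the $ALG$ case holds by the work in progress \cite{CL}. Your text can therefore only be judged as a research program, and as such its very first step fails for a reason the paper itself records. You propose to start from ``the special Lagrangian fibration supplied by the $ALG/ALG^*$ analogue of Theorem~\ref{SYZ RES}'' and then hyperK\"ahler rotate it. No such analogue exists: by the remark following Theorem~\ref{SLag fibration RES}, if $\check{D}$ is a singular fibre of any type other than $I_d$ --- and the $ALG$ case corresponds exactly to the finite-monodromy fibres, the $ALG^*$ case to $I_d^*$ fibres --- then the second homology of a neighborhood of infinity of $\check{X}$ is generated by the elliptic fibre up to torsion. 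Since that fibre is holomorphic, every available $2$-cycle near infinity has strictly positive symplectic area, so $\check{X}$ admits no special Lagrangian torus fibration near infinity with respect to Hein's metrics. Your rotation strategy therefore has nothing to rotate.

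The second, independent failure is the proposed asymptotic model. You want an ``orbifold refinement of Theorem~\ref{local model}'' matching the rotated $ALG/ALG^*$ end with a Calabi-type ansatz over a genus-one orbifold. This is impossible on purely metric grounds: hyperK\"ahler rotation changes the complex structure but leaves the underlying Riemannian metric untouched, so volume growth is rotation-invariant; $ALG$ and $ALG^*$ instantons grow like $r^2$, while the Calabi ansatz over any one-dimensional base, orbifold or not, grows like $r^{4/3}$ (the case $n=2$ of \eqref{eq: geomTY}). For the same reason a smooth (orbifold) genus-one compactifying divisor is ruled out, and this points to a misreading of the statement: ``canonical cycle'' is meant in the sense of Looijenga \cite{Lo} and Gross--Hacking--Keel \cite{GHK}, i.e.\ a cycle of rational curves, so the conjecture asks for compactifications to Looijenga-type pairs, not to log del Pezzo surfaces with smooth orbifold elliptic boundary. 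What is salvageable from your proposal is only its general shape --- hyperK\"ahler rotation combined with an open-closed moduli argument as in the proof of Theorem~\ref{another compactification}, with Theorem~\ref{uniqueness modulo aut} pinning down representatives --- but the local model at infinity, the compactifying divisor, and the mechanism replacing the nonexistent special Lagrangian fibration would all have to be different from what you describe.
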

     For instance, the conjecture holds for the $ALG$-gravitational instantons after suitable hyperK\"ahler rotations \cite{CL}. 

\subsection{Sympectomorphisms}
  Let $Y$ be a del Pezzo surface of degree $d$, $D\subseteq Y$ be a smooth anti-canonical divisor, and $(\check{Y},\check{D})$ be the rational elliptic surface pair in Theorem \ref{CJL main thm}. Then from \eqref{HK rel}, any automorphism of the pair $(\check{Y},\check{D})$ that preserves the meromorphic volume form of $\check{Y}$ with a simple pole along $\check{D}$ gives rise to a symplectomorphism of $X=Y\setminus D$ with respect to the Tian-Yau metric \cite[Proposition 5.10]{CJL2}.  
  Moreover, we have the following observation as a direct corollary of Theorem \ref{uniqueness thm}: 
   \begin{prop}\label{HK sym}
   	In the above setting, if $\phi:(\check{Y},\check{D})\cong (\check{Y},\check{D})$ is an isomorphism of pairs such that
   	 \begin{enumerate}
   	 	\item 	 $\phi$ preserves the meromorphic volume form of $\check{Y}$ with simple pole along $\check{D}$ and 
   	 	\item $\phi$ preserves the K\"ahler class of $\check{X}=\check{Y}\setminus \check{D}$ from  the hyperK\"ahler rotation, 
   	 \end{enumerate}
   	then the same underlying map before the hyperK\"ahler rotation, still denoted by $\phi$, is an automorphism of the pair $(Y,D)$. 
   \end{prop}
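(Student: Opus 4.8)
The plan is to reduce the statement to the uniqueness theorem, Theorem~\ref{uniqueness thm}, and then transport the resulting rigidity back through the hyperK\"ahler rotation \eqref{HK rel}. Recall that \eqref{HK rel} reads $\check{\omega}=\mbox{Re}\,\Omega$ and $\check{\Omega}=\omega-i\,\mbox{Im}\,\Omega$; inverting these relations expresses the original Calabi--Yau data as $\omega=\mbox{Re}\,\check{\Omega}$ and $\Omega=\check{\omega}-i\,\mbox{Im}\,\check{\Omega}$. By hypothesis $(1)$ we have $\phi^{*}\check{\Omega}=\check{\Omega}$, so taking real and imaginary parts immediately gives $\phi^{*}\omega=\omega$; this already recovers the symplectomorphism statement of \cite[Proposition 5.10]{CJL2}. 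Since then $\phi^{*}\Omega=\phi^{*}\check{\omega}-i\,\mbox{Im}\,\check{\Omega}$, to prove that $\phi$ also preserves $\Omega$---and hence the complex structure $J$ on $X$---it suffices to establish the single identity $\phi^{*}\check{\omega}=\check{\omega}$.

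First I would check that $\phi^{*}\check{\omega}$ is an admissible competitor for Theorem~\ref{uniqueness thm}. Because $\phi$ is a biholomorphism of $(\check{Y},\check{D})$ it is $\check{J}$-holomorphic, so $\phi^{*}\check{\omega}$ is a closed positive $(1,1)$-form; and since $\phi^{*}\check{\Omega}=\check{\Omega}$, pulling back the Monge--Amp\`ere identity gives $(\phi^{*}\check{\omega})^{2}=\alpha^{2}\,\check{\Omega}\wedge\overline{\check{\Omega}}$, so $\phi^{*}\check{\omega}$ is hyperK\"ahler with the same normalization as $\check{\omega}$. Hypothesis $(2)$ gives $[\phi^{*}\check{\omega}]_{dR}=\phi^{*}[\check{\omega}]_{dR}=[\check{\omega}]_{dR}$, supplying conditions $(1)$ and $(2)$ of Theorem~\ref{uniqueness thm}.

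The remaining point, which I expect to be the main obstacle, is condition $(3)$ of Theorem~\ref{uniqueness thm}: that $\phi^{*}\check{\omega}$ is polynomially asymptotic to a semi-flat model with the \emph{same} parameters $(b_{0},\epsilon)$ as $\check{\omega}$. Here I would use that an automorphism of $\check{Y}$ preserves $-K_{\check{Y}}$, hence the anticanonical elliptic fibration, while fixing the fibre $\check{D}$; thus near infinity $\phi$ acts by a biholomorphism of the base disc fixing the puncture together with a fibrewise map, and so carries a (possibly non-standard) semi-flat metric to a semi-flat metric of the same type, altering only the local section and distorting the distance function by a bounded factor. Consequently $\phi^{*}\check{\omega}$ inherits the bound \eqref{poly decay} with model $\phi^{*}\check{\omega}_{sf,\sigma,b_{0},\epsilon}$. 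Since the parameters $(b_{0},\epsilon)$ are determined by the pairings $[\check{\omega}]\cdot[F]$ and $[\check{\omega}]\cdot[C]$ with the fibre and the bad cycle, hypothesis $(2)$ forces them to agree for $\check{\omega}$ and $\phi^{*}\check{\omega}$. This verifies condition $(3)$, and applying Theorem~\ref{uniqueness thm} to $\check{\omega}_{1}=\check{\omega}$, $\check{\omega}_{2}=\phi^{*}\check{\omega}$ yields $\phi^{*}\check{\omega}=\check{\omega}$.

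Combining $\phi^{*}\check{\omega}=\check{\omega}$ with $\phi^{*}\check{\Omega}=\check{\Omega}$, the map $\phi$ preserves all three real K\"ahler forms $\check{\omega},\,\mbox{Re}\,\check{\Omega},\,\mbox{Im}\,\check{\Omega}$ of the hyperK\"ahler structure, hence preserves the metric and commutes with every complex structure in the twistor family, in particular with $J$. Therefore $\phi$ is a biholomorphism of $(X,J)$ preserving $\omega$ and $\Omega$. Finally, since $\phi$ preserves $(\omega,\Omega)$ and thus the asymptotic Calabi ansatz along the end of $X$---which compactifies uniquely by adding $D$, exactly as in the construction behind Theorem~\ref{CJL main thm}---I would extend $\phi$ holomorphically across $D$ to obtain the desired automorphism of the pair $(Y,D)$. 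Besides the asymptotic bookkeeping above, this last holomorphic extension across $D$ is the other place I expect to need genuine care.
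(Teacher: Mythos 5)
Your proposal takes essentially the same route as the paper, which presents this proposition precisely as a direct corollary of Theorem~\ref{uniqueness thm}: one applies that uniqueness statement to $\check{\omega}$ and $\phi^*\check{\omega}$ (whose hypotheses you verify from $\phi^*\check{\Omega}=\check{\Omega}$ and the preservation of the K\"ahler class) to force $\phi^*\check{\omega}=\check{\omega}$, and then preservation of the full hyperK\"ahler triple transports the rigidity back through \eqref{HK rel} to make $\phi$ holomorphic for $J$ and extendable over $D$. The two points you flag as delicate --- arranging the semi-flat model and its local section so that condition $(3)$ of Theorem~\ref{uniqueness thm} holds for both metrics simultaneously (the issue that Theorem~\ref{uniqueness modulo aut} is designed to absorb), and the removable-singularity extension across $D$ --- are exactly the details the survey leaves implicit, and your treatment of them is consistent with the paper's machinery.
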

  In particular, we recover a classical theorem in the case $Y=\mathbb{P}^2$.
  \begin{thm}
  	Given any smooth cubic $D$ in $\mathbb{P}^2$, there exists $\mathbb{Z}_3\oplus \mathbb{Z}_3\subseteq \mbox{Aut}(\mathbb{P}^2)$ preserving $D$. 
  \end{thm}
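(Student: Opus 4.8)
The plan is to deduce the statement from Proposition~\ref{HK sym}, which reduces the construction of automorphisms of the pair $(\mathbb{P}^2,D)$ to the construction of isomorphisms $\phi$ of the mirror rational elliptic surface pair $(\check{Y},\check{D})$ that $(i)$ preserve the meromorphic volume form $\check{\Omega}$ and $(ii)$ preserve the Kähler class on $\check{X}=\check{Y}\setminus\check{D}$ coming from the hyperK\"ahler rotation of the Tian-Yau metric. Here $Y=\mathbb{P}^2$ has degree $d=9$, so by Theorem~\ref{CJL main thm} the divisor $\check{D}$ is an $I_9$ fibre. Since a rational elliptic surface carrying an $I_9$ fibre has no complex moduli --- the root lattice $A_8$ of the fibre already saturates $E_8$, so $\mathrm{MW}$ has rank $0$ --- the surface $\check{Y}$ is forced to be the unique extremal surface with configuration $I_9I_1^3$, namely the compactification of the superpotential $W=x+y+\tfrac{1}{xy}$, and this holds independently of the chosen smooth cubic $D$. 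Thus the entire problem is transported onto a single fixed, highly symmetric surface.

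On this surface I would exhibit $\mathbb{Z}_3\oplus\mathbb{Z}_3$ as generated by two commuting order-three automorphisms of $(\check{Y},\check{D})$ of distinct geometric origin, each manifestly preserving $\check{\Omega}$ and $\check{D}$. The first is fibrewise translation $t$ by the Mordell-Weil torsion: for an $I_9$ fibre one has $\mathrm{MW}_{\mathrm{tor}}\cong\mathbb{Z}_3$, such a translation fixes the fibre $\check{D}$, and (as used repeatedly above) translation by a section leaves $\check{\Omega}$ unchanged. The second is the symmetry $\psi:(x,y)\mapsto(y,\tfrac{1}{xy})$ of the superpotential, which cyclically permutes the three monomials of $W$, hence has order three, fixes every fibre of $W$ (in particular $\check{D}$), and preserves $\check{\Omega}$ since in the $(\mathbb{C}^*)^2$-chart where $\check{\Omega}=\tfrac{dx}{x}\wedge\tfrac{dy}{y}$ one computes $\psi^*\bigl(\tfrac{dx}{x}\wedge\tfrac{dy}{y}\bigr)=\tfrac{dy}{y}\wedge\bigl(-\tfrac{dx}{x}-\tfrac{dy}{y}\bigr)=\tfrac{dx}{x}\wedge\tfrac{dy}{y}$. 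These two cyclic groups meet trivially ($t$ is fixed-point free on the generic fibre while $\psi$ is not), and they commute: conjugation by $\psi$ acts on $\mathrm{MW}_{\mathrm{tor}}\cong\mathbb{Z}_3$ through $\mathrm{Aut}(\mathbb{Z}_3)\cong\mathbb{Z}_2$, which has no element of order three, so $\psi t\psi^{-1}=t$. Hence $\langle t,\psi\rangle\cong\mathbb{Z}_3\oplus\mathbb{Z}_3$.

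It then remains to verify hypothesis $(ii)$ of Proposition~\ref{HK sym}, namely that $\langle t,\psi\rangle$ fixes the specific Kähler class $[\check{\omega}]$ attached to $D$, and this is the main obstacle. The translation factor is harmless: it lies in $\mathrm{Aut}_0(\check{X})$, acts trivially on $H^2(\check{X},\mathbb{R})$, and hence fixes every class. The factor $\psi$ acts nontrivially on $H^2$ --- it permutes the nine components of $I_9$ and the nine sections --- so one must check it fixes $[\check{\omega}]$ itself rather than merely preserving the Kähler cone. Here I would fix the \emph{exact} Tian-Yau metric, corresponding to the monotone symplectic structure on $\mathbb{P}^2$, so that the mirror Kähler class is the distinguished one; combining the invariance of $\check{\Omega}$ (which pins down the periods and the bad-cycle normalization) with the fact that for $d=9$ the Kähler classes admitting a special Lagrangian fibration form only a $(10-d)=1$-dimensional family, a finite-order symmetry preserving these data and the Kähler cone must fix the distinguished ray through $[\check{\omega}]$. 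Granting this, Proposition~\ref{HK sym} returns the same two underlying maps as automorphisms of $(\mathbb{P}^2,D)$; the passage through the hyperK\"ahler rotation is injective on the underlying space, so the image is again $\mathbb{Z}_3\oplus\mathbb{Z}_3\subseteq\mathrm{Aut}(\mathbb{P}^2)$ preserving $D$, recovering the classical Hesse symmetry by three-torsion translations.
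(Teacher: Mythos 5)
Your route is the one the paper intends: the theorem is stated as an immediate corollary of Proposition~\ref{HK sym}, with the actual construction of the $\mathbb{Z}_3\oplus\mathbb{Z}_3$ on the mirror side left unsaid, and your identification of the mirror as the extremal $I_9I_1^3$ surface together with the two order-three generators (Mordell--Weil torsion translation and the cyclic symmetry of $W=x+y+\tfrac{1}{xy}$) is a sensible way to fill that in. The group theory (normality of the translation subgroup, triviality of the conjugation action of an order-$3$ element on $\mathrm{Aut}(\mathbb{Z}_3)$, trivial intersection of the two cyclic factors) is fine.

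The one step that is not yet a proof is the verification of hypothesis $(ii)$ of Proposition~\ref{HK sym} for $\psi$. Your argument --- that the Kähler classes admitting special Lagrangian fibrations form a $1$-dimensional family which a finite-order symmetry must fix --- presupposes that $\psi$ preserves that family and, more importantly, that the family is a single ray rather than a union of rays permuted by $\psi$; neither is justified as written. A cleaner and complete argument is available in this degree: for $d=9$ one has $\dim_{\mathbb{R}}H^2(\check{X},\mathbb{R})=11-d=2$, and a class there is determined by its pairings with the fibre class $[F]$ and the bad cycle $[C]$ (this is exactly the normalization used in Theorem~\ref{generalized Hein's metric}). Since $\psi$ preserves the elliptic fibration it fixes $[F]$, and since it has order $3$ it acts trivially on $H^1(\check{D})\cong\mathbb{Z}$ and hence fixes $[C]$; therefore $\psi^*$ is the identity on all of $H^2(\check{X},\mathbb{R})$ and in particular fixes $[\check{\omega}]$, with no appeal to the special Lagrangian locus. (The same remark disposes of the translation factor, as you note.) With that repair, and the observation that the two periods $[\mathrm{Re}\,\Omega]$, $[\mathrm{Im}\,\Omega]$ are likewise preserved so that the hyperK\"ahler triple is genuinely $\psi$-invariant, Proposition~\ref{HK sym} applies and the argument closes. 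One further small point worth flagging: you should also check that the resulting subgroup of $\mathrm{Aut}(Y,D)$ lands in $\mathrm{Aut}(\mathbb{P}^2)=\mathrm{PGL}(3)$ rather than merely in the biholomorphisms of the pair; this is automatic since every biholomorphism of $\mathbb{P}^2$ is linear, but it deserves a sentence.
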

  \begin{rk}
  	 Generalizations of Proposition \ref{HK sym} to Looijenga pairs via mirror symmetry were proven in \cite{HK, KW}. 
  \end{rk}

\end{document}